\newtheorem{thm}{Theorem}[section]
\newtheorem{lem}[thm]{Lemma}
\newtheorem{cor}[thm]{Corollary}
\newtheorem{prop}[thm]{Proposition}
\def\square{\vbox{
      \hrule height 0.4pt
      \hbox{\vrule width 0.4pt height 5.5pt \kern 5.5pt \vrule width 0.4pt}
      \hrule height 0.4pt}}
\def\ch\mathrm{c h}
\newcommand{\VAP}{\mathrm{VAP}}
\let\la=\langle
\let\ra=\rangle
\numberwithin{equation}{section}
\begin{document}

\title[On virtual cabling]{On virtual cabling and  structure of $4$-strand virtual pure braid group}

\author[V. Bardakov]{Valeriy G. Bardakov}
\address{Sobolev Institute of Mathematics, Novosibirsk 630090, Russia,}
\address{Novosibirsk State University, Novosibirsk 630090, Russia,}
\address{Novosibirsk State Agrarian University, Dobrolyubova street, 160,  Novosibirsk 630039, Russia,}
\email{bardakov@math.nsc.ru}


\author{Jie Wu }
\address{Department of Mathematics, National University of Singapore, 10 Lower Kent Ridge Road, Singapore 119076} \email{matwuj@nus.edu.sg}
\urladdr{www.math.nus.edu.sg/\~{}matwujie}
\thanks{The main result is supported by the Russian Science Foundation grant N 16-11-10073.}

\begin{abstract}
This article is dedicate to cabling on virtual braids. This construction gives a new generating set for the virtual pure braid group $VP_n$. Consequently we describe $VP_4$ as HNN-extension. As an application to classical braids, we find a new presentation of the Artin pure braid group $P_4$ in terms of the cabled generators.

\end{abstract}
\subjclass[2010]{20F36, 55Q40, 18G30}
\keywords{homotopy group, virtual braid group, simplicial group, virtual cabling}

\maketitle


\section{Introduction} \label{int}

Cabling on classical braids has been used for establishing the fundamental connections between the homotopy groups and the theory of Brunnian braids~\cite{BCWW} as well as a relationship between associators (for quasi-triangular quasi-Hopf algebras) and (a variant of) the Grothendieck-Teichmuller group~\cite{Bar-Natan}. In the paper \cite{BM}, cabling for braids was used to study some properties of Burau representation. Similar operation (called \textit{naive cabling}) on framed links has been explored in~\cite{LLW} with obtaining simplicial groups arising from link groups.

The purpose of this article is to explore cabling for virtual braids. Along the ideas in~\cite{CW} on cabling for classical braids, one gets cabling operation for virtual pure braid group $VP_n$ that gives new generators for $VP_n$. More precisely, for $n\geq 3$, the group $VP_n$ is generated by the $n$-strand virtual braids obtained by taking $(k,l)$-cabling on the standard generators $\lambda_{1,2}$ and $\lambda_{2,1}$ of $VP_2$ together with adding trivial strands $n-k-l$ to the end for $1\leq k\leq n-1$ and $2\leq k+l\leq n$, where a $(k,l)$-cabling on a $2$-strand virtual braid means to take $k$-cabling on the first strand and $l$-cabling on the second strand.

Different from the classical situation~\cite{CW} that the $n$-strand braids cabled from the standard generator $A_{1,2}$ for $P_2$ generates a free group of rank $n-1$, the subgroup of $VP_n$ generated by $n$-strand virtual braids cabled from $\lambda_{1,2}$ and $\lambda_{2,1}$, which is denoted by $T_{n-1}$, is no longer free for $n\geq3$.
For the first nontrivial case that $n=3$, a presentation of $T_2$ has been explored with producing a decomposition theorem for $VP_3$ using cabled generators~\cite{BMVW}.

Our main work in this article is to introduce a new generating set for $VP_n$, define a simplicial group $T_*$ and extend the results on $VP_3$ in~\cite{BMVW} to $VP_4$. The main result is Theorem~\ref{t4.11} that describe $VP_4$ as HNN-extension. As a consequence, we get a presentation for the group $T_3$ in Theorem~\ref{T_3}. In the next article \cite{BMW} we prove the lifting theorem for the virtual braids. From this theorem follows that if we know the structure of $VP_4$, $T_3$ or $P_4$, then using degeneracy maps we can find the structure of $VP_n$, $T_n$ or $P_n$ for all bigger $n$.

The article is organized as follows. In Section \ref{virt}, we give a review on braid groups and virtual braid groups. The simplicial structure on virtual pure braid groups will be discussed in Section~\ref{simplicial}. In Section ~\ref{s41}, we discuss the cabling operation on classical pure braid group $P_n$ as subgroup $VP_n$.  In particular, we give a new presentation of the Artin pure braid group $P_4$ in terms of the cabled generators in Proposition~\ref{prop4.1}. We explore the structures of $VP_3$ and $VP_4$ in Section~\ref{s4}.

\subsection{Acknowledgements}
This article was written when the first author visited  College of Mathematics and information Science Hebei Normal University. He thanks the administration for good working conditions.  The authors would like to thank Roman Mikhailov for interesting ideas and useful discussion and  Yu. Mikhal'chishina, who made the picture.

\section{Braid and virtual braid groups} \label{virt}

\subsection{Braid group} The braid group $B_n$ on $n$ strings  is generated by $\sigma_1,\,  \sigma_2, \, \ldots , \, \sigma_{n-1}$ and is defined by relations
\begin{align*}
& \sigma_i \sigma_{i+1} \sigma_i = \sigma_{i+1} \sigma_i \sigma_{i+1},\\
& \sigma_i \sigma_{j} = \sigma_j \sigma_{i},~~|i-j|>1.
\end{align*}

Let $S_n$, $n \geq 1$ be the symmetric group which is generated by $\rho_1, \, \rho_2, \, \ldots , \, \rho_{n-1}$ and is defined by relations
\begin{align*}
& \rho_i^2 = 1,~~~i = 1, 2, \ldots, n-1,\\
& \rho_i \rho_{i+1} \rho_i = \rho_{i+1} \rho_i \rho_{i+1},~~~i = 1, 2, \ldots, n-2,\\
& \rho_i \rho_{j} = \rho_j \rho_{i},~~|i-j|>1.
\end{align*}

There is a homomorphism $B_n \to S_n$, which sends $\sigma_i$ to $\rho_i$. Its kernel is the pure braid group $P_n$. This group is generated by elements $A_{i,j}$, $1 \leq i < j \leq n$, where
$$
A_{i,i+1} = \sigma_i^2,
$$
$$
A_{i,j} = \sigma_{j-1} \sigma_{j-2} \ldots \sigma_{i+1} \sigma_i^2 \sigma_{i+1}^{-1} \ldots \sigma_{j-2}^{-1} \sigma_{j-1}^{-1},~~~i+1 < j \leq n,
$$
and is defined by relations (where $\varepsilon = \pm 1$):
\begin{align*}
& A_{ik}^{-\varepsilon} A_{kj} A_{ik}^{\varepsilon} = (A_{ij} A_{kj})^{\varepsilon} A_{kj} (A_{ij} A_{kj})^{-\varepsilon},\\
& A_{km}^{-\varepsilon} A_{kj} A_{km}^{\varepsilon} = (A_{kj} A_{mj})^{\varepsilon} A_{kj} (A_{kj} A_{mj})^{-\varepsilon},~~m < j, \\
& A_{im}^{-\varepsilon} A_{kj} A_{im}^{\varepsilon} = [A_{ij}^{-\varepsilon}, A_{mj}^{-\varepsilon}]^{\varepsilon} A_{kj} [A_{ij}^{-\varepsilon}, A_{mj}^{-\varepsilon}]^{-\varepsilon}, ~~i < k < m,\\
& A_{im}^{-\varepsilon} A_{kj} A_{im}^{\varepsilon} = A_{kj}, ~~k < i, m < j~\mbox{or}~ m < k,
\end{align*}
Here and further  $[a,b] = a^{-1} b^{-1} a b$ is the commutator of $a$ and $b$, $a^b = b^{-1} a b$ is the conjugation of $a$ by $b$.

There is an epimorphism of $P_n$ to $P_{n-1}$ what is removing of the $n$-th string. Its kernel $U_n = \langle A_{1n}, A_{2n}, \ldots, A_{n-1,n} \rangle$ is a free group of rank $n-1$ and $P_n = U_n \leftthreetimes P_{n-1}$ is a semi-direct product of $U_n$ and $P_{n-1}$. Hence,
$$
P_n = U_n \leftthreetimes (U_{n-1} \leftthreetimes (\ldots \leftthreetimes (U_3 \leftthreetimes U_2)) \ldots ),
$$
is a semi-direct product of free groups and $U_2 = \langle A_{12}\rangle$ is the infinite cyclic  group.

\subsection{Virtual braid group} \label{virt}

The virtual braid group $VB_n$ is generated by elements
$$
\sigma_1,\,  \sigma_2, \, \ldots , \, \sigma_{n-1}, \, \rho_1, \, \rho_2, \, \ldots , \, \rho_{n-1},
$$
where $\sigma_1,\,  \sigma_2, \, \ldots , \, \sigma_{n-1}$ generate the classical braid group $B_n$ and
the elements $\rho_1$,  $\rho_2$,  $\ldots $,  $\rho_{n-1}$ generate the symmetric group
$S_n$. Hence, $VB_n$ is defined by relations of $B_n$, relations of $S_n$
and mixed relations:
$$
\sigma_i \rho_j = \rho_j \sigma_i,~~~|i-j| > 1,
$$
$$
\rho_i \rho_{i+1} \sigma_i = \sigma_{i+1} \rho_i \rho_{i+1}.
$$

As for the classical braid groups there exists the canonical
epimorphism of $VP_n$ onto the symmetric group $VB_n\to S_n$ with the
kernel called the {\it virtual pure  braid group} $VP_n$. So we have a
short exact sequence
\begin{equation*}
1 \to VP_n \to VB_n \to S_n \to 1.
\end{equation*}
Define the following elements in $VP_n$:
$$
\lambda_{i,i+1} = \rho_i \, \sigma_i^{-1},~~~
\lambda_{i+1,i} = \rho_i \, \lambda_{i,i+1} \, \rho_i = \sigma_i^{-1} \, \rho_i,
~~~i=1, 2, \ldots, n-1,
$$
$$
\lambda_{ij} = \rho_{j-1} \, \rho_{j-2} \ldots \rho_{i+1} \, \lambda_{i,i+1} \, \rho_{i+1}
\ldots \rho_{j-2} \, \rho_{j-1},
$$
$$
\lambda_{ji} = \rho_{j-1} \, \rho_{j-2} \ldots \rho_{i+1} \, \lambda_{i+1,i} \, \rho_{i+1}
\ldots \rho_{j-2} \, \rho_{j-1}, ~~~1 \leq i < j-1 \leq n-1.
$$
It is shown in \cite{B} that the group $VP_n\ (n\geq 2)$ admits a
presentation with the  generators $\lambda_{ij},\ 1\leq i\neq j\leq n,$
and the following relations:
\begin{align}
& \lambda_{ij}\lambda_{kl}=\lambda_{kl}\lambda_{ij} \label{rel},\\
&
\lambda_{ki}\lambda_{kj}\lambda_{ij}=\lambda_{ij}\lambda_{kj}\lambda_{ki}
\label{relation},
\end{align}
where distinct letters stand for distinct indices.

Like the classical pure braid groups, groups $VP_n$ admit a
semi-direct product decompositions \cite{B}: for $n\geq 2,$ the
$n$-th virtual pure braid group can be decomposed as
\begin{equation}
VP_n=V_{n-1}^*\rtimes VP_{n-1},~~n \geq 2,
\label{eq:s_d_dec}
\end{equation}
where $V_{n-1}^*$ is a  subgroup of $VP_{n}$, $V_1^* = F_2$, $VP_1$ is supposed
to be the trivial group.

\section{Simplicial  groups $VP_*$ and $T_*$}\label{simplicial}
\subsection{Simplicial sets and simplicial groups} Recall the definition of simplicial groups (see \cite[p.~300]{MP} or \cite{BCWW}). A sequence of sets $\mathcal{X} = \{ X_n \}_{n \geq 0}$  is called a
{\it simplicial set} if there are face maps:
$$
d_i : X_n \longrightarrow X_{n-1} ~\mbox{for}~0 \leq i \leq n
$$
and degeneracy  maps
$$
s_i : X_n \longrightarrow X_{n+1} ~\mbox{for}~0 \leq i \leq n,
$$
that are  satisfy the following simplicial identities:
\begin{enumerate}
\item $d_i d_j = d_{j-1} d_i$ if $i < j$,
\item $s_i s_j = s_{j+1} s_i$ if $i \leq j$,
\item $d_i s_j = s_{j-1} d_i$ if $i < j$,
\item $d_j s_j = id = d_{j+1} s_j$,
\item $d_i s_j = s_{j} d_{i-1}$ if $i > j+1$.
\end{enumerate}

\subsection{The cablings of virtual pure braid groups}
By using the same ideas in the work~\cite{BCWW,CW} on the classical braids, we have a simplcial group
$$
\VAP_* :\ \ \ \ldots\ \begin{matrix}\longrightarrow\\[-3.5mm] \ldots\\[-2.5mm]\longrightarrow\\[-3.5mm]
\longleftarrow\\[-3.5mm]\ldots\\[-2.5mm]\longleftarrow \end{matrix}\ VP_4 \ \begin{matrix}\longrightarrow\\[-3.5mm]\longrightarrow\\[-3.5mm]\longrightarrow\\[-3.5mm]\longrightarrow\\[-3.5mm]\longleftarrow\\[-3.5mm]
\longleftarrow\\[-3.5mm]\longleftarrow
\end{matrix}\ VP_3\ \begin{matrix}\longrightarrow\\[-3.5mm] \longrightarrow\\[-3.5mm]\longrightarrow\\[-3.5mm]
\longleftarrow\\[-3.5mm]\longleftarrow \end{matrix}\ VP_2\ \begin{matrix} \longrightarrow\\[-3.5mm]\longrightarrow\\[-3.5mm]
\longleftarrow \end{matrix}\ VP_1$$
on pure virtual braid groups with $\VAP_n=VP_{n+1}$, the face homomorphism
$$
d_i : \VAP_n=VP_{n+1} \longrightarrow \VAP_{n-1}=VP_n
$$
given by deleting $(i+1)$th strand for $0\leq i\leq n$, and the degeneracy homomorphism
$$
s_i : \VAP_n=VP_{n+1} \longrightarrow \VAP_{n+1}=VP_{n+2}
$$
given by doubling the $(i+1)$th strand for $0\leq i\leq n$.

The idea of cabling  is obtained from the geometric description, which can be regarded as the formal definition. See the  Figure 1.


\begin{figure}[h]
\noindent\centering{\includegraphics[height=0.3\textwidth]{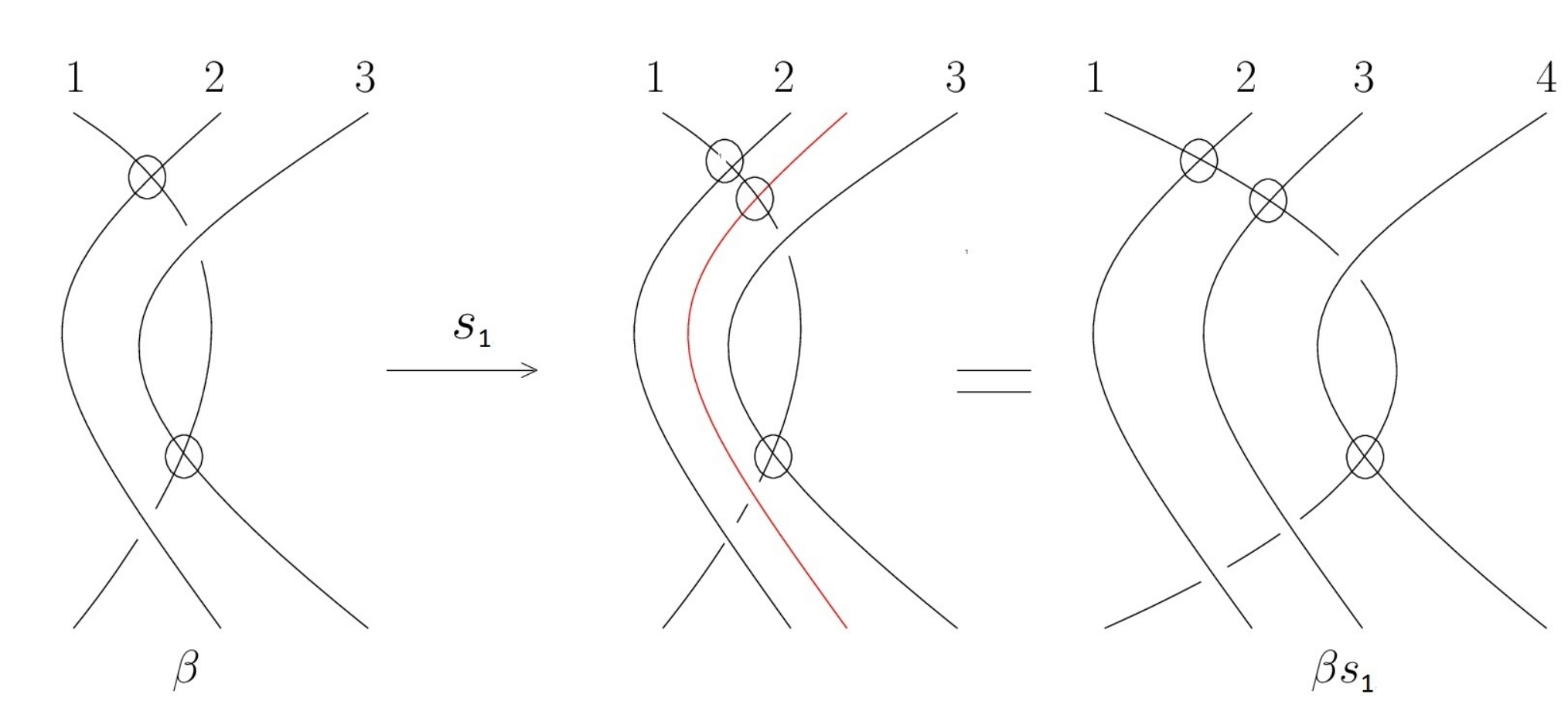}}
\caption{Degeneracy map $s_1$ }
\end{figure}

The proof of the following proposition is straightforward.

\begin{prop} \label{p3.1}
The sequence of groups $\VAP_*$ with $\VAP_n=VP_{n+1}$ for $n\geq0$ is a simplicial group under the faces
$d_i : \VAP_{n-1}=VP_{n} \longrightarrow \VAP_{n-2}=VP_{n-1}$, $0\leq i\leq n-1$, and
 degeneracies $s_i : \VAP_{n-1}=VP_{n} \longrightarrow \VAP_{n}=VP_{n+1}$, $0\leq i\leq n-1$, given the group homomorphism with acting on the generators $\lambda_{k,l}$ and $\lambda_{l,k}$, $1 \leq k < l \leq n$, of $VP_{n}$ by the rules
$$
s_i (\lambda_{k,l}) = \left\{
\begin{array}{lcl}
\lambda_{k+1,l+1} & \textrm{ for }&i < k-1,\\
\lambda_{k,l+1} \lambda_{k+1,l+1} & \textrm{ for } &i = k-1, \\
\lambda_{k,l+1} & \textrm{ for } &k-1 < i < l-1,\\
& \\
\lambda_{k,l+1} \, \lambda_{k,l} & \textrm{ for }& i = l-1, \\
& \\
\lambda_{k,l} & \textrm{ for } &i > l-1,
\end{array}
\right.
$$
$$
s_i (\lambda_{l,k}) = \left\{
\begin{array}{lcl}
\lambda_{l+1,k+1} &\textrm{ for }&i < k-1,\\
\lambda_{l+1,k+1} \lambda_{l+1,k} &\textrm{ for }&i = k-1, \\
\lambda_{l+1,k} & \textrm{ for }&k-1 < i < l-1,\\
& \\
\lambda_{l,k} \, \lambda_{l+1,k}   & \textrm{ for }&i = l-1, \\
& \\
\lambda_{l,k} & \textrm{ for }&i > l-1,
\end{array}
\right.
$$
$$
d_i (\lambda_{k,l}) = \left\{
\begin{array}{lcl}
\lambda_{k-1,l-1} & \textrm{ for } &0 \leq i < k-1, \\
1 & \textrm{ for } & i =k-1,\\
\lambda_{k,l-1} & \textrm{ for } &k-1 < i < l-1, \\
1 & \textrm{ for } & i =l,\\
\lambda_{k,l} & \textrm{ for } &l-1 < i \leq n-1, \\
\end{array}
\right.
$$
$$
d_i (\lambda_{l,k}) = \left\{
\begin{array}{lcl}
\lambda_{k-1,l-1} & \textrm{ for } &0 \leq i < k-1, \\
1 & \textrm{ for } & i = k-1,\\
\lambda_{l-1,k} & \textrm{ for } &k-1 < i < l-1, \\
1 & \textrm{ for } & i = l-1,\\
\lambda_{l,k} & \textrm{ for } &l-1 < i \leq n-1. \\
\end{array}
\right.
$$
 \hfill $\Box$
\end{prop}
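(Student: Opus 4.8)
The plan is to verify two things: first, that each $s_i$ and each $d_i$ is a well-defined group homomorphism, that is, respects the defining relations \eqref{rel} and \eqref{relation} of $VP_n$; and second, that the family $\{d_i, s_i\}$ satisfies the five simplicial identities listed in the definition. Both checks are carried out on the generating set $\{\lambda_{k,l}, \lambda_{l,k} : 1 \leq k < l \leq n\}$.

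For the homomorphism property I would lean on the geometric meaning behind the formulas: $d_i$ deletes the $(i+1)$st strand and $s_i$ doubles it, so each operation carries a virtual pure braid to a virtual pure braid and is therefore automatically multiplicative, with the displayed formulas being nothing but the images of the generators read off from the geometry (Figure~1). To confirm this algebraically I would substitute the formulas into each relation. A commuting relation $\lambda_{ij}\lambda_{kl} = \lambda_{kl}\lambda_{ij}$ on four distinct indices is sent either to another commuting relation between generators (or products of generators) that still carry four distinct indices, or it collapses when some index is killed by a $d_i$; both cases hold in the target by \eqref{rel}. The triangle relation \eqref{relation} is handled the same way, its image being either an instance of \eqref{relation} on the shifted indices or a consequence of \eqref{rel} after a strand is deleted.

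For the simplicial identities I would fix a generator $\lambda_{k,l}$ (the computation for $\lambda_{l,k}$ is entirely analogous) and evaluate each side of each identity, partitioning the ambient index $i$ (and $j$) into the ranges appearing in the piecewise formulas. The essential content is bookkeeping: after the inner map is applied the indices of the output generator(s) are shifted by $\pm 1$, so the outer map must be evaluated against those shifted indices, and one checks that the range inequalities governing the two sides agree. Identities (1) and (4), involving only faces or an immediate face--degeneracy cancellation, are quickest; the degeneracy identity (2) and the mixed identities (3) and (5) require the full case split.

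The hard part will be the boundary values $i = k-1$ and $i = l-1$, where a degeneracy sends a single generator to a product such as $\lambda_{k,l+1}\lambda_{k+1,l+1}$. Pushing a second map across such a product forces use of the homomorphism property established in the first step, and one must then confirm that the resulting two-letter (or, after a further degeneracy, three-letter) word coincides with the correspondingly expanded other side of the identity, even though the controlling inequalities on the two sides differ by the index shifts. Once these boundary cases are tabulated correctly, every remaining subcase is a direct substitution.
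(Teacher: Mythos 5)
Your proposal is correct and matches the paper's treatment: the paper offers no written proof, declaring the result straightforward on the strength of the geometric definition of the face and degeneracy maps (deleting and doubling strands), which is precisely the backbone of your argument. Your outlined verification --- checking the homomorphism property against relations \eqref{rel} and \eqref{relation} and then confirming the five simplicial identities generator by generator, with special care at the boundary cases $i=k-1$, $i=l-1$ --- is exactly the routine computation the paper leaves to the reader.
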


Let $T_*$ be the smallest simplicial subgroup of $\VAP_*$ with the $1$-simplex group $T_1=\VAP_1=VP_2$. It is routine to see that the group $T_n$ as a subgroup of $VP_{n+1}$ can be constructed recursively as follows:
\begin{enumerate}
\item[] $T_0=\{1\}$, $T_1=VP_2$, and $T_{n+1} = \langle s_0(T_n), s_1(T_n), \ldots,
s_{n}(T_n) \rangle. $
\end{enumerate}

Let
\begin{equation}\label{a_{kl}}
\begin{array}{rcl}
a_{k,n+1-k}
&=&s_{n-1}s_{n-2}\cdots s_{k}\hat{s}_{k-1}s_{k-2}\cdots s_0\lambda_{1,2},\\
\end{array}
\end{equation}
\begin{equation}\label{b_{kl}}
\begin{array}{rcl}
b_{k,n+1-k}
&=&s_{n-1}s_{n-2}\cdots s_{k}\hat{s}_{k-1}s_{k-2}\cdots s_0\lambda_{2,1}\\
\end{array}
\end{equation}
be the elements in $VP_{n+1}$ for $1\leq k\leq n$.
By direct computations,  we have the following formulae.
\begin{equation}\label{equation3.1}
a_{n-k,k} = \left\{
\begin{array}{lr}
\lambda_{1n} \lambda_{2n} \ldots \lambda_{n-1,n} & for  ~k = 1,\\
\lambda_{1n} \lambda_{2n} \ldots \lambda_{n-k,n} a_{n-k,k-1} & for ~1 < k < n, \\
\lambda_{1n} a_{1,n-2} & for  ~k = n-1,
\end{array}
\right.
\end{equation}
\begin{equation}\label{equation3.2}
b_{n-k,k} = \left\{
\begin{array}{lr}
\lambda_{n,n-1} \lambda_{n,n-2} \ldots \lambda_{n1} & for  ~k = 1,\\
 b_{n-k,k-1} \lambda_{n,n-k} \lambda_{n,n-k-1} \ldots \lambda_{n1} & for ~1 < k < n, \\
b_{1,n-2}  \lambda_{n1} & for  ~k = n-1,
\end{array}
\right.
\end{equation}
Moreover the generators $\lambda_{ij}$ can be written in terms of $a_{k,l}$ and $b_{s,t}$ as follows:
\begin{equation}\label{equation3.3}
\lambda_{kn} = \left\{
\begin{array}{lr}
a_{1,n-1} \, a_{1,n-2}^{-1} & for  ~k = 1,\\
a_{k-1,n-k} \, a_{k-1,n-k+1}^{-1} \, a_{k,n-k} \, a_{k,n-k-1}^{-1} & for ~1 < k < n, \\
a_{n-2,1} \, a_{n-2,2}^{-1} \, a_{n-1,1} & for  ~k = n-1,
\end{array}
\right.
\end{equation}
\begin{equation}\label{equation3.4}
\lambda_{nk} = \left\{
\begin{array}{lr}
b_{1,n-2}^{-1} \, b_{1,n-1} & for  ~k = 1,\\
b_{k,n-k-1}^{-1} \, b_{k,n-k} \, b_{k-1,n-k+1}^{-1} \, b_{k-1,n-k} & for ~1 < k < n, \\
b_{n-1,1} \, b_{n-2,2}^{-1} \, b_{n-2,1} & for  ~k = n-1,
\end{array}
\right.
\end{equation}

From the above formulae, we have the following proposition.

\begin{prop} \label{p3.3}
Consider $VP_k$ as a subgroup of $VP_{k+1}$ by adding a trivial strand in the end. Then
\begin{enumerate}
\item  The subgroup $T_{n-1}$ of $VP_n$, $n \geq 3$, is generated by elements $a_{k,l}$, $b_{k,l}$, $k+l = n$.
\item The group $VP_n=\la T_1, T_2,\ldots, T_{n-1}\ra$ generated by $a_{k,l}$ and $b_{k,l}$ for $2\leq k+l\leq n, 1\leq k,l\leq n-1$.
\item $VP_{n+1}=\la VP_n, s_0VP_n, s_1 VP_n,\ldots, s_{n-1}VP_n \ra$ for $n\geq 2$.\hfill $\Box$
\end{enumerate}
\end{prop}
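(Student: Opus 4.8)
The plan is to establish the three parts in order, deriving~(2) from~(1) together with the given formulae, and deducing~(3) formally from~(2).

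For part~(1), I start from the recursion $T_{n-1}=\la s_0(T_{n-2}),\ldots,s_{n-2}(T_{n-2})\ra$. Since each $s_i$ is a homomorphism, the image $s_i(G)$ of a subgroup is generated by the images of the generators of $G$, so an immediate induction (base $T_1=\la\lambda_{1,2},\lambda_{2,1}\ra$) shows that $T_{n-1}$ is generated by all elements $s_{i_1}s_{i_2}\cdots s_{i_{n-2}}\lambda$ with $\lambda\in\{\lambda_{1,2},\lambda_{2,1}\}$. I then put each such degeneracy word into normal form using the simplicial identity $s_is_j=s_{j+1}s_i$ for $i\le j$; applying it repeatedly (note in particular $s_is_i=s_{i+1}s_i$) rewrites any length-$(n-2)$ word as $s_{j_1}\cdots s_{j_{n-2}}$ with $j_1>\cdots>j_{n-2}\ge 0$ and $j_1\le n-2$. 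A strictly decreasing sequence of length $n-2$ inside $\{0,1,\ldots,n-2\}$ is precisely the complement of a single omitted index $k-1$, and for that sequence the defining expression of $a_{k,n-k}$ (respectively $b_{k,n-k}$) is exactly $s_{j_1}\cdots s_{j_{n-2}}\lambda_{1,2}$ (respectively applied to $\lambda_{2,1}$). Hence $T_{n-1}=\la a_{k,l},b_{k,l}:k+l=n\ra$.

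For part~(2), I induct on $n$, the case $n=2$ being $VP_2=\la\lambda_{1,2},\lambda_{2,1}\ra=T_1$. In the inductive step I split the standard generators of $VP_n$ into the $\lambda_{ij}$ with $i,j\le n-1$ and the last-strand generators $\lambda_{kn},\lambda_{nk}$, $1\le k\le n-1$. The former lie in the copy of $VP_{n-1}$ obtained by deleting the last strand, which equals $\la T_1,\ldots,T_{n-2}\ra$ by the inductive hypothesis. The latter are expressed by~\eqref{equation3.3} and~\eqref{equation3.4} as words in the $a_{k',l'},b_{k',l'}$ with $k'+l'\in\{n-1,n\}$: the terms of sum $n$ lie in $T_{n-1}$ by part~(1), while the terms of sum $n-1$ involve no last strand and hence lie in the copy of $VP_{n-1}=\la T_1,\ldots,T_{n-2}\ra$. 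Thus every generator of $VP_n$ lies in $\la T_1,\ldots,T_{n-1}\ra$, and the reverse inclusion is clear since $T_m\subseteq VP_{m+1}\subseteq VP_n$. The explicit generating set $\{a_{k,l},b_{k,l}:2\le k+l\le n\}$ now follows by combining this with part~(1).

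Part~(3) is then formal. Applying part~(2) at level $n+1$ gives $VP_{n+1}=\la T_1,\ldots,T_n\ra$, and part~(2) at level $n$ gives $\la T_1,\ldots,T_{n-1}\ra=VP_n$, so $VP_{n+1}=\la VP_n,T_n\ra$. Because $T_{n-1}\subseteq VP_n$, the recursion yields $T_n=\la s_0(T_{n-1}),\ldots,s_{n-1}(T_{n-1})\ra\subseteq\la s_0VP_n,\ldots,s_{n-1}VP_n\ra$, whence $VP_{n+1}=\la VP_n,s_0VP_n,\ldots,s_{n-1}VP_n\ra$, as claimed. The main obstacle is the bookkeeping in part~(1): one must verify that the normal form of every admissible length-$(n-2)$ degeneracy word is a \emph{strictly} decreasing sequence with top index at most $n-2$, and that the resulting bijection between such sequences and the omitted index $k-1$ matches the indexing in the definitions of $a_{k,l}$ and $b_{k,l}$ exactly. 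Once this combinatorial identification is nailed down, parts~(2) and~(3) follow purely formally from the formulae~\eqref{equation3.3},~\eqref{equation3.4} and the simplicial identities.
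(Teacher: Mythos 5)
Your proposal is correct and takes essentially the same route as the paper, which states the proposition as an immediate consequence of the formulae \eqref{equation3.1}--\eqref{equation3.4} and omits the details. Your argument is precisely the fleshing-out of that: the simplicial normal form $s_{j_1}\cdots s_{j_{n-2}}$ with $j_1>\cdots>j_{n-2}\ge 0$ identifies the generating degeneracy words of $T_{n-1}$ with the $a_{k,l},b_{k,l}$, $k+l=n$, and then \eqref{equation3.3}--\eqref{equation3.4} together with induction on $n$ give parts (2) and (3) exactly as the paper intends.
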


Let $c_{ij} = b_{ij} a_{ij}$. Put
$$
T_i^c = \langle c_{ij}~|~i+j = n-1  \rangle,~~i = 1, 2, \ldots, n-1.
$$
Notice that
$$
c_{1,1}=b_{1,1}a_{1,1}=\lambda_{2,1}\lambda_{1,2}=\sigma_1^{-1}\rho_1\rho_1\sigma_1^{-1}=\sigma_1^{-2}
$$
is a generator for $P_2$ as a subgroup of $VP_2$. The cabled braid $c_{i,j}$ lies in $P_{i+j+1}\leq VP_{i+j+1}$. It is straightforward to see that the following proposition holds for classical braids.

\begin{prop}\label{proposition3.3}
Consider $P_k$ as a subgroup of $P_{k+1}$ by adding a trivial strand in the end. Then
\begin{enumerate}
\item  The subgroup $T^c_{n-1}$ of $P_n$, $n \geq 3$, is generated by elements $c_{k,l}$, $k+l = n$.
\item The group $P_n=\la T^c_1, T^c_2,\ldots, T^c_{n-1}\ra$ generated by $c_{k,l}$ for $2\leq k+l\leq n, 1\leq k,l\leq n-1$.
\item $P_{n+1}=\la P_n, s_0P_n, s_1 P_n,\ldots, s_{n-1}P_n \ra$ for $n\geq 2$.\hfill $\Box$
\end{enumerate}
\end{prop}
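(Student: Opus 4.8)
The plan is to transport the proof of Proposition~\ref{p3.3} to the classical subgroups, using two observations. First, the simplicial operations preserve classicality: doubling a strand of a classical pure braid (the degeneracy $s_i$) or deleting one (the face $d_i$) again yields a classical pure braid, so the groups $P_{n+1}\leq \VAP_n=VP_{n+1}$ form a simplicial subgroup of $\VAP_*$. Second, since each $s_i$ is a homomorphism, the cabled element $c_{k,l}=b_{k,l}a_{k,l}$ is itself a cabling of $c_{1,1}$: writing $s_I$ for the common composite $s_{n-1}\cdots\hat{s}_{k-1}\cdots s_0$ appearing in the definitions of $a_{k,n+1-k}$ and $b_{k,n+1-k}$, we get $c_{k,l}=s_I(\lambda_{2,1})\,s_I(\lambda_{1,2})=s_I(\lambda_{2,1}\lambda_{1,2})=s_I(c_{1,1})$. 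As $c_{1,1}=\sigma_1^{-2}=A_{1,2}^{-1}$ generates $P_2$, this confirms $c_{k,l}\in P_{k+l+1}$ and identifies $T^c_*$ as the smallest simplicial subgroup of $P_*$ with $T^c_1=P_2$, exactly parallel to $T_*$ and $VP_2$.

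Statement (1) is then immediate from the recursive description. The classical analogue of $T_{n}=\langle s_0T_{n-1},\ldots,s_{n-1}T_{n-1}\rangle$ reads $T^c_{n}=\langle s_0T^c_{n-1},\ldots,s_{n-1}T^c_{n-1}\rangle$, and unwinding this tower from $T^c_1=\langle c_{1,1}\rangle$ shows, by the same degeneracy combinatorics as in the virtual case, that $T^c_{n-1}$ is generated exactly by the cablings $c_{k,l}$ of $c_{1,1}$ with $k+l=n$.

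For statements (2) and (3) I would induct on $n$ using the decomposition $P_n=U_n\leftthreetimes P_{n-1}$, with $U_n=\langle A_{1n},\ldots,A_{n-1,n}\rangle$ free. Granting the inductive hypothesis $P_{n-1}=\langle T^c_1,\ldots,T^c_{n-2}\rangle$, statement (2) reduces to showing that the free generators $A_{1n},\ldots,A_{n-1,n}$ lie in the subgroup generated by the $c_{k,l}$ with $k+l\le n$; statement (3) reduces, via $P_{n+1}=U_{n+1}\leftthreetimes P_n$ and $P_n\subseteq\langle P_n,s_0P_n,\ldots,s_{n-1}P_n\rangle$, to placing $A_{1,n+1},\ldots,A_{n,n+1}$ in the right-hand side. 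For the latter I would factor each $c_{k,l}$ with $k+l=n+1$ as $s_i$ applied to a $c_{k',l'}$ with $k'+l'=n$, which lies in $T^c_{n-1}\subseteq P_n$, so that $c_{k,l}\in s_iP_n$; the desired $A_{k,n+1}$ are then words in these together with lower $c$'s sitting in $P_n$. It is reassuring that the number of generators $c_{k,l}$ with $k+l\le n$ equals $\binom{n}{2}$, matching the rank of the standard generating set $\{A_{ij}\}$ of $P_n$.

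The main obstacle, and the only point demanding genuine computation rather than bookkeeping, is precisely this passage from the virtual to the classical generating set. In $VP_n$ the cablings $a_{k,l}$ and $b_{k,l}$ are available separately, and the formulae (\ref{equation3.3}) and (\ref{equation3.4}) express $\lambda_{kn}$ through the $a$'s alone and $\lambda_{nk}$ through the $b$'s alone; but inside $P_n$ only the products $c_{k,l}=b_{k,l}a_{k,l}$ are classical. I therefore expect the crux to be checking that the classical generator $A_{k,n}$, which arises as a product of the shape $\lambda_{nk}\lambda_{kn}$ (as in the base case $A_{1,2}^{-1}=c_{1,1}=\lambda_{2,1}\lambda_{1,2}$), reorganizes so that the separate $a$- and $b$-factors always pair up into $c$'s. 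Once the resulting classical analogue of (\ref{equation3.3})--(\ref{equation3.4}), expressing each $A_{k,n}$ as a word in the $c_{k,l}$, is written down, statements (2) and (3) close by the same semidirect-product argument used for Proposition~\ref{p3.3}.
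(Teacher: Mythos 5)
Your proposal is correct and takes essentially the same approach as the paper, which states Proposition~\ref{proposition3.3} without proof as the direct classical analogue of Proposition~\ref{p3.3}: your observations that the faces and degeneracies preserve classicality, that each $c_{k,l}$ is an iterated degeneracy of $c_{1,1}=A_{1,2}^{-1}$, and that the decomposition $P_n=U_n\leftthreetimes P_{n-1}$ reduces everything to expressing the $A_{k,n}$ in the cabled generators are precisely what the paper's ``straightforward'' assertion rests on. The single computation you defer --- the classical analogue of (\ref{equation3.3})--(\ref{equation3.4}) writing each $A_{k,n}$ as a word in the $c_{k,l}$ --- is also the only substantive input, and the paper itself carries it out only for $n\leq 4$ (in the proof of Proposition~\ref{prop4.1}), so your plan is, if anything, more explicit than the paper's treatment.
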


\section{Cabling of the classical pure braid group} \label{s41}

In the present section we find a set of defining relations of $P_4$ in the generators $c_{ij}$ in Proposition~\ref{proposition3.3}.

\begin{prop}\label{prop4.1} The group $P_4$ is generated by elements
$$
c_{11},~~ c_{21},~~ c_{12},~~ c_{31},~~c_{22},~~ c_{13}
$$
and is defined by relations (where $\varepsilon = \pm 1$):
$$
c_{21}^{c_{11}^{\varepsilon}} = c_{21},~~~c_{12}^{c_{11}^{\varepsilon}} = c_{12}^{c_{21}^{-\varepsilon}},~~~c_{31}^{c_{11}^{\varepsilon}} = c_{31},~~~c_{22}^{c_{11}^{\varepsilon}} = c_{22},~~~c_{13}^{c_{11}^{\varepsilon}} = c_{13}^{c_{22}^{-\varepsilon}},
$$
$$
c_{31}^{c_{21}^{\varepsilon}} = c_{31},~~~c_{22}^{c_{21}^{\varepsilon}} = c_{22}^{c_{31}^{-\varepsilon}},~~~c_{13}^{c_{21}^{\varepsilon}} = c_{13}^{c_{22}^{\varepsilon} c_{31}^{-\varepsilon}},
$$
$$
c_{31}^{c_{12}^{\varepsilon}} = c_{31},~~~c_{13}^{c_{12}^{\varepsilon}} = c_{13}^{c_{31}^{-\varepsilon}}.
$$
$$
c_{22}^{c_{12}^{-1}} = [c_{31}, c_{13}^{-1}] \, [c_{13}^{-1}, c_{22}] \, c_{22} \, [c_{21}^2, c_{12}^{-1}] = c_{13}^{c_{31}} c_{13}^{-c_{22}}  c_{22} [c_{21}^2, c_{12}^{-1}],
$$
$$
c_{22}^{c_{12}} = [c_{12}, c_{21}^{-2}] \, c_{22} \, [c_{22}^{-3}, c_{13}]  \, [c_{13}, c_{31}^{-1}] = [c_{12}, c_{21}^{-2}] \, c_{13}^{-c_{22}^{-2}} \, c_{22} \, c_{13}^{c_{31}^{-1}} .
$$

\end{prop}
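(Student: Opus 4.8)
The plan is to prove the presentation by working inside the classical subgroup $P_4\le VP_4$ and exploiting the iterated semidirect product decomposition $P_4=U_4\rtimes P_3$, with $U_4=\langle A_{14},A_{24},A_{34}\rangle$ free of rank $3$. By Proposition~\ref{proposition3.3} (taken with $n=4$) the six elements $c_{11},c_{21},c_{12},c_{31},c_{22},c_{13}$ already generate $P_4$, and (with $n=3$) the first three generate $P_3$, so the real task is to produce a complete set of defining relations and to show the listed ones suffice. The guiding observation is that the relations fall into two grades according to $k+l$: the two relations $c_{21}^{c_{11}^{\varepsilon}}=c_{21}$ and $c_{12}^{c_{11}^{\varepsilon}}=c_{12}^{c_{21}^{-\varepsilon}}$ involve only the level-$3$ generators $c_{11},c_{21},c_{12}$, whereas every other relation rewrites a conjugate of one of the level-$4$ generators $c_{31},c_{22},c_{13}$ as a word in $c_{31},c_{22},c_{13}$ alone. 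This is exactly the shape of a presentation of a semidirect product with free kernel.

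First I would make the change of generators explicit. Using \eqref{equation3.3} and \eqref{equation3.4} I would express each $\lambda_{k4}$ and $\lambda_{4k}$ through the $a_{k,l},b_{k,l}$, hence through the $c_{k,l}$; together with $c_{11}=\sigma_1^{-2}$ and the corresponding level-$3$ formulas, this yields a substitution between $\{A_{ij}\}$ and $\{c_{ij}\}$ that is triangular with respect to the grading $k+l$ and hence invertible. From this substitution I would read off that $\langle c_{11},c_{21},c_{12}\rangle=P_3$ and that $c_{31},c_{22},c_{13}$ lie in the free kernel $U_4$; since these three elements generate a free group of rank at most $3$ and surject onto the rank-$3$ free group $U_4$, they freely generate $U_4$. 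The relations $c_{21}^{c_{11}^{\varepsilon}}=c_{21}$ and $c_{12}^{c_{11}^{\varepsilon}}=c_{12}^{c_{21}^{-\varepsilon}}$ are the classical specialization of the presentation of $VP_3$ obtained in~\cite{BMVW}, which I would cite to settle the $P_3$-block.

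It then remains to compute, for each conjugator $g\in\{c_{11},c_{21},c_{12}\}$ and each $x\in\{c_{31},c_{22},c_{13}\}$, the element $g^{-\varepsilon}xg^{\varepsilon}\in U_4$ and to rewrite it in the free basis $c_{31},c_{22},c_{13}$. I would obtain these by feeding the Artin conjugation rules for $A_{14},A_{24},A_{34}$ by the generators of $P_3$ through the triangular change of basis above. The relations for $g=c_{11}$ and $g=c_{21}$, as well as $c_{31}^{c_{12}^{\varepsilon}}=c_{31}$ and $c_{13}^{c_{12}^{\varepsilon}}=c_{13}^{c_{31}^{-\varepsilon}}$, should fall out after short manipulations. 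The hard part will be the two relations for $c_{22}^{c_{12}^{\pm1}}$: here the conjugate does not stay inside a single free generator but spreads across all three, so inverting the nonlinear change of basis is exactly what produces the commutator corrections $[c_{21}^2,c_{12}^{-1}]$, $[c_{22}^{-3},c_{13}]$, $[c_{31},c_{13}^{-1}]$ and the rest. I expect this to be the main computational effort; I would first compute $c_{22}^{c_{12}^{-1}}$, then derive $c_{22}^{c_{12}}$ from it, and use the two displayed equivalent normal forms of each relation as a built-in consistency check on the bookkeeping.

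Finally, to see that these relations are complete I would invoke the standard presentation of a semidirect product $N\rtimes H$ with $N$ free: it is generated by a free basis of $N$ together with a presentation of $H$, subject only to the relations rewriting $h^{-1}xh$, for $h$ a generator of $H$ and $x$ a basis element of $N$, as a word in the basis of $N$. Concretely, the verified relations allow every word in the $c_{ij}$ to be put in the normal form $u\cdot p$ with $u\in\langle c_{31},c_{22},c_{13}\rangle$ and $p\in\langle c_{11},c_{21},c_{12}\rangle$, so the group abstractly presented by the listed relations splits as (the free group on $c_{31},c_{22},c_{13}$)$\,\rtimes P_3=U_4\rtimes P_3=P_4$. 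Comparing this normal form with the known decomposition shows the canonical surjection onto $P_4$ is injective, which completes the proof.
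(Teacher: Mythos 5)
Your proposal rests on a structural premise that is false: the level-$4$ cabled generators do \emph{not} lie in the free kernel $U_4=\langle A_{14},A_{24},A_{34}\rangle$. Writing them out, $c_{31}=A_{34}^{-1}A_{24}^{-1}A_{14}^{-1}$ does lie in $U_4$, but $c_{22}=A_{24}^{-1}A_{14}^{-1}A_{23}^{-1}A_{13}^{-1}$ and $c_{13}=A_{14}^{-1}A_{13}^{-1}A_{12}^{-1}$ map under the strand-deletion homomorphism $P_4\to P_3$ to $c_{21}$ and $c_{12}$ respectively, not to $1$. So $c_{31},c_{22},c_{13}$ cannot ``surject onto the rank-$3$ free group $U_4$,'' and your identification $\langle c_{31},c_{22},c_{13}\rangle=U_4$ fails at the first step (the correct free basis of $U_4$ in cabled generators is $A_{14}=c_{12}c_{13}^{-1}$, $A_{24}=c_{13}c_{12}^{-1}c_{21}c_{22}^{-1}$, $A_{34}=c_{22}c_{21}^{-1}c_{31}^{-1}$). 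The same error invalidates your ``guiding observation'': the presentation in the statement is \emph{not} of semidirect-product shape, because the relations for $c_{22}^{c_{12}^{\pm1}}$ contain the factors $[c_{21}^2,c_{12}^{-1}]$ and $[c_{12},c_{21}^{-2}]$, which are words in the level-$3$ generators, not in $c_{31},c_{22},c_{13}$ alone. Consequently conjugation by $P_3$ does not preserve $T_3^c=\langle c_{31},c_{22},c_{13}\rangle$, and your final normal-form/completeness argument collapses: in the group abstractly presented by the listed relations, killing the normal closure of $\{c_{31},c_{22},c_{13}\}$ also forces $[c_{21}^2,c_{12}^{-1}]=1$, so the quotient is a \emph{proper} quotient of $P_3$ (in $P_3$ the subgroup $\langle c_{21},c_{12}\rangle$ is free of rank $2$); hence the presented group does not split as $(\text{free on }c_{31},c_{22},c_{13})\rtimes P_3$, and indeed $T_3^c$ is not even normal in $P_4$. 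There is also a circularity lurking: freeness of $T_3^c$ is a genuinely hard fact, established in the paper only \emph{after} this proposition (via the homomorphism $\varphi:P_4\to\mathbb{Z}^2$ and a double HNN-extension, Theorem~\ref{t4.7}), and that argument takes the present presentation as input, so it cannot be assumed here.

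The paper's actual proof avoids all of this by a direct Tietze transformation: it expresses the $c_{ij}$ in the Artin generators $A_{ij}$ and conversely ($A_{12}=c_{11}^{-1}$, $A_{13}=c_{11}c_{12}^{-1}$, $A_{23}=c_{12}c_{21}^{-1}c_{11}^{-1}$, $A_{14}=c_{12}c_{13}^{-1}$, etc.), so that the defining relations of $P_4$ in the new generators are exactly Artin's relations rewritten; the substantial work is then the explicit computation converting the Artin relation $A_{13}^{-1}A_{24}A_{13}=[A_{14}^{-1},A_{34}^{-1}]\,A_{24}\,[A_{34}^{-1},A_{14}^{-1}]$ into the two displayed formulas for $c_{22}^{c_{12}^{-1}}$ and $c_{22}^{c_{12}}$ (this is where the commutator corrections $[c_{21}^2,c_{12}^{-1}]$, $[c_{12},c_{21}^{-2}]$ arise, precisely because the would-be ``kernel generators'' are not conjugation-stable). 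If you want to salvage a semidirect-product argument, you would have to run it with the kernel basis $A_{14},A_{24},A_{34}$ written as products of $c$'s and then perform the nontrivial rewriting into the stated generators anyway --- which is exactly the computation your plan defers to ``short manipulations'' and which constitutes the real content of the paper's proof.
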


\begin{proof}
Rewrite the generators $c_{ij}$ in the standard generators of $P_4$. We have
$$
c_{11} = b_{11} a_{11} = \lambda_{21} \lambda_{12} = \sigma_1^{-1} \rho_1 \rho_1 \sigma_1^{-1} = \sigma_1^{-2} = A_{12}^{-1},
$$
$$
c_{21} = b_{21} a_{21} =  \lambda_{32} (\lambda_{31} \lambda_{13}) \lambda_{23} = \sigma_2^{-1} \lambda_{21} \lambda_{12} \sigma_2^{-1} = \sigma_2^{-1} A_{12}^{-1} \sigma_2^{-1} = \sigma_2^{-2} \sigma_2 A_{12}^{-1} \sigma_2^{-1} = A_{23}^{-1} A_{13}^{-1},
$$
$$
c_{12} = b_{12} a_{12} =  \lambda_{21} (\lambda_{31} \lambda_{13}) \lambda_{12} = \sigma_1^{-1} \rho_1 \rho_2  \lambda_{21} \lambda_{12} \rho_2 \rho_1 \sigma_1^{-1} = \sigma_1^{-1} \rho_1  \lambda_{31} \lambda_{13}  \rho_1 \sigma_1^{-1} =
$$
$$
= \sigma_1^{-1}  \lambda_{32} \lambda_{23}  \sigma_1^{-1} =
\sigma_1^{-1} \sigma_2^{-1} \sigma_2^{-1} \sigma_1^{-1} =
\sigma_1^{-1} A_{23}^{-1} \sigma_1^{-1} = (\sigma_1^{-1} A_{23}^{-1} \sigma_1) A_{12}^{-1} = A_{13}^{-1} A_{12}^{-1}.
$$
And analogously,
$$
c_{31} = A_{34}^{-1} A_{24}^{-1} A_{14}^{-1},~~~c_{22} = A_{24}^{-1} A_{14}^{-1} A_{23}^{-1} A_{13}^{-1},~~~c_{13} = A_{14}^{-1} A_{13}^{-1} A_{12}^{-1}.
$$

In particular, we see that
$$
P_2 = T_1^c = \langle A_{12} \rangle,
$$
$$
P_3 = \langle T_1^c, T_2^c \rangle =  \langle c_{11},  c_{21}, c_{12} \rangle,
$$
$$
P_4 = \langle T_1^c, T_2^c, T_3^c \rangle =  \langle c_{11},  c_{21}, c_{12}, c_{31},  c_{22}, c_{13} \rangle.
$$

To find a set of defining relations, express  the old generators in the new one:
$$
A_{12} = c_{11}^{-1},~~A_{13} = c_{11} c_{12}^{-1},~~A_{23} = c_{12} c_{21}^{-1} c_{11}^{-1}.
$$
$$
A_{14} = c_{12} c_{13}^{-1},~~A_{24} = c_{13} c_{12}^{-1} c_{21} c_{22}^{-1},~~A_{34} = c_{22} c_{21}^{-1} c_{31}^{-1}.`
$$
Rewriting the set of defining relations of $P_4$ in the new generators we will find the set of defining relations.

Let us prove the formula for $c_{22}^{c_{12}^{-1}}$ and for $c_{22}^{c_{12}}$, assuming that all other formulas are true. Proofs for all others not difficult. Take the relation
$$
A_{13}^{-1} A_{24} A_{13} = [A_{14}^{-1}, A_{34}^{-1}] A_{24} [A_{34}^{-1}, A_{14}^{-1}].
$$
In the new generators this relation after cancellations has the form
$$
\left( c_{13} c_{12}^{-1} c_{21} c_{22}^{-1} \right)^{c_{11}} = c_{13}^{-1} c_{22} c_{21}^{-1} c_{31}^{-1} c_{13} c_{12}^{-1} c_{31} c_{21} c_{22}^{-1} c_{13} c_{31}^{-1} c_{13}^{-1} c_{31} c_{21} c_{22}^{-1} c_{13}.
$$
Using the formulas of conjugating by $c_{11}^{-1}$ we get
$$
c_{22} c_{13} c_{22}^{-1} c_{21} c_{12}^{-1} c_{22}^{-1} = c_{13}^{-1} c_{22} (c_{21}^{-1} c_{31}^{-1} c_{13}) c_{12}^{-1} c_{31} c_{21} c_{22}^{-1} c_{13} c_{31}^{-1} c_{13}^{-1} c_{31} (c_{21} c_{22}^{-1} c_{13}).
$$
Rewrite the term in the brackets in the form
$$
c_{21}^{-1} c_{31}^{-1} c_{13} = (c_{31}^{-1} c_{13})^{c_{21}} c_{21}^{-1} = c_{22}^{-1} c_{13} c_{22} c_{31}^{-1} c_{21}^{-1},
$$
$$
c_{21} c_{22}^{-1} c_{13} = (c_{22}^{-1} c_{13})^{c_{21}^{-1}} c_{21} = c_{31}^{-1} c_{13} c_{22}^{-1} c_{31} c_{21},
$$
we get
$$
(c_{13} c_{22}^{-1} c_{21}) c_{12}^{-1} c_{22}^{-1} = (c_{31}^{-1} c_{21}^{-1} c_{12}^{-1} c_{31}) c_{21} (c_{22}^{-1} c_{13} c_{31}^{-1} c_{22}^{-1} c_{31} c_{21}).
$$
Using the conjugation rules, rewrite the term in the brackets in the form
$$
c_{13} c_{22}^{-1} c_{21} = c_{21} (c_{13} c_{22}^{-1})^{c_{21}} = c_{21} c_{31} c_{22}^{-1}  c_{13} c_{31}^{-1},~~~
c_{31}^{-1} c_{21}^{-1} c_{12}^{-1} c_{31} = (c_{21}^{-1} c_{12}^{-1})^{c_{31}} = c_{21}^{-1} c_{12}^{-1},
$$
$$
c_{22}^{-1} c_{13} c_{31}^{-1} c_{22}^{-1} c_{31} c_{21}= c_{21} (c_{22}^{-1} c_{13} c_{31}^{-1} c_{22}^{-1} c_{31})^{c_{21}} = c_{21} c_{31} c_{22}^{-2} c_{13} c_{22} c_{31}^{-1} c_{22}^{-1},
$$
then
$$
c_{21} c_{31} c_{22}^{-1} c_{13} c_{31}^{-1} c_{12}^{-1} = c_{21}^{-1} c_{12}^{-1} c_{21}^{2} c_{31}  c_{22}^{-2} c_{13} c_{22} c_{31}^{-1}.
$$
Conjugating both sides by $c_{31}$ and using the fact that it commutes with $c_{12}$ and $c_{21}$, we get
\begin{equation} \label{conj}
c_{21} (c_{22}^{-1} c_{13}  c_{12}^{-1}) = c_{21}^{-1} c_{12}^{-1} c_{21}^{2} c_{22}^{-2} c_{13} c_{22}.
\end{equation}
Transforming the expression in the brackets, we get
$$
c_{21} c_{12}^{-1} c_{22}^{-c_{12}^{-1}}  c_{31}^{-1} c_{13} c_{31} = c_{21}^{-1} c_{12}^{-1} c_{21}^{2} c_{22}^{-2} c_{13} c_{22}.
$$
Multiply both sides to $c_{21}^{-2} c_{12} c_{21}$ on the left ant to $ c_{31}^{-1} c_{13}^{-1} c_{31}$ on the right we get

$$
[c_{21}^{2}, c_{12}^{-1}] c_{22}^{-c_{12}^{-1}}  = c_{22}^{-1} [c_{22}, c_{13}^{-1}] [c_{13}^{-1}, c_{31}].
$$
From this relation follows that

$$
c_{22}^{c_{12}^{-1}} = [c_{31}, c_{13}^{-1}] \, [c_{13}^{-1}, c_{22}] \, c_{22} \, [c_{21}^2, c_{12}^{-1}].
$$

\bigskip

To find conjugation formula $c_{22}^{c_{12}}$, we are using (\ref{conj})
$$
c_{21} c_{22}^{-1} c_{13}  c_{12}^{-1} = c_{21}^{-1} c_{12}^{-1} c_{21}^{2} (c_{22}^{-2} c_{13} c_{22}).
$$
Since
$$
(c_{22}^{-1} c_{13})^{c_{11}^{-1}} = c_{22}^{-2} c_{13} c_{22},
$$
then
$$
c_{21} (c_{22}^{-1} c_{13})  c_{12}^{-1} = c_{21}^{-1} c_{12}^{-1} c_{21}^{2} (c_{22}^{-1} c_{13})^{c_{11}^{-1}}.
$$
Multiply both sides to $c_{21}^{-2} c_{12} c_{21}$ to the left:
$$
[c_{21}^2, c_{12}^{-1}] (c_{22}^{-1} c_{13})^{c_{12}^{-1}} =  (c_{22}^{-1} c_{13})^{c_{11}^{-1}} \Leftrightarrow
(c_{22}^{-1} c_{13})^{c_{12}^{-1}} = [c_{12}^{-1}, c_{21}^{2}] (c_{22}^{-1} c_{13})^{c_{11}^{-1}}.
$$
Using the relation
$$
[c_{12}^{-1}, c_{21}^{2}] = [c_{12}^{-1}, c_{11}^{-2}]
$$
we have
$$
c_{22}^{-1} c_{13} c_{12}^{-1} =  c_{11}^{2} c_{12}^{-1} c_{11}^{-1} (c_{22}^{-1} c_{13}) c_{11}^{-1}.
$$
Multiply both sides to $c_{12}$ on the right
$$
c_{22}^{-1} c_{13} = c_{11}^{2} c_{12}^{-1} (c_{11}^{-1} c_{22}^{-1} c_{13}) c_{11}^{-1} c_{12}.
$$
Rewrite expression in the brackets
$$
c_{11}^{-1} c_{22}^{-1} c_{13} = (c_{22}^{-1} c_{13})^{c_{11}} c_{11}^{-1} = (c_{13} c_{22}^{-1}) c_{11}^{-1},
$$
then
$$
c_{22}^{-1} c_{13} = c_{11}^{2} c_{12}^{-1} (c_{13} c_{22}^{-1})  c_{11}^{-2} c_{12} \Leftrightarrow
c_{22}^{-1} c_{13} = c_{11}^{2}  c_{13}^{c_{12}} c_{22}^{-c_{12}} c_{12}^{-1}  c_{11}^{-2} c_{12}.
$$
Multiply both sides to $c_{13}^{-c_{12}} c_{11}^{-2}$ on the left and to $c_{12}^{-1}  c_{11}^{2} c_{12}$ on the right, we get
$$
c_{13}^{-c_{12}} (c_{11}^{-2} c_{22}^{-1} c_{13})  c_{12}^{-1} c_{11}^{2} c_{12} =  c_{22}^{-c_{12}}.
$$
Find the expression in the brackets
$$
c_{11}^{-2} c_{22}^{-1} c_{13} = (c_{22}^{-1} c_{13})^{c_{11}^{2}} c_{11}^{-2} = c_{22}^{-1} c_{13}^{c_{22}^{-2}} c_{11}^{-2}.
$$
Then
$$
c_{13}^{-c_{12}} c_{22}^{-1} c_{13}^{c_{22}^{-2}} [c_{11}^{2}, c_{12}] =  c_{22}^{-c_{12}}.
$$
Using the relations
$$
c_{13}^{-c_{12}} = c_{13}^{-c_{31}^{-1}},~~~[c_{11}^{2}, c_{12}] = [c_{21}^{-2}, c_{12}],
$$
we get
$$
c_{13}^{-c_{31}^{-1}} c_{22}^{-1} c_{13}^{c_{22}^{-2}} [c_{21}^{-2}, c_{12}] =  c_{22}^{-c_{12}}.
$$
from this relation follows the need relation.
\end{proof}

\subsection{Decomposition of $P_4$} In the paper \cite{CW} was proved that the Milnor simplicial group $F[S^1]$ is embedded into the simplicial group $AP_*$. The main problem in this theorem is the proof that groups $T_n^c$, $n = 2, 3, \ldots$, are free. To do it the authors used some Lie algebras. In this section we prove,  that $T_2^c$ and $T^c_3$ are free groups using group-theoretical methods.  Note, that $T_1^c$ is infinite cyclic.

From Proposition \ref{prop4.1} follows that $P_3$ has the following presentation
$$
P_3 = \langle c_{11},~~ c_{21},~~ c_{12}~||~
c_{21}^{c_{11}^{\varepsilon}} = c_{21},~~~c_{12}^{c_{11}^{\varepsilon}} = c_{12}^{c_{21}^{-\varepsilon}} \rangle.
$$
Hence, $PV_3 = T_2^c \leftthreetimes \mathbb{Z}$, where $T_2 = \langle c_{21}, c_{12} \rangle$ is a free group and $\mathbb{Z} = \langle c_{11} \rangle$.

To prove that $T_3^c$ is free,
define a homomorphism of $P_4$ onto free abelian group of rank 2:
$$
\varphi : P_4 \to \langle x, y ~||~ x y = y x \rangle = \mathbb{Z}^2,
$$
by the rule:
$$
\varphi(c_{11}) = x,~~\varphi(c_{21}) = y,~~\varphi(c_{12}) = e,~~\varphi(c_{31}) = e,~~\varphi(c_{22}) = e,~~\varphi(c_{13}) = e,
$$
where $e$ is the unit element of abelaian group.

Note that subgroup of $P_4$ that is generated by $c_{11}$ and $c_{21}$ is free abelian of rank 2. Hence, for the short exact sequens
$$
1 \to Ker (\varphi) \to P_4 \to \mathbb{Z}^2 \to 1,
$$
there exist a section $s : \mathbb{Z}^2 \to P_4$, $s(x) = a_{11}$, $s(y) = a_{21}$ and we have decomposition $P_4 = Ker (\varphi) \leftthreetimes \mathbb{Z}^2$ of $P_4$ into a semi-direct product.

Let us find a set of generators and defining relations for $Ker (\varphi)$. Put
$$
\Lambda = \{ c_{11}^k c_{21}^l ~|~k, l \in \mathbb{Z} \}
$$
is a set of coset representatives of $P_4$ by $s(\mathbb{Z}^2)$. Then $Ker (\varphi)$ is generated by elements
$$
c_{12}^{\lambda},~~c_{31}^{\lambda},~~c_{22}^{\lambda},~~c_{13}^{\lambda},~~\mbox{where}~~\lambda \in \Lambda.
$$
Using the following defining relations of $P_4$:
$$
c_{21}^{c_{11}^{\varepsilon}} = c_{21},~~~c_{12}^{c_{11}^{\varepsilon}} = c_{12}^{c_{21}^{-\varepsilon}},~~~c_{31}^{c_{11}^{\varepsilon}} = c_{31},~~~c_{22}^{c_{11}^{\varepsilon}} = c_{22},~~~c_{13}^{c_{11}^{\varepsilon}} = c_{13}^{c_{22}^{-\varepsilon}},
$$
$$
c_{31}^{c_{21}^{\varepsilon}} = c_{31},~~~c_{22}^{c_{21}^{\varepsilon}} = c_{22}^{c_{31}^{-\varepsilon}},~~~c_{13}^{c_{21}^{\varepsilon}} = c_{13}^{c_{22}^{\varepsilon} c_{31}^{-\varepsilon}},
$$
rewrite the generators of $Ker (\varphi)$ in the form
$$
c_{12}^{c_{11}^k c_{21}^l} = c_{12}^{c_{21}^{l-k}},~~~c_{31}^{c_{11}^k c_{21}^l} = c_{31},~~~c_{22}^{c_{11}^k c_{21}^l} = c_{22}^{c_{31}^{-l}},~~~c_{13}^{c_{11}^k c_{21}^l} = c_{13}^{c_{22}^{l-k} c_{31}^{-l}}.
$$
Hence, $Ker (\varphi)$ is generated by $c_{31}$, $c_{22}$, $c_{13}$ and infinite set $c_{12}^{c_{21}^{m}}$,~~$m \in \mathbb{Z}$. For simplicity we will denote $d_m = c_{12}^{c_{21}^{m}}$.

To find a set of defining relations of $Ker (\varphi)$, we take the last relations of $P_4$:
$$
c_{31}^{c_{12}} = c_{31},~~~c_{13}^{c_{12}} = c_{13}^{c_{31}^{-1}}.
$$
$$
c_{22}^{c_{12}^{-1}}  = c_{13}^{c_{31}} c_{13}^{-c_{22}}  c_{22} [c_{21}^2, c_{12}^{-1}].
$$
For simplicity, instead the last relation take relation
(\ref{conj}):
$$
c_{21} c_{22}^{-1} c_{13}  c_{12}^{-1} = c_{21}^{-1} c_{12}^{-1} c_{21}^{2} (c_{22}^{-2} c_{13} c_{22}),
$$
which is equivalent to the last one. Conjugating these relations by coset representatives $\lambda \in \Lambda$, we get a set of defining relations for $Ker (\varphi)$.

1) Conjugating the relation $c_{12}^{-1} c_{31} c_{12} = c_{31}$ by $c_{11}^k c_{21}^l$, we get
$$
c_{12}^{-c_{21}^{l-k}} c_{31} c_{12}^{c_{21}^{l-k}} = c_{31}.
$$
Put $m = l-k$, we get the set of relations
$$
d_m^{-1} c_{31} d_m = c_{31},~~m \in \mathbb{Z}.
$$

2) Conjugating the relation $c_{12}^{-1} c_{13} c_{12} = c_{31} c_{13} c_{31}^{-1}$ by $c_{11}^k c_{21}^l$, we get
$$
c_{12}^{-c_{21}^{l-k}} c_{13}^{c_{22}^{l-k} c_{31}^{-l}} c_{12}^{c_{21}^{l-k}} = c_{31} c_{13}^{c_{22}^{l-k} c_{31}^{-l}} c_{31}^{-1}.
$$
Conjugating this relation by $c_{31}^{l}$ and put $m = l-k$ we get the set of relations
$$
d_m^{-1} c_{13}^{c_{22}^{m} } d_m =  c_{13}^{c_{22}^{m} c_{31}^{-1}},~~m \in \mathbb{Z}.
$$

3) Conjugating the relation $c_{22}^{-1} c_{13}  c_{12}^{-1} =  c_{12}^{- c_{21}^{2}} c_{22}^{-2} c_{13} c_{22}$ by $c_{11}^k c_{21}^l$, we get
$$
c_{22}^{-c_{31}^{-l}} c_{13}^{c_{22}^{l-k} c_{31}^{-l}} c_{12}^{-c_{21}^{l-k}} =  c_{12}^{- c_{21}^{l-k+2}} \left( c_{22}^{-c_{31}^{-l}} \right)^{-2} c_{13}^{c_{22}^{l-k} c_{31}^{-l}} c_{22}^{c_{31}^{-l}}.
$$
Conjugating this relation by $c_{31}^{l}$ and put $m = l-k$ we get the set of relations
$$
c_{22}^{-1} c_{13}^{c_{22}^{l-k}} d_m^{-1} =  d_{m+2}^{-1} c_{22}^{2} c_{13}^{c_{22}^{m}} c_{22}.
$$
Hence, we prove

\begin{lem}
$Ker (\varphi)$ is generated by
$$
c_{31},~~c_{22},~~c_{13},~~d_m,~~m \in \mathbb{Z},
$$
and is defined by relations\\

1)~~~ $d_m^{-1} \, c_{31} \, d_m = c_{31},$\\

2)~~~ $ d_m^{-1} \, c_{13}^{c_{22}^{m}} \, d_m =  c_{13}^{c_{22}^{m} c_{31}^{-1}},$\\

3)~~~ $d_{m+2}  = c_{22}^{-1} \, c_{13}^{c_{22}^{m+1}} \, d_m  \, c_{13}^{-c_{22}^{m}} \, c_{22}$\\

for $m \in \mathbb{Z}$.

\end{lem}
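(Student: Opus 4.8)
The plan is to read off the presentation of $Ker(\varphi)$ by the Reidemeister--Schreier method applied to the presentation of $P_4$ given in Proposition~\ref{prop4.1}. Since $c_{11}$ and $c_{21}$ generate a free abelian subgroup of rank $2$ that maps isomorphically onto $\mathbb{Z}^2$ under $\varphi$, the section image $\Lambda = \{ c_{11}^k c_{21}^l \mid k,l \in \mathbb{Z}\} = \langle c_{11}, c_{21}\rangle$ is simultaneously a subgroup and a Schreier transversal for $Ker(\varphi)$ in $P_4$: every prefix, in the letters $c_{11}^{\pm 1}, c_{21}^{\pm 1}$, of a normal form $c_{11}^k c_{21}^l$ is again of that form. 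First I would record the Schreier generators $\lambda g\,\overline{\lambda g}^{-1}$. Because $\varphi(c_{11})=x$ and $\varphi(c_{21})=y$ lie outside $Ker(\varphi)$ while $\varphi$ kills $c_{12}, c_{31}, c_{22}, c_{13}$, the generators attached to $c_{11}$ and $c_{21}$ are trivial, and the remaining ones are the conjugates $\lambda g \lambda^{-1}$ of $g \in \{c_{12}, c_{31}, c_{22}, c_{13}\}$ by $\lambda \in \Lambda$. As $\Lambda$ is a subgroup (so closed under inversion), these range over exactly the elements $c_{12}^{\lambda}, c_{31}^{\lambda}, c_{22}^{\lambda}, c_{13}^{\lambda}$ listed before the statement.

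Next I would cut this generating set down using the conjugation relations of Proposition~\ref{prop4.1} whose conjugators are $c_{11}$ or $c_{21}$. These give exactly the normal forms $c_{31}^{\lambda} = c_{31}$, $c_{22}^{\lambda} = c_{22}^{c_{31}^{-l}}$, $c_{13}^{\lambda} = c_{13}^{c_{22}^{l-k}c_{31}^{-l}}$, and $c_{12}^{\lambda} = c_{12}^{c_{21}^{l-k}} =: d_{l-k}$. Hence every Schreier generator lies in the subgroup generated by $c_{31}, c_{22}, c_{13}$ together with the infinite family $\{d_m\}_{m\in\mathbb{Z}}$, which establishes the claimed generating set.

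For the relations I would invoke the Reidemeister--Schreier theorem: a complete set is obtained by conjugating each defining relation of $P_4$ by every $\lambda \in \Lambda$ and rewriting the result in the Schreier generators. I would split the relations of Proposition~\ref{prop4.1} into those with conjugator $c_{11}$ or $c_{21}$ and those with conjugator $c_{12}$. The first block is precisely what produced the normal forms above; read as Tietze transformations they eliminate the redundant generators, so after the elimination they contribute nothing new. The second block consists of $c_{31}^{c_{12}} = c_{31}$, $c_{13}^{c_{12}} = c_{13}^{c_{31}^{-1}}$, and the formula for $c_{22}^{c_{12}^{\pm 1}}$; for the last one I would use the equivalent relation~(\ref{conj}), which is technically more convenient. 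Conjugating each of these by $c_{11}^k c_{21}^l$, substituting the normal forms, then conjugating by $c_{31}^{l}$ to clear the trailing $c_{31}^{-l}$ and setting $m = l-k$, yields exactly the three families $1)$, $2)$, $3)$ displayed before the statement.

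The step I expect to cost the most care is the completeness and collapse of the relation set, not the generator reduction. Two points need checking: that the $c_{11}, c_{21}$-conjugation relations really leave no residual relation once the redundant generators are removed (so that only families $1)$--$3)$ survive), and that conjugating by two representatives $c_{11}^k c_{21}^l$ and $c_{11}^{k'}c_{21}^{l'}$ with the same difference $l-k$ produces equivalent relations, so that indexing by the single integer $m$ suffices. The genuinely computational part is the rewriting of relation~(\ref{conj}) into family $3)$: it mixes $c_{12}, c_{22}, c_{13}, c_{31}$, whose images under $\varphi$ differ, so the $c_{31}^{l}$-normalization and the bookkeeping of the exponents $l-k$ and $l-k+2$ must be tracked carefully in order to land on the relation $d_{m+2} = c_{22}^{-1} c_{13}^{c_{22}^{m+1}} d_m c_{13}^{-c_{22}^{m}} c_{22}$.
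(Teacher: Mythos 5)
Your proposal is correct and follows essentially the same route as the paper: the same transversal $\Lambda=\{c_{11}^k c_{21}^l\}$, the same use of the $c_{11},c_{21}$-conjugation relations of Proposition~\ref{prop4.1} to cut the Schreier generators down to $c_{31}, c_{22}, c_{13}, d_m$, and the same treatment of the $c_{12}$-conjugation relations (with relation~(\ref{conj}) substituted for the $c_{22}^{c_{12}^{-1}}$ formula), conjugated by $c_{11}^k c_{21}^l$, normalized by conjugation with $c_{31}^l$, and indexed by $m=l-k$, to obtain families 1)--3). The only difference is presentational: you invoke the Reidemeister--Schreier theorem by name, while the paper carries out the identical computation in the language of coset representatives.
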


From the set of relations 3) we express all generators $d_m$, $m \not= 0, 1$ as words in the generators
$$
d_0, d_1, c_{31}, c_{22}, c_{13}.
$$
If $m = 2 m_1 \geq 0$, then from 3) we have
$$
d_m = c_{22}^{-m_1} c_{13}^{c_{22}^{m_1}} c_{13}^{c_{22}^{m_1-1}} \ldots c_{13}^{c_{22}} \, d_0 \, c_{13}^{-1} \, c_{13}^{-c_{22}} \ldots c_{13}^{-c_{22}^{m_1-1}} c_{22}^{m_1}.
$$

If $m = 2 m_1 < 0$, then rewrite 3) in the form
$$
d_{m}  = c_{22} \, c_{13}^{-c_{22}^{m+2}} \, d_{m+2} \, c_{13}^{c_{22}^{m+1}} \, c_{22}^{-1}
$$
and by induction we get
$$
d_m = c_{22}^{-m_1} \, c_{13}^{-c_{22}^{-(m_1+1)}} \, c_{13}^{c_{22}^{-(m_1+2)}} \ldots c_{13}^{-c_{22}^{-1}} \, c_{13}^{-1} \, d_0 \,  \, c_{13}^{c_{22}^{-1}} \, c_{13}^{c_{22}^{-2}} \ldots c_{13}^{c_{22}^{-m_1}} c_{22}^{m_1}.
$$

Put these formulas into relations 2), we get

\begin{lem} \label{l4.3}
The set of relations 2) for the even indexes $m = 2 m_1$ is equivalent to the following set of relations:
$$
\left( c_{13}^{(c_{13} \, c_{22})^{m_1}} \right)^{d_0} = c_{13}^{c_{22}^{m} \, c_{31}^{-1} \, c_{22}^{-m} \,(c_{22} c_{13})^{m_1}}.
$$
\end{lem}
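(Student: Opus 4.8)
The plan is to feed the closed form for $d_m$ obtained from relations 3) straight into relation 2) and to collapse both sides by telescoping. Throughout I write $e_j = c_{13}^{c_{22}^j}=c_{22}^{-j}c_{13}c_{22}^{j}$, so that for an even index $m=2m_1\geq 0$ the displayed formula reads $d_m = P\,d_0\,Q$ with prefix $P = c_{22}^{-m_1}e_{m_1}e_{m_1-1}\cdots e_1$ and suffix $Q = e_0^{-1}e_1^{-1}\cdots e_{m_1-1}^{-1}c_{22}^{m_1}$. Since this closed form is exactly what relations 3) force after the elimination of the $d_m$ with $m\neq 0,1$, substituting it into relation 2) is an invertible rewrite; hence relation 2) at index $m$ is equivalent to whatever clean identity the substitution produces, which is what the statement asserts.

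First I would compute the conjugate $P^{-1}c_{13}^{c_{22}^{m}}P$. Because $c_{22}^{m_1}e_{2m_1}c_{22}^{-m_1}=e_{m_1}$, the inner factor $e_{m_1}^{-1}$ coming from $P^{-1}$ cancels against it, leaving $P^{-1}c_{13}^{c_{22}^m}P = e_{m_1}^{\,e_{m_1-1}\cdots e_1}$. The telescoping identity $e_{m_1-1}\cdots e_1 = c_{22}^{-(m_1-1)}(c_{13}c_{22})^{m_1-1}$ then turns this conjugate into $c_{13}^{(c_{13}c_{22})^{m_1}}$. Substituting back into relation 2) and multiplying the resulting equation by $Q$ on the left and $Q^{-1}$ on the right gives $\bigl(c_{13}^{(c_{13}c_{22})^{m_1}}\bigr)^{d_0} = Q\,c_{13}^{c_{22}^{m}c_{31}^{-1}}\,Q^{-1} = c_{13}^{\,c_{22}^m c_{31}^{-1}Q^{-1}}$, so the only remaining task is to evaluate $Q^{-1}$. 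A second telescoping, this time $e_{m_1-1}\cdots e_1 e_0 = c_{22}^{-(m_1-1)}(c_{13}c_{22})^{m_1-1}c_{13}$, yields $Q^{-1}=c_{22}^{-m}(c_{22}c_{13})^{m_1}$; inserting this into the exponent reproduces precisely $c_{13}^{\,c_{22}^m c_{31}^{-1}c_{22}^{-m}(c_{22}c_{13})^{m_1}}$, the right-hand side of the asserted relation.

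The main obstacle is purely bookkeeping: keeping the telescoping of the $c_{22}$-powers inside the products $e_{m_1-1}\cdots e_1$ (and its extension by $e_0$) correct, and applying the convention $a^{bc}=(a^b)^c$ consistently when shuttling $Q^{-1}$ and $c_{31}^{-1}$ past one another. One must also handle the negative even indices $m=2m_1<0$ separately, running the identical argument on the second closed form for $d_m$ displayed above; I expect the same two telescoping identities, with the index ranges reversed, to produce the same final relation, so that the two cases merge into the single statement. Finally I would note explicitly that every manipulation is reversible, which is what upgrades the computation from a one-way implication to the claimed equivalence of the two sets of relations.
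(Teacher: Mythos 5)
Your proposal is correct and follows essentially the same route as the paper: the paper's entire proof is the single line ``Put these formulas into relations 2), we get,'' i.e.\ substitute the closed forms for $d_m$ (with $m=2m_1$) obtained from relations 3) into relation 2), which is exactly what you carry out, and your telescoping identities $e_{m_1-1}\cdots e_1 = c_{22}^{-(m_1-1)}(c_{13}c_{22})^{m_1-1}$ and $Q^{-1}=c_{22}^{-m}(c_{22}c_{13})^{m_1}$ check out. You supply the detail the paper omits (including the reversibility remark that gives the stated equivalence), with only the negative even indexes left as a sketched but routine parallel case.
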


Now consider the odd indexes. If $m = 2 m_1 +1 > 0$, then from 3) we have
$$
d_m = c_{22}^{-m_1} c_{13}^{c_{22}^{m_1+1}} c_{13}^{c_{22}^{m_1}} \ldots c_{13}^{c_{22}^2} \, d_1 \,  c_{13}^{-c_{22}} \, c_{13}^{-c_{22}^2} \ldots c_{13}^{-c_{22}^{m_1}} c_{22}^{m_1}.
$$
If $m = 2 m_1 + 1 < 0$, then
$$
d_m = c_{22}^{-m_1} \, c_{13}^{-c_{22}^{m_1+2}} \, c_{13}^{-c_{22}^{m_1}+3} \ldots c_{13}^{-c_{22}^{-1}} \, c_{13}^{-1} \, c_{13}^{-c_{22}} \, d_1 \,  \, c_{13} \, c_{13}^{c_{22}^{-1}} \ldots c_{13}^{-c_{22}^{m_1+1}} c_{22}^{m_1}.
$$

Put these formulas into relations 2), we get

\begin{lem} \label{l4.4}
The set of relations 2) for the odd indexes $m = 2 m_1+1$ is equivalent to the following set of relations:
$$
\left( c_{13}^{(c_{22} \, c_{13})^{m_1-1} c_{22}^2} \right)^{d_1} = c_{13}^{c_{22}^{m} \, c_{31}^{-1} \, c_{22}^{-(m-1)} \,(c_{13} c_{22})^{m_1}}.
$$
\end{lem}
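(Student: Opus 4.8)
The plan is to substitute the explicit word for $d_m$ from the odd-index formulas displayed immediately above the statement (these being themselves consequences of the relations~3)) directly into relation~2), and then to simplify until only $d_1$ survives as a conjugator. Because this substitution invokes only relations~3) and consists of reversible group manipulations, the resulting identity is equivalent to relation~2) for that particular $m$ modulo relations~1)--3), which is precisely the asserted equivalence. Thus the real content is the simplification, not the logical direction.

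First I would record the basic telescoping identity for products of $c_{22}$-conjugates of $c_{13}$: for consecutive exponents $k\geq j$ one has $c_{13}^{c_{22}^{k}} c_{13}^{c_{22}^{k-1}} \cdots c_{13}^{c_{22}^{j}} = c_{22}^{-k}(c_{13}c_{22})^{k-j+1}c_{22}^{\,j-1}$, since every interior junction $c_{22}^{\,i}c_{22}^{-(i-1)}$ collapses to a single $c_{22}$. Applying this to the prefix and the suffix of the displayed formula for $d_m$ in the range $m=2m_1+1>0$, I expect the closed forms $P=c_{22}^{-m}(c_{13}c_{22})^{m_1}c_{22}$ for the part preceding $d_1$ and $Q=(c_{13}c_{22})^{-m_1}c_{22}^{2m_1}$ for the part following it, so that $d_m=P\,d_1\,Q$.

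Next I would write relation~2), namely $d_m^{-1}c_{13}^{c_{22}^{m}}d_m=c_{13}^{c_{22}^{m}c_{31}^{-1}}$, in the isolated form
$$d_1^{-1}\bigl(P^{-1}c_{13}^{c_{22}^{m}}P\bigr)d_1 = Q\,c_{13}^{c_{22}^{m}c_{31}^{-1}}\,Q^{-1},$$
and simplify the two conjugating words separately. On the left the powers of $c_{22}$ cancel and $P^{-1}c_{13}^{c_{22}^{m}}P=c_{13}^{(c_{13}c_{22})^{m_1}c_{22}}$; rewriting $(c_{13}c_{22})^{m_1}c_{22}=c_{13}(c_{22}c_{13})^{m_1-1}c_{22}^{2}$ and discarding the harmless leading $c_{13}$, which fixes $c_{13}$ under conjugation, turns the left side into $\bigl(c_{13}^{(c_{22}c_{13})^{m_1-1}c_{22}^{2}}\bigr)^{d_1}$. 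On the right, $Q\,c_{13}^{c_{22}^{m}c_{31}^{-1}}\,Q^{-1}=c_{13}^{\,c_{22}^{m}c_{31}^{-1}Q^{-1}}$, and using $m-1=2m_1$ so that $c_{22}^{-2m_1}=c_{22}^{-(m-1)}$, the conjugator becomes $c_{22}^{m}c_{31}^{-1}c_{22}^{-(m-1)}(c_{13}c_{22})^{m_1}$, which is exactly the right-hand exponent in the statement. Equating the two yields the asserted relation.

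Finally I would treat $m=2m_1+1<0$ using the second displayed formula for $d_m$; the same telescoping collapses it to closed forms of identical shape with $m_1$ now negative, and repeating the identical simplification produces the same relation, so a single formula covers both signs. The main obstacle is purely bookkeeping: keeping the exponents exact through the telescoping, and in particular respecting that $c_{31}$ does \emph{not} commute with $c_{22}$, so the factor $c_{31}^{-1}$ must be carried along verbatim through the conjugation on the right-hand side rather than being absorbed into the $c_{22}$-powers.
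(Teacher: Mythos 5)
Your proposal is correct and follows essentially the same route as the paper: the paper's (largely implicit) proof likewise substitutes the expressions for $d_m$, $m=2m_1+1$, obtained from relations 3) into relations 2) and simplifies, exactly as you do. Your explicit telescoping closed forms $P=c_{22}^{-m}(c_{13}c_{22})^{m_1}c_{22}$ and $Q=(c_{13}c_{22})^{-m_1}c_{22}^{2m_1}$ (valid for both signs of $m$, since $2m_1=m-1$ identically) simply make precise the computation the paper leaves to the reader, and they check out.
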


Considering the relations 1) and input the expressions for $d_m$ into these relations we get.

\begin{lem} \label{l4.5}
The set of relations 1) is equivalent to the union of the following sets of relations:

if $m = 2 m_1$ is even, then
$$
\left(  c_{31}^{ c_{22}^{-m} \, (c_{13} \, c_{22})^{m_1} } \right)^{d_0} = c_{31}^{c_{22}^{-m}   \,(c_{22} c_{13})^{m_1}};
$$

if $m = 2 m_1 + 1$ is odd, then
$$
\left(  c_{31}^{ c_{22}^{-(m+1)} \, (c_{22} \, c_{13})^{m_1} c_{22}^2} \right)^{d_1} = c_{31}^{c_{22}^{-(m-1)}   \,(c_{13} c_{22})^{m_1}}.
$$
\end{lem}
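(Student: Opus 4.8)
The plan is to substitute into relation~1), namely $c_{31}^{d_m}=c_{31}$, the explicit expressions for the $d_m$ with $|m|\ge 2$ as words in $d_0$ (for $m$ even), respectively $d_1$ (for $m$ odd), together with $c_{22}$ and $c_{13}$ --- exactly the expressions displayed just before Lemmas~\ref{l4.3} and~\ref{l4.4} --- and then to simplify the resulting conjugating words. Because relations~3) rewrite every $d_m$ with $|m|\ge 2$ as such a word, trading relation~1) for the relations it imposes on $d_0$ and $d_1$ is an equivalence of relation sets; this is precisely the analogue of the reduction already carried out for relation~2) in the two preceding lemmas, so the only substantive step is the algebraic collapse of the prefix and suffix of $d_m$.

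First I would treat the even indices $m=2m_1$. Writing the stored expression as $d_m=c_{22}^{-m_1}\,(\text{prefix})\,d_0\,(\text{suffix})\,c_{22}^{m_1}$ and expanding each factor $c_{13}^{c_{22}^{j}}=c_{22}^{-j}c_{13}c_{22}^{j}$, both products telescope once the outer powers of $c_{22}$ are carried inside, so that the part of $d_m$ before $d_0$ collapses to $P$ and the part after $d_0$ to $Q$, where
\[
P=c_{22}^{-m}(c_{13}c_{22})^{m_1},\qquad Q=(c_{22}c_{13})^{-m_1}c_{22}^{m}.
\]
Then $c_{31}^{d_m}=c_{31}$ becomes $(c_{31}^{P})^{d_0}=c_{31}^{Q^{-1}}$, and since $Q^{-1}=c_{22}^{-m}(c_{22}c_{13})^{m_1}$ this is exactly the even relation of the statement (the value $m_1=0$ returning relation~1) for $m=0$).

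The odd indices $m=2m_1+1$ are handled in the same way, now isolating $d_1$: the identical telescoping produces a prefix $P=c_{22}^{-(m+1)}(c_{22}c_{13})^{m_1}c_{22}^{2}$ and a suffix $Q$ with $Q^{-1}=c_{22}^{-(m-1)}(c_{13}c_{22})^{m_1}$, whence $c_{31}^{d_m}=c_{31}$ yields $(c_{31}^{P})^{d_1}=c_{31}^{Q^{-1}}$, the odd relation. Each manipulation is reversible, so these relations on $d_0$ and $d_1$, together with relations~3), reconstitute relation~1) for every $m$, giving the claimed equivalence. I expect the one real obstacle to be bookkeeping: checking that the telescoping formulas for $P$ and $Q$ hold uniformly when $m_1<0$, where the displayed products run in the opposite order and carry $c_{13}^{-1}$ in place of $c_{13}$, so that a single closed form covers both signs of $m_1$. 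A short induction on $|m_1|$ via relation~3) --- consistent, for instance, with $d_{-2}=c_{22}c_{13}^{-1}\,d_0\,c_{22}c_{13}c_{22}^{-2}$ --- settles this.
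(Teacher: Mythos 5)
Your proposal is correct and follows exactly the paper's (essentially unwritten) argument: the paper likewise disposes of relations~1) by substituting the expressions for $d_m$ obtained from relations~3) into $c_{31}^{d_m}=c_{31}$ and collapsing the conjugating words, so that $d_m = P\,d_0\,Q$ (even case) or $P\,d_1\,Q$ (odd case) turns relation~1) into $\left(c_{31}^{P}\right)^{d_0}=c_{31}^{Q^{-1}}$, respectively $\left(c_{31}^{P}\right)^{d_1}=c_{31}^{Q^{-1}}$. Your telescoped forms of $P$ and $Q^{-1}$ agree with the lemma in both parities and both signs of $m_1$ (your check $d_{-2}=c_{22}c_{13}^{-1}\,d_0\,c_{22}c_{13}c_{22}^{-2}$ is consistent with relation~3)), and your remark that the substitutions are reversible supplies the equivalence the paper leaves implicit.
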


Hence we have proven

\begin{prop}
$Ker (\varphi)$ is generated by
$$
c_{31},~~c_{22},~~c_{13},~~d_0,~~d_1
$$
and is defined by relations from Lemmas \ref{l4.3} - \ref{l4.5}.
\end{prop}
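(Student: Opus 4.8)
The plan is to derive the stated presentation from the one in the previous Lemma by Tietze transformations that eliminate every generator $d_m$ with $m\neq 0,1$. The starting presentation has generating set $\{c_{31},c_{22},c_{13}\}\cup\{d_m:m\in\mathbb{Z}\}$ together with the three families of relations 1), 2), 3).

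First I would use family 3) to carry out the elimination. Reading the relation $d_{m+2}=c_{22}^{-1}c_{13}^{c_{22}^{m+1}}d_m c_{13}^{-c_{22}^m}c_{22}$ forwards expresses $d_{m+2}$ through $d_m$, and reading it backwards expresses $d_m$ through $d_{m+2}$; iterating these two rewritings produces, for each $j\neq 0,1$, an explicit word for $d_j$ in $d_0,d_1,c_{31},c_{22},c_{13}$, namely the even and odd formulas displayed just before Lemmas \ref{l4.3} and \ref{l4.4}. The essential bookkeeping point is that the relations of family 3), indexed by $m\in\mathbb{Z}$, are in bijection with the generators to be removed: the relation at index $m\geq 0$ eliminates $d_{m+2}$ (covering all indices $\geq 2$), while the relation at index $m\leq -1$ eliminates $d_m$ (covering all indices $\leq -1$). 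Hence every relation in family 3) is spent removing exactly one generator, and none of them is left over.

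After the elimination the only surviving relations are the images of families 1) and 2) under the substitution. Feeding the even and odd expressions for $d_m$ into family 2) reproduces exactly the relations of Lemma \ref{l4.3} (even $m=2m_1$) and Lemma \ref{l4.4} (odd $m=2m_1+1$), and feeding them into family 1) reproduces the relations of Lemma \ref{l4.5}. Since those rewritings have already been established, it remains only to assemble them, giving the presentation of $Ker(\varphi)$ on generators $c_{31},c_{22},c_{13},d_0,d_1$ with defining relations from Lemmas \ref{l4.3}--\ref{l4.5}.

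The step I expect to be the main obstacle is not any algebra but the verification that the index assignment $m\mapsto$ (generator eliminated) is a genuine bijection onto $\mathbb{Z}\setminus\{0,1\}$, so that family 3) is exhausted without leaving a spurious relation among the five surviving generators. Once that bijection is confirmed the substitution is purely formal, and the three lemmas supply all the rewritten relations.
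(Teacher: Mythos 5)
Your proposal is correct and follows essentially the same route as the paper: the paper likewise uses the relations of family 3) to express every $d_m$ with $m\neq 0,1$ as a word in $d_0,d_1,c_{31},c_{22},c_{13}$ (the even/odd formulas displayed before Lemmas \ref{l4.3} and \ref{l4.4}), substitutes these into families 2) and 1) to obtain Lemmas \ref{l4.3}--\ref{l4.5}, and concludes the presentation. Your explicit bookkeeping that family 3) is consumed bijectively by the eliminations (index $m\geq 0$ removing $d_{m+2}$, index $m\leq -1$ removing $d_m$) is left implicit in the paper but is exactly the point that makes the Tietze argument go through.
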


Now we are going to prove that $Ker (\varphi)$ is two consequent HNN-extensions of the group $T_3^c = \langle c_{31}, c_{22}, c_{13} \rangle$. For this define subgroups $A_0, B_0, A_1, B_1$ of $G$. Let $m=2m_1$ is even number, then $A_0$ is generated by elements
$$
c_{13}^{ (c_{13} c_{22})^{m_1}},~~ c_{31}^{ c_{22}^{-m} (c_{13} c_{22})^{m_1}};
$$
$B_0$ is generated by elements
$$
c_{13}^{ c_{22}^m c_{31}^{-1} c_{22}^{-m} (c_{22} c_{13})^{m_1}},~~ c_{31}^{ c_{22}^{-m} (c_{22} c_{13})^{m_1}}.
$$
Define a map $\psi_0 : A_0 \to B_0$ on the generators:
$$
c_{13}^{ (c_{13} c_{22})^{m_1}} \to c_{13}^{ c_{22}^m c_{31}^{-1} c_{22}^{-m} (c_{22} c_{13})^{m_1}},~~~~ c_{31}^{ c_{22}^{-m} (c_{13} c_{22})^{m_1}} \to c_{31}^{ c_{22}^{-m} (c_{22} c_{13})^{m_1}}.
$$
From Lemmas \ref{l4.3},  \ref{l4.5} follows that $\psi_0$ is induced  conjugation by $d_0$ in $Ker (\varphi)$ and hence is an isomorphism.

Analogously, let $m=2m_1 + 1$ is odd number, then $A_1$ is generated by elements
$$
c_{13}^{ (c_{22} c_{13})^{(m_1-1)} c_{22}^2},~~ c_{31}^{ c_{22}^{-(m+1)} (c_{22} c_{13})^{m_1} c_{22}^2};
$$
$B_1$ is generated by elements
$$
c_{13}^{ c_{22}^m c_{31}^{-1} c_{22}^{-(m-1)} (c_{13} c_{22})^{m_1}},~~ c_{13}^{ c_{22}^{-(m-1)} (c_{13} c_{22})^{m_1}}.
$$
Define a map $\psi_1 : A_1 \to B_1$ on the generators:
$$
c_{13}^{ (c_{22} c_{13})^{(m_1-1)} c_{22}^2}
 \to c_{13}^{ c_{22}^m c_{31}^{-1} c_{22}^{-(m-1)} (c_{13} c_{22})^{m_1}},~~~~ c_{31}^{ c_{22}^{-(m+1)} (c_{22} c_{13})^{m_1} c_{22}^2} \to c_{13}^{ c_{22}^{-(m-1)} (c_{13} c_{22})^{m_1}}.
$$
From Lemmas \ref{l4.4},  \ref{l4.5} follows that $\psi_1$ is induced  conjugation by $d_1$ in $Ker (\varphi)$ and hence is an isomorphism. In these notations we have

\begin{thm} \label{t4.7}
$Ker (\varphi)$ is two consequent HNN-extensions with the base group $T_3^c$:
$$
Ker (\varphi) = \langle T_3^c, d_0, d_1~||~d_0^{-1} A_0 d_0 = B_0,  \psi_0; ~~d_1^{-1} A_1 d_1 = B_1, \psi_1 \rangle.
$$
\end{thm}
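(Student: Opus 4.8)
The plan is to identify the presentation of $\mathrm{Ker}(\varphi)$ supplied by the preceding proposition with the standard presentation of the iterated HNN-extension on the right-hand side, and thereby read off the isomorphism. Recall that an HNN-extension $\langle B, t \mid t^{-1} A t = C, \psi \rangle$ of a base group $B$ with associated isomorphism $\psi \colon A \to C$ between subgroups $A, C \leq B$ is presented by the generators and relations of $B$ together with one new generator $t$ and the stable-letter relations $t^{-1} a\, t = \psi(a)$, one for each element $a$ of a chosen generating set of $A$; iterating with two stable letters $d_0, d_1$ simply superposes the two families of stable-letter relations over the common base $T_3^c = \langle c_{31}, c_{22}, c_{13} \rangle$.

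First I would match the relation families. By the preceding proposition, $\mathrm{Ker}(\varphi)$ is generated by $c_{31}, c_{22}, c_{13}, d_0, d_1$ with defining relations exactly those of Lemmas \ref{l4.3}, \ref{l4.4}, \ref{l4.5}. Every such relation has the shape $d_i^{-1} w\, d_i = w'$ with $w, w'$ words in $c_{31}, c_{22}, c_{13}$ alone, with $i = 0$ for the even-index relations (Lemma \ref{l4.3} and the even case of Lemma \ref{l4.5}) and $i = 1$ for the odd-index relations (Lemma \ref{l4.4} and the odd case of Lemma \ref{l4.5}). Reading the chosen generating sets of $A_0$ and $A_1$ off their definitions—indexed by $m_1 \in \mathbb{Z}$—one sees that the $d_0$-family is precisely $\{\, d_0^{-1} a\, d_0 = \psi_0(a) : a \text{ a generator of } A_0 \,\}$ and the $d_1$-family is precisely $\{\, d_1^{-1} a\, d_1 = \psi_1(a) : a \text{ a generator of } A_1 \,\}$. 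Crucially, no defining relation involves only $c_{31}, c_{22}, c_{13}$, so the base group appearing in the HNN presentation is exactly $T_3^c$ carrying whatever relations it has as the subgroup $\langle c_{31}, c_{22}, c_{13}\rangle \leq \mathrm{Ker}(\varphi)$.

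The hard part is to confirm that the object on the right is a legitimate HNN-extension, i.e. that $\psi_0 \colon A_0 \to B_0$ and $\psi_1 \colon A_1 \to B_1$ are genuine group isomorphisms and not merely maps prescribed on generators. This is where the remark following the definitions does the real work: each $\psi_i$ is the restriction to $A_i$ of conjugation by $d_i$ inside the concretely given group $\mathrm{Ker}(\varphi)$. Since conjugation by $d_i$ is an automorphism of $\mathrm{Ker}(\varphi)$ and the relations of Lemmas \ref{l4.3}--\ref{l4.5} give $d_i^{-1} A_i\, d_i = B_i$, this restriction is an isomorphism of $A_i$ onto $B_i$; there is no circularity, since $A_i$ and $B_i$ are subgroups of the already-constructed group $\mathrm{Ker}(\varphi)$ and the HNN base is that same subgroup $T_3^c$.

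With $\psi_0, \psi_1$ confirmed to be isomorphisms, the iterated HNN-extension $H = \langle T_3^c, d_0, d_1 \mid d_0^{-1} A_0 d_0 = B_0, \psi_0;\ d_1^{-1} A_1 d_1 = B_1, \psi_1 \rangle$ is well-defined, and by the matching above its standard presentation has exactly the generators $c_{31}, c_{22}, c_{13}, d_0, d_1$ and exactly the relations of Lemmas \ref{l4.3}--\ref{l4.5}. To finish, I would write down the homomorphism $H \to \mathrm{Ker}(\varphi)$ fixing the names of the five generators: it is well-defined because all defining relations of $H$ hold in $\mathrm{Ker}(\varphi)$ (the base relations trivially, the stable-letter relations by the Lemmas), it is surjective because those five elements generate $\mathrm{Ker}(\varphi)$, and the identity map on generators furnishes an inverse because $\mathrm{Ker}(\varphi)$ has the identical defining relations. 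Hence $H \cong \mathrm{Ker}(\varphi)$, which is the assertion.
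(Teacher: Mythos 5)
Your proposal is correct and follows essentially the same route as the paper: the paper also deduces the theorem by observing that every defining relation of $\Ker(\varphi)$ (from Lemmas \ref{l4.3}--\ref{l4.5}) is a stable-letter relation $d_i^{-1} a\, d_i = \psi_i(a)$, and that $\psi_0, \psi_1$ are isomorphisms because they are restrictions of conjugation by $d_0, d_1$ inside the already-constructed group $\Ker(\varphi)$. Your write-up merely makes explicit the final identification of the two presentations via mutually inverse homomorphisms, which the paper leaves implicit.
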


\begin{cor}
The group $T_3^c = \langle c_{31}, c_{22}, c_{13} \rangle$ is free of rank 3.
\end{cor}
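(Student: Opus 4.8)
The plan is to read freeness off directly from the HNN-presentation in Theorem~\ref{t4.7}, so that no further computation is needed. Recall that the Reidemeister--Schreier procedure carried out above produced a \emph{complete} presentation of $Ker(\varphi)$ on the generators $c_{31},c_{22},c_{13},d_0,d_1$, whose defining relations are exactly those listed in Lemmas~\ref{l4.3}--\ref{l4.5}. The key observation I would isolate is that every one of those relations contains a stable letter: each has the shape $d_0^{-1}\,w\,d_0=w'$ or $d_1^{-1}\,w\,d_1=w'$, with $w,w'$ words in $c_{31},c_{22},c_{13}$. In particular, no defining relation is a word in $c_{31},c_{22},c_{13}$ alone.

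First I would invoke Theorem~\ref{t4.7}, which organizes this presentation as a (two stable letter) HNN-extension with base group $T_3^c=\langle c_{31},c_{22},c_{13}\rangle$, associated subgroups $A_0,B_0,A_1,B_1$, and the isomorphisms $\psi_0,\psi_1$. Since none of the defining relations of $Ker(\varphi)$ lies in the base generators alone, the base group, as abstractly presented, is the free group on $c_{31},c_{22},c_{13}$, hence free of rank $3$. By the standard embedding theorem for HNN-extensions (Britton's Lemma), the base group embeds into $Ker(\varphi)$, and its image is precisely the subgroup $\langle c_{31},c_{22},c_{13}\rangle=T_3^c$. Therefore $T_3^c$ is free of rank $3$.

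As an independent check on the rank I would abelianize. Using the expressions $c_{31}=A_{34}^{-1}A_{24}^{-1}A_{14}^{-1}$, $c_{22}=A_{24}^{-1}A_{14}^{-1}A_{23}^{-1}A_{13}^{-1}$, $c_{13}=A_{14}^{-1}A_{13}^{-1}A_{12}^{-1}$ from the proof of Proposition~\ref{prop4.1}, the images of $c_{31},c_{22},c_{13}$ in the abelianization $H_1(P_4;\Z)\cong\Z^{6}$ (free abelian on the classes of the $A_{ij}$) are linearly independent, because $c_{31}$ is the only one of the three involving $A_{34}$, $c_{22}$ the only one involving $A_{23}$, and $c_{13}$ the only one involving $A_{12}$. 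Thus $T_3^c$ surjects onto $\Z^3$, which confirms that its rank cannot drop below $3$ and, together with freeness on the three base generators, pins it at exactly $3$.

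The genuine content sits entirely in Theorem~\ref{t4.7}, which I am allowed to assume: the delicate point discharged there is that the relations produced by Reidemeister--Schreier package \emph{exactly} into HNN form, with $\psi_0$ and $\psi_1$ honest isomorphisms of the edge groups. Granting that, the Corollary is immediate, and the hard part is already behind us. It is worth stressing why this HNN route is what makes the argument work: a priori $T_3^c$ is only visibly free-by-free, since the projection $P_4\to P_3$ deleting the last strand gives a short exact sequence with free kernel $T_3^c\cap U_4$ and free quotient $T_2^c$, and such an extension need not be free. It is precisely the absence of any base-only relation in the complete HNN-presentation that certifies the freeness of $T_3^c$.
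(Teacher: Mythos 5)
Your proof is correct and takes essentially the same route as the paper: the paper's own argument is precisely that, by Theorem~\ref{t4.7}, every defining relation of $Ker(\varphi)$ is an HNN relation involving a stable letter, so the base group $T_3^c$ carries no relations of its own and embeds in $Ker(\varphi)$ as a free group of rank~3 (the Britton's Lemma step you spell out is left implicit in the paper). Your supplementary abelianization check of the rank via $H_1(P_4;\Z)\cong\Z^6$ is a harmless extra not present in the paper's proof.
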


\begin{proof}
The group $T_3^c$ is a subgroup of $Ker (\varphi)$. From Theorem \ref{t4.7} follows that all relations of $Ker (\varphi)$ are define $Ker (\varphi)$ as HNN-extensions, hence $T_3^c$ does not have defining relations.
\end{proof}

\section{Structure of $VP_3$ and $VP_4$} \label{s4}

The main purpose of this section find  sets of defining relations for $T_2$ and $T_3$. Note that $VP_3$ contains $T_2$ and has no commutativity relations,
$VP_4$ contains $T_3$ and has commutativity relations,
Relations of $T_n$ for $n > 3$ one can find using degeneracy maps $s_i$.

\subsection{The group $VP_3$}
In the generators
$$
\lambda_{12}, \lambda_{21}, \lambda_{13}, \lambda_{23}, \lambda_{31}, \lambda_{23},
$$
$VP_3$ is defined by the following 6 relations
$$
\lambda_{12} \lambda_{13} \lambda_{23} = \lambda_{23} \lambda_{13} \lambda_{12},~~~
\lambda_{21} \lambda_{23} \lambda_{13} = \lambda_{13} \lambda_{23} \lambda_{21},~~~
\lambda_{13} \lambda_{12} \lambda_{32} = \lambda_{32} \lambda_{12} \lambda_{13},
$$
$$
\lambda_{31} \lambda_{32} \lambda_{12} = \lambda_{12} \lambda_{32} \lambda_{31},~~~
\lambda_{23} \lambda_{21} \lambda_{31} = \lambda_{31} \lambda_{21} \lambda_{23},~~~
\lambda_{32} \lambda_{31} \lambda_{21} = \lambda_{21} \lambda_{31} \lambda_{32}.
$$

In the generators
$$
a_{11}, b_{11}, a_{21}, a_{12}, b_{21}, b_{12}
$$
$VP_3$ is defined by the following 6 relations
$$
[a_{21}, a_{12}] = 1,~~~b_{11} a_{11} a_{21} a_{11}^{-1} = a_{21} b_{11},~~~a_{12} b_{21} b_{12}^{-1} b_{11} = b_{21} b_{12}^{-1} b_{11} a_{11} a_{12} a_{11}^{-1},
$$
$$
b_{11}^{-1} b_{21} b_{11} a_{11} = a_{11} b_{21},~~~a_{11} a_{12}^{-1} a_{21} b_{12} = b_{11}^{-1} b_{12} b_{11} a_{11} a_{12}^{-1} a_{21},~~~
[b_{21}, b_{12}] = 1.
$$

In the paper \cite{BMVW} was found the following  decomposition of $VP_3$.

\begin{prop} \label{p4.1}
(\cite{BMVW}) The group $VP_3$ is generated by elements
$$
a_{11},~~ c_{11},~~ a_{21},~~ a_{12},~~
b_{21},~~ b_{12}
$$
and is defined by relations
$$
[a_{21}, a_{12}] = [b_{21}, b_{12}] = 1,
$$
$$
a_{21}^{c_{11}} = a_{21},~~~b_{21}^{c_{11}} = b_{21},~~~b_{12}^{c_{11}} =
b_{12}^{a_{21}^{-1} a_{12}},~~~ a_{12}^{c_{11}} = a_{12}^{b_{12}
a_{21}^{-1} a_{12} b_{21}^{-1}},
$$
i.~e. $VP_3 = \langle T_2, c_{11} \rangle * \langle a_{11} \rangle$,
$\langle T_2, c_{11} \rangle = T_2 \leftthreetimes \langle c_{11} \rangle.$
\end{prop}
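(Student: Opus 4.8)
The plan is to derive the presentation by Tietze transformations from the standard presentation of $VP_3$ on the generators $\lambda_{ij}$, $1\le i\ne j\le 3$, which carries only the six relations of type (\ref{relation}) (there are no commuting relations (\ref{rel}), since three strands never supply four distinct indices). First I would set up the two-way dictionary between the $\lambda_{ij}$ and the cabled generators. Specializing (\ref{equation3.3})--(\ref{equation3.4}) to $n=3$ and using the base cases $a_{11}=\lambda_{12}$, $b_{11}=\lambda_{21}$ coming from $T_1=VP_2$, I obtain
$$\lambda_{12}=a_{11},\quad \lambda_{21}=b_{11},\quad \lambda_{13}=a_{12}a_{11}^{-1},\quad \lambda_{23}=a_{11}a_{12}^{-1}a_{21},\quad \lambda_{31}=b_{11}^{-1}b_{12},\quad \lambda_{32}=b_{21}b_{12}^{-1}b_{11},$$
and, reading the definitions the other way, $a_{12}=\lambda_{13}\lambda_{12}$, $a_{21}=\lambda_{13}\lambda_{23}$, $b_{12}=\lambda_{21}\lambda_{31}$, $b_{21}=\lambda_{32}\lambda_{31}$. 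These mutual expressions show that the two generating sets coincide (cf.\ Proposition~\ref{p3.3}(2)), so Tietze's theorem lets me replace the $\lambda$'s throughout by $a_{11},b_{11},a_{21},a_{12},b_{21},b_{12}$.

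The core step is to push the six formulas for $\lambda_{ij}$ through the six defining relations. I expect a clean relation-by-relation correspondence: $\lambda_{12}\lambda_{13}\lambda_{23}=\lambda_{23}\lambda_{13}\lambda_{12}$ collapses to $[a_{21},a_{12}]=1$, its mirror $\lambda_{32}\lambda_{31}\lambda_{21}=\lambda_{21}\lambda_{31}\lambda_{32}$ to $[b_{21},b_{12}]=1$, and $\lambda_{21}\lambda_{23}\lambda_{13}=\lambda_{13}\lambda_{23}\lambda_{21}$ to $b_{11}a_{11}a_{21}a_{11}^{-1}=a_{21}b_{11}$; the remaining three relations yield the remaining three relations of the intermediate presentation displayed just before the statement. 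Each simplification uses only the two commutator relations already obtained, so the transformed set is equivalent to the original. Substituting $b_{11}=c_{11}a_{11}^{-1}$ (recall $c_{11}=b_{11}a_{11}$) then converts $b_{11}a_{11}a_{21}a_{11}^{-1}=a_{21}b_{11}$ into $a_{21}^{c_{11}}=a_{21}$ and $b_{11}^{-1}b_{21}b_{11}a_{11}=a_{11}b_{21}$ into $b_{21}^{c_{11}}=b_{21}$, and turns the two mixed relations into the conjugation formulas for $b_{12}^{c_{11}}$ and $a_{12}^{c_{11}}$.

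With the presentation in hand the decompositions are read off. The generator $a_{11}$ occurs in no relation, so it splits off as a free factor: $VP_3=\langle T_2,c_{11}\rangle*\langle a_{11}\rangle$ with $\langle a_{11}\rangle\cong\Z$. The surviving relations present $T_2=\langle a_{21},a_{12},b_{21},b_{12}\mid[a_{21},a_{12}]=[b_{21},b_{12}]=1\rangle$ augmented by rules expressing each $g^{c_{11}}$ as a word in $T_2$. To conclude $\langle T_2,c_{11}\rangle=T_2\leftthreetimes\langle c_{11}\rangle$ I would verify that $T_2$ is normalized by $c_{11}$ and that $g\mapsto g^{c_{11}}$ restricts to an automorphism of $T_2$ (well defined because it respects the two commutator relations, and invertible), while the homomorphism $VP_3\to\Z$ sending $c_{11}\mapsto 1$ and all $T_2$-generators to $0$ shows $\langle c_{11}\rangle\cong\Z$ meets $T_2$ trivially.

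The main difficulty I anticipate is computational rather than conceptual: carrying out the substitutions without ordering errors and, above all, massaging the two mixed relations into the precise asymmetric forms $b_{12}^{c_{11}}=b_{12}^{a_{21}^{-1}a_{12}}$ and $a_{12}^{c_{11}}=a_{12}^{b_{12}a_{21}^{-1}a_{12}b_{21}^{-1}}$. The $a_{12}$ relation surfaces most naturally in the guise $a_{12}^{c_{11}^{-1}}=a_{12}^{b_{21}b_{12}^{-1}}$, so recovering the stated form means inverting the $c_{11}$-action and rewriting the conjugating word with the help of $[b_{21},b_{12}]=1$; this is where I expect the bookkeeping to be most delicate.
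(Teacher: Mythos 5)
Your Tietze-transformation core is correct, and it is essentially how this result is obtained in the source: the paper itself does not prove Proposition~\ref{p4.1} (it quotes it from \cite{BMVW}), but it records exactly your two ingredients --- the dictionary $\lambda_{12}=a_{11}$, $\lambda_{21}=b_{11}$, $\lambda_{13}=a_{12}a_{11}^{-1}$, $\lambda_{23}=a_{11}a_{12}^{-1}a_{21}$, $\lambda_{31}=b_{11}^{-1}b_{12}$, $\lambda_{32}=b_{21}b_{12}^{-1}b_{11}$, and the intermediate presentation in $a_{11},b_{11},a_{21},a_{12},b_{21},b_{12}$ displayed at the start of Section~5.1. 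I checked your relation-by-relation collapse, the substitution $b_{11}=c_{11}a_{11}^{-1}$, and the massaging of $a_{12}^{c_{11}^{-1}}=a_{12}^{b_{21}b_{12}^{-1}}$ into $a_{12}^{c_{11}}=a_{12}^{b_{12}a_{21}^{-1}a_{12}b_{21}^{-1}}$ (it goes through using $b_{12}^{c_{11}}=b_{12}^{a_{21}^{-1}a_{12}}$, $b_{21}^{c_{11}}=b_{21}$ and $[a_{21},a_{12}]=1$); the free-factor splitting $VP_3=\langle T_2,c_{11}\rangle*\langle a_{11}\rangle$ is then immediate since $a_{11}$ occurs in no relator.

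The genuine gap is your justification of $\langle T_2,c_{11}\rangle=T_2\leftthreetimes\langle c_{11}\rangle$: the map $g\mapsto g^{c_{11}}$ on generators is \emph{not} ``well defined because it respects the two commutator relations'' --- it does not respect them. In the two-relator group $\langle a_{21},a_{12}\rangle*\langle b_{21},b_{12}\rangle\cong\Z^2*\Z^2$, the centralizer of $a_{21}$ is the factor $\langle a_{21},a_{12}\rangle$, whereas the image $a_{12}^{b_{12}a_{21}^{-1}a_{12}b_{21}^{-1}}$ has free-product normal form of syllable length $7$ beginning and ending with syllables from the other factor; hence $[a_{21},a_{12}^{b_{12}a_{21}^{-1}a_{12}b_{21}^{-1}}]\neq 1$ there, so the prescription does not even give an endomorphism of that group. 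This is not a pedantic point: conjugating $[a_{21},a_{12}]=1$ by $c_{11}$ forces precisely this new relation inside $T_2$, so the natural surjection from the two-relator group onto the subgroup $T_2\leq VP_3$ is \emph{not} injective --- which is exactly why the paper's subsequent propositions must present $T_2$ by the infinite family $[a_{21},a_{12}]^{c_{11}^k}=[b_{21},b_{12}]^{c_{11}^k}=1$, $k\in\Z$, rather than by two relations. The decomposition in the statement refers to the actual subgroup $T_2$, and the correct route is the rest of your own outline with the faulty parenthesis deleted: show $T_2$ is normal in $\langle T_2,c_{11}\rangle$ by expressing both $x^{c_{11}}$ and $x^{c_{11}^{-1}}$ as words in $a_{21},a_{12},b_{21},b_{12}$ for each generator $x$ (you already have $a_{12}^{c_{11}^{-1}}=a_{12}^{b_{21}b_{12}^{-1}}$; similarly $b_{12}^{c_{11}^{-1}}=b_{12}^{(a_{12}^{b_{21}b_{12}^{-1}})^{-1}a_{21}}$, and $a_{21},b_{21}$ are central for this action); conjugation by $c_{11}$ is then automatically an automorphism of the normal subgroup $T_2$, and your retraction onto $\langle c_{11}\rangle\cong\Z$ gives $T_2\cap\langle c_{11}\rangle=1$, completing the semidirect product.
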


In this proposition $c_{11}$ acts on $b_{12}$ and $a_{12}$ by different manner. Let us show that in fact these actions are equel. Indeed, since $a_{21}^{-1} a_{12} = a_{12} a_{21}^{-1}$, then
$$
a_{12}^{c_{11}} = a_{12}^{b_{12} a_{12} a_{21}^{-1} b_{21}^{-1}} \Leftrightarrow
a_{12}^{c_{11}} = a_{12}^{c_{12} c_{21}^{-1}}.
$$

Similarly, rewrite the conjugation rule
$$
b_{12}^{c_{11}} =
b_{12}^{a_{21}^{-1} a_{12}}
$$
in the form
$$
b_{12}^{c_{11}} =
b_{12}^{b_{12}  a_{12} a_{21}^{-1}}.
$$
Conjugating both  sides of this relation by $b_{21}^{-1}$ and using the fact that $c_{11}  b_{21}^{-1} = b_{21}^{-1} c_{11}$ and $b_{12}  b_{21} = b_{21} b_{12}$, we get
$$
b_{12}^{c_{11}} =
b_{12}^{c_{12} c_{21}^{-1}}.
$$
Hence, we have proven

\begin{cor} \label{c4.2}
The group $VP_3$ is generated by elements
$$
a_{11},~~ c_{11},~~ a_{21},~~ a_{12},~~
b_{21},~~ b_{12}
$$
and is defined by relations
$$
[a_{21}, a_{12}] = [b_{21}, b_{12}] = 1,
$$
$$
a_{21}^{c_{11}} = a_{21},~~~b_{21}^{c_{11}} = b_{21},~~~b_{12}^{c_{11}} =
b_{12}^{c_{12} c_{21}^{-1}},~~~ a_{12}^{c_{11}} = a_{12}^{c_{12} c_{21}^{-1}}.
$$
\end{cor}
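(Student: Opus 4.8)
The plan is to deduce the corollary directly from Proposition~\ref{p4.1} by a Tietze transformation that rewrites only the two conjugation rules involving $a_{12}$ and $b_{12}$. The generating set is unchanged, and the four relations $[a_{21},a_{12}]=[b_{21},b_{12}]=1$, $a_{21}^{c_{11}}=a_{21}$, $b_{21}^{c_{11}}=b_{21}$ appear verbatim in both presentations, so nothing need be checked for them. The single algebraic fact driving everything is the identity $c_{12}c_{21}^{-1}=b_{12}a_{12}a_{21}^{-1}b_{21}^{-1}$, which follows at once from the definitions $c_{12}=b_{12}a_{12}$ and $c_{21}=b_{21}a_{21}$. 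Thus, working modulo the four shared relations, it suffices to show that the conjugating word $b_{12}a_{21}^{-1}a_{12}b_{21}^{-1}$ attached to $a_{12}$, and the word $a_{21}^{-1}a_{12}$ attached to $b_{12}$, each reduce to $c_{12}c_{21}^{-1}$.

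For $a_{12}$ the step is immediate. Since $[a_{21},a_{12}]=1$ one may write $a_{21}^{-1}a_{12}=a_{12}a_{21}^{-1}$, so the conjugating element $b_{12}a_{21}^{-1}a_{12}b_{21}^{-1}$ of Proposition~\ref{p4.1} becomes $b_{12}a_{12}a_{21}^{-1}b_{21}^{-1}=c_{12}c_{21}^{-1}$, yielding $a_{12}^{c_{11}}=a_{12}^{c_{12}c_{21}^{-1}}$ as desired.

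The $b_{12}$ case is the only genuine obstacle, and it is where I expect the bookkeeping to require care: the conjugating word in Proposition~\ref{p4.1} is merely $a_{21}^{-1}a_{12}$, which is shorter than $c_{12}c_{21}^{-1}$, so one cannot simply reassemble the $c$-generators. My approach is first to prepend a harmless factor $b_{12}$: because $b_{12}^{b_{12}}=b_{12}$ and $[a_{21},a_{12}]=1$, conjugation by $b_{12}a_{12}a_{21}^{-1}$ has the same effect on $b_{12}$ as conjugation by $a_{21}^{-1}a_{12}$, giving $b_{12}^{c_{11}}=b_{12}^{b_{12}a_{12}a_{21}^{-1}}$. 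I then conjugate this entire relation by $b_{21}^{-1}$. On the right this produces the word $b_{12}a_{12}a_{21}^{-1}b_{21}^{-1}=c_{12}c_{21}^{-1}$, while on the left the base $b_{12}$ is unaffected because $[b_{21},b_{12}]=1$ and the exponent $c_{11}$ is preserved because $c_{11}$ commutes with $b_{21}$ (equivalently $b_{21}^{c_{11}}=b_{21}$). The delicate point is precisely that both commutations are needed at once to keep the left-hand side equal to $b_{12}^{c_{11}}$; granting them, the relation becomes $b_{12}^{c_{11}}=b_{12}^{c_{12}c_{21}^{-1}}$, which together with the $a_{12}$ computation establishes the equivalence of the two presentations and hence proves the corollary.
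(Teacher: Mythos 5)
Your proposal is correct and follows essentially the same route as the paper: for $a_{12}$ you use $[a_{21},a_{12}]=1$ to rewrite the conjugator as $b_{12}a_{12}a_{21}^{-1}b_{21}^{-1}=c_{12}c_{21}^{-1}$, and for $b_{12}$ you prepend the harmless factor $b_{12}$, then conjugate the whole relation by $b_{21}^{-1}$ using both $[b_{21},b_{12}]=1$ and $b_{21}^{c_{11}}=b_{21}$ — precisely the paper's argument. Nothing is missing.
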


Also, we can change  the generators $b_{ij}$ to the generators $c_{ij}$.

\begin{cor} \label{c4.3}
The group $VP_3$ is generated by elements
$$
a_{11},~~ c_{11},~~ a_{21},~~ a_{12},~~
c_{21},~~ c_{12}
$$
and is defined by relations
$$
[a_{21}, a_{12}] = [c_{21} a_{21}^{-1}, c_{12} a_{12}^{-1}] = 1,
$$
$$
a_{21}^{c_{11}} = a_{21},~~~c_{21}^{c_{11}} = c_{21},~~~a_{12}^{c_{11}} =
a_{12}^{c_{12} c_{21}^{-1}},~~~ c_{12}^{c_{11}} = c_{12}^{c_{21}^{-1}}.
$$
\end{cor}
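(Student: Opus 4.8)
The plan is to deduce this from Corollary~\ref{c4.2} by a Tietze transformation, replacing the generators $b_{21},b_{12}$ by the cabled generators $c_{21},c_{12}$. Recall that $c_{21}=b_{21}a_{21}$ and $c_{12}=b_{12}a_{12}$, so that $b_{21}=c_{21}a_{21}^{-1}$ and $b_{12}=c_{12}a_{12}^{-1}$; introducing $c_{21},c_{12}$ and eliminating $b_{21},b_{12}$ is therefore a legitimate change of generating set, and it remains only to rewrite the four conjugation relations and the two commutator relations of Corollary~\ref{c4.2} in the new generators.

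First I would dispose of the easy rewritings. The relation $[a_{21},a_{12}]=1$ is untouched, and $[b_{21},b_{12}]=1$ becomes $[c_{21}a_{21}^{-1},c_{12}a_{12}^{-1}]=1$ by direct substitution. The relation $a_{21}^{c_{11}}=a_{21}$ is untouched, and $a_{12}^{c_{11}}=a_{12}^{c_{12}c_{21}^{-1}}$ is kept verbatim, becoming the third conjugation relation of the claim. For $b_{21}^{c_{11}}=b_{21}$, substituting $b_{21}=c_{21}a_{21}^{-1}$ and distributing the conjugation gives $c_{21}^{c_{11}}(a_{21}^{c_{11}})^{-1}=c_{21}a_{21}^{-1}$; invoking the already-verified $a_{21}^{c_{11}}=a_{21}$ cancels the $a_{21}$-terms and leaves $c_{21}^{c_{11}}=c_{21}$, as required.

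The only relation whose rewriting carries any content is the last one, $c_{12}^{c_{11}}=c_{12}^{c_{21}^{-1}}$. Here the point is that conjugation distributes over products, so from $c_{12}=b_{12}a_{12}$ one gets $c_{12}^{c_{11}}=b_{12}^{c_{11}}a_{12}^{c_{11}}$. Both of the last two conjugation relations of Corollary~\ref{c4.2} assert that conjugation by $c_{11}$ coincides with conjugation by $c_{12}c_{21}^{-1}$ on $b_{12}$ and on $a_{12}$ respectively, so
$$
c_{12}^{c_{11}}=b_{12}^{\,c_{12}c_{21}^{-1}}\,a_{12}^{\,c_{12}c_{21}^{-1}}=(b_{12}a_{12})^{c_{12}c_{21}^{-1}}=c_{12}^{\,c_{12}c_{21}^{-1}}=c_{12}^{\,c_{21}^{-1}},
$$
the final equality because $c_{12}$ commutes with itself. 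Conversely, the relation $c_{12}^{c_{11}}=c_{12}^{c_{21}^{-1}}$ together with the retained relation $a_{12}^{c_{11}}=a_{12}^{c_{12}c_{21}^{-1}}$ recovers $b_{12}^{c_{11}}=b_{12}^{c_{12}c_{21}^{-1}}$ by running the same computation backwards on $b_{12}=c_{12}a_{12}^{-1}$. Since every relation of one presentation is thereby a consequence of the relations of the other, the two presentations define the same group, which is the assertion. The computation is entirely mechanical; the one step requiring a moment's thought is recognizing that the two separate actions of $c_{11}$ on $a_{12}$ and on $b_{12}$ amalgamate into a single clean action on their product $c_{12}$, after which the self-conjugation $c_{12}^{c_{12}}=c_{12}$ strips off the redundant $c_{12}$.
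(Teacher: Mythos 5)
Your proof is correct and takes essentially the same route as the paper: the paper obtains Corollary~\ref{c4.3} from Corollary~\ref{c4.2} precisely by the change of generators $b_{ij}=c_{ij}a_{ij}^{-1}$ (stating only ``we can change the generators $b_{ij}$ to the generators $c_{ij}$'' without further detail), which is exactly the Tietze transformation you carry out. Your write-up simply supplies the mechanical verification, in both directions, that the paper leaves implicit.
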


\medskip

To find a set of defining relations of $T_2$ consider a homomorphism $\varphi : \langle T_2, c_{11} \rangle \to \langle c_{11} \rangle$ which sends all generators of $T_2$ to $1$ and sends $c_{11}$ to $c_{11}$. To find the kernel of this homomorphism,
we are using the  Reidemeister-Schreier method \cite[Section 2.3]{MKS}. The kernel is generated by elements
$$
S_{\lambda,a} = \lambda a \cdot (\overline{\lambda a})^{-1},\quad \lambda
\in \langle c_{11} \rangle,\quad
 a \in \{ a_{12}, a_{21}, b_{12}, b_{21}, c_{11} \}
$$
that are equal to
$$
c_{11}^{-k} a_{12} c_{11}^k,~~c_{11}^{-k} a_{21} c_{11}^k,~~c_{11}^{-k} b_{12} c_{11}^k,~~c_{11}^{-k} b_{21} c_{11}^k,~~k \in \mathbb{Z}.
$$

Defining relations of $Ker(\varphi)$ have the form
$$
c_{11}^{-k} \tau (r) c_{11}^k,~~k \in \mathbb{Z},
$$
where $r$ is a defining relation of the group $\langle T_2, c_{11} \rangle$ and $\tau$ is the rewriteble prosess (see \cite[Section 2.3]{MKS}). If $r$ runs through defining relation which are the conjugation rules, then we can use these defining relations to remove all generators of $Ker (\varphi)$ and keep only four generators:
$$
a_{12}, a_{21}, b_{12}, b_{21},
$$
It means  that the kernel is equal to $T_2$.
Hence, we have only relations
$$
[a_{21}, a_{12}]^{c_{11}^k} = [b_{21}, b_{12}]^{c_{11}^k} = 1, ~~~k \in \mathbb{Z}.
$$
We  proved

\begin{prop}
The group $T_2$ is generated by elements $a_{12}, a_{21}, b_{12}, b_{21}$ and is defined by relations
$$
[a_{21}, a_{12}]^{c_{11}^k} = [b_{21}, b_{12}]^{c_{11}^k} = 1, ~~~k \in \mathbb{Z}.
$$
\end{prop}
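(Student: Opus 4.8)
The plan is to present $T_2$ as the kernel of a retraction and extract its presentation by Reidemeister--Schreier. By Proposition~\ref{p4.1} one has the split extension $\langle T_2, c_{11}\rangle = T_2 \leftthreetimes \langle c_{11}\rangle$, so $T_2$ is exactly the kernel of the homomorphism $\varphi\colon \langle T_2, c_{11}\rangle \to \langle c_{11}\rangle \cong \mathbb{Z}$ sending $a_{12}, a_{21}, b_{12}, b_{21}$ to $1$ and fixing $c_{11}$. I would start from the presentation of $\langle T_2, c_{11}\rangle$ in Corollary~\ref{c4.2}, whose relations split into the two commutativity relations $[a_{21},a_{12}]=[b_{21},b_{12}]=1$ and the four conjugation relations $a_{21}^{c_{11}}=a_{21}$, $b_{21}^{c_{11}}=b_{21}$, $a_{12}^{c_{11}}=a_{12}^{c_{12}c_{21}^{-1}}$, $b_{12}^{c_{11}}=b_{12}^{c_{12}c_{21}^{-1}}$.

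Next I would apply Reidemeister--Schreier with the Schreier transversal $\{c_{11}^{k} : k\in\mathbb{Z}\}$ of the cyclic image. The Schreier generator attached to $c_{11}$ is trivial, so the generators of $Ker(\varphi)$ are just the conjugates $c_{11}^{-k}\,a\,c_{11}^{k}$ for $a\in\{a_{12},a_{21},b_{12},b_{21}\}$ and $k\in\mathbb{Z}$, and the defining relations are the rewrites $\tau(c_{11}^{k}\,r\,c_{11}^{-k})$ of the six relations above. The heart of the argument is to use the four conjugation relations as Tietze eliminations: the rewrites of $a_{21}^{c_{11}}=a_{21}$ and $b_{21}^{c_{11}}=b_{21}$ collapse every conjugate of $a_{21}$ (resp.\ $b_{21}$) to the single generator $a_{21}$ (resp.\ $b_{21}$), while the rewrites of the relations for $a_{12}$ and $b_{12}$ relate conjugates of adjacent index, so that solving them recursively outward from $k=0$ expresses every conjugate of $a_{12}$ and $b_{12}$ as a word in $a_{12}, a_{21}, b_{12}, b_{21}$.

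Once this elimination is carried out the four conjugation relations are consumed, and the only relations that remain are the rewrites of the two commutators, namely $[a_{21},a_{12}]^{c_{11}^{k}}=1$ and $[b_{21},b_{12}]^{c_{11}^{k}}=1$ for all $k\in\mathbb{Z}$, which is the asserted presentation. The step I expect to be the main obstacle is the bookkeeping that makes this elimination \emph{clean}: one must verify that each conjugation relation removes exactly one Schreier generator, so that no spurious relation among the four surviving generators is introduced, and that the recursion on $k$ reaches $k=0$ from both the positive and the negative sides. This is the standard behaviour of a split extension with infinite cyclic quotient—the Reidemeister--Schreier presentation of the kernel is the $\langle c_{11}\rangle$-orbit of the relations not involving the conjugating letter, here the two commutators—but confirming it requires following the rewriting process carefully rather than invoking it abstractly.
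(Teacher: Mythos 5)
Your proposal is correct and follows essentially the same route as the paper: the paper likewise realizes $T_2$ as the kernel of the retraction $\varphi\colon \langle T_2, c_{11}\rangle \to \langle c_{11}\rangle$, applies Reidemeister--Schreier with the transversal $\{c_{11}^k : k\in\mathbb{Z}\}$, uses the rewrites of the four conjugation relations to eliminate all Schreier generators except $a_{12}, a_{21}, b_{12}, b_{21}$, and is left with exactly the conjugated commutator relations $[a_{21},a_{12}]^{c_{11}^k} = [b_{21},b_{12}]^{c_{11}^k} = 1$. The only difference is presentational: you spell out the index recursion in the elimination step, which the paper leaves implicit.
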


Using the conjugation rules in $VP_3$ one can prove

\begin{lem} \label{l4.12}
In $VP_3$ the following formulas hold
$$
a_{12}^{c_{11}^k} = a_{12}^{c_{12}^k c_{21}^{-k}},~~~b_{12}^{c_{11}^k} = b_{12}^{c_{12}^k c_{21}^{-k}},~~~k \in \mathbb{Z}.
$$
\end{lem}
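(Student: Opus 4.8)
The plan is to prove both identities by induction on $k$, treating $a_{12}$ and $b_{12}$ by one and the same argument, since both obey the identical base relation recorded in Corollaries~\ref{c4.2} and~\ref{c4.3}, namely $a_{12}^{c_{11}} = a_{12}^{c_{12}c_{21}^{-1}}$ and $b_{12}^{c_{11}} = b_{12}^{c_{12}c_{21}^{-1}}$. I would first record the two auxiliary conjugation rules of Corollary~\ref{c4.3}, $c_{21}^{c_{11}} = c_{21}$ and $c_{12}^{c_{11}} = c_{12}^{c_{21}^{-1}}$, and note their iterated forms: since conjugation by $c_{11}$ is an automorphism, $(c_{21}^{j})^{c_{11}} = c_{21}^{j}$ and $(c_{12}^{j})^{c_{11}} = c_{21}\,c_{12}^{j}\,c_{21}^{-1}$ for every integer $j$. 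The base case $k=1$ of the lemma is exactly the stated relation.

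For the inductive step in the positive direction I would assume $a_{12}^{c_{11}^{k}} = a_{12}^{c_{12}^{k}c_{21}^{-k}}$ and conjugate both sides by $c_{11}$. Writing the right-hand side as the word $c_{21}^{k}c_{12}^{-k}\,a_{12}\,c_{12}^{k}c_{21}^{-k}$ and using that conjugation by $c_{11}$ is a homomorphism, each factor is replaced by its $c_{11}$-conjugate according to the rules above: the powers $c_{21}^{\pm k}$ are unchanged, each $c_{12}^{\pm k}$ becomes $c_{21}\,c_{12}^{\pm k}\,c_{21}^{-1}$, and the central $a_{12}$ becomes $c_{21}c_{12}^{-1}a_{12}c_{12}c_{21}^{-1}$. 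The resulting word telescopes: the inner $c_{21}^{\pm 1}$ introduced around each factor cancel against their neighbours, leaving precisely $c_{21}^{k+1}c_{12}^{-(k+1)}a_{12}\,c_{12}^{k+1}c_{21}^{-(k+1)} = a_{12}^{c_{12}^{k+1}c_{21}^{-(k+1)}}$, which closes the induction for $k\geq 0$.

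For negative $k$ it suffices to establish the case $k=-1$ and run the symmetric downward induction. I would obtain $a_{12}^{c_{11}^{-1}} = a_{12}^{c_{12}^{-1}c_{21}}$ by inverting the automorphism $\phi$ given by conjugation by $c_{11}$: from $\phi(c_{12}) = c_{21}c_{12}c_{21}^{-1}$ and $\phi(c_{21}) = c_{21}$ one reads off $\phi^{-1}(c_{12}) = c_{21}^{-1}c_{12}c_{21}$ and $\phi^{-1}(c_{21})=c_{21}$, and then applying $\phi^{-1}$ to the base relation $\phi(a_{12}) = c_{21}c_{12}^{-1}a_{12}c_{12}c_{21}^{-1}$ yields $\phi^{-1}(a_{12}) = a_{12}^{c_{12}^{-1}c_{21}}$, exactly the formula at $k=-1$. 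The downward induction then mirrors the upward one with $c_{11}^{-1}$ in place of $c_{11}$ and the inverted conjugation rules. The entire argument applies verbatim to $b_{12}$.

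The computation is routine and I do not anticipate a genuine obstacle; the only places demanding care are the bookkeeping of the telescoping cancellation in the inductive step and keeping the sign conventions straight when inverting $\phi$ for negative exponents.
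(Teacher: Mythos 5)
Your proof is correct and follows exactly the route the paper intends: Lemma~\ref{l4.12} is stated there without proof (``using the conjugation rules in $VP_3$ one can prove''), and your induction on $k$, using $c_{21}^{c_{11}}=c_{21}$, $c_{12}^{c_{11}}=c_{12}^{c_{21}^{-1}}$ and the base relations $a_{12}^{c_{11}}=a_{12}^{c_{12}c_{21}^{-1}}$, $b_{12}^{c_{11}}=b_{12}^{c_{12}c_{21}^{-1}}$, is precisely the routine verification the authors leave to the reader. The telescoping cancellation and the inversion of the conjugation automorphism for negative $k$ are both handled correctly, so nothing is missing.
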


Using these formulas we can give other description of $T_2$.

\begin{prop} \label{l4.13}
 $T_2$ is generated by elements
$$
a_{21}, a_{12}, b_{21}, b_{12}
$$
and is defined by the relations
$$
[a_{21}^{c_{21}^k}, a_{12}^{c_{12}^k}] = [b_{21}^{c_{21}^k}, b_{12}^{c_{12}^k}] = 1, ~~~k \in \mathbb{Z}.
$$
\end{prop}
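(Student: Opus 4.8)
The plan is to start from the presentation of $T_2$ established in the preceding proposition, whose defining relators are $[a_{21}, a_{12}]^{c_{11}^k}$ and $[b_{21}, b_{12}]^{c_{11}^k}$ for $k \in \mathbb{Z}$, and to convert it into the claimed presentation by a single family of relator-conjugations. The only inputs needed are that conjugation by $c_{11}$ normalizes $T_2$ with an explicit formula, supplied by Corollary~\ref{c4.2} and Lemma~\ref{l4.12}, together with the elementary fact that conjugating every defining relator by a fixed word in the generators leaves the group unchanged.

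First I would expand each relator using that conjugation is a homomorphism, writing $[a_{21}, a_{12}]^{c_{11}^k} = [a_{21}^{c_{11}^k}, a_{12}^{c_{11}^k}]$ and similarly for the $b$-relator. Then I would substitute the conjugation rules: Corollary~\ref{c4.2} gives $a_{21}^{c_{11}} = a_{21}$ and $b_{21}^{c_{11}} = b_{21}$, hence $a_{21}^{c_{11}^k} = a_{21}$ and $b_{21}^{c_{11}^k} = b_{21}$ by iteration, while Lemma~\ref{l4.12} gives $a_{12}^{c_{11}^k} = a_{12}^{c_{12}^k c_{21}^{-k}}$ and $b_{12}^{c_{11}^k} = b_{12}^{c_{12}^k c_{21}^{-k}}$. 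This turns the relators into $[a_{21}, a_{12}^{c_{12}^k c_{21}^{-k}}]$ and $[b_{21}, b_{12}^{c_{12}^k c_{21}^{-k}}]$, which are now honest words in the four generators of $T_2$, since $c_{12} = b_{12} a_{12}$ and $c_{21} = b_{21} a_{21}$ both lie in $T_2$.

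The decisive step is to conjugate each rewritten relator by $c_{21}^k \in T_2$. A brief commutator computation gives $[a_{21}, a_{12}^{c_{12}^k c_{21}^{-k}}]^{c_{21}^k} = [a_{21}^{c_{21}^k}, a_{12}^{c_{12}^k}]$, the trailing $c_{21}^{-k}$ cancelling against the conjugating $c_{21}^k$, and identically for the $b$-family. Since replacing each defining relator by its conjugate under a fixed group element leaves the normal closure, and hence the presented group, unchanged, the resulting relators $[a_{21}^{c_{21}^k}, a_{12}^{c_{12}^k}]$ and $[b_{21}^{c_{21}^k}, b_{12}^{c_{12}^k}]$ present the same group $T_2$, as claimed. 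I do not anticipate a substantive obstacle: the work is purely bookkeeping on conjugation exponents. The only points demanding care are to confirm that $c_{21}^k$ genuinely lies in $T_2$, so that the relator-conjugation is an admissible Tietze move, and to track the convention $a^b = b^{-1} a b$ consistently, so that the composite exponent $c_{12}^k c_{21}^{-k}$ followed by conjugation by $c_{21}^k$ collapses correctly to $c_{12}^k$.
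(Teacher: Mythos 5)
Your proposal is correct and follows essentially the same route as the paper: rewrite the relators $[a_{21},a_{12}]^{c_{11}^k}$, $[b_{21},b_{12}]^{c_{11}^k}$ using Lemma~\ref{l4.12} and the rules $a_{21}^{c_{11}}=a_{21}$, $b_{21}^{c_{11}}=b_{21}$ to get $[a_{21}, a_{12}^{c_{12}^k c_{21}^{-k}}]$, $[b_{21}, b_{12}^{c_{12}^k c_{21}^{-k}}]$, then conjugate by $c_{21}^k$ to collapse the exponents. Your extra care in justifying the relator-conjugation as a normal-closure-preserving (Tietze) move, and in checking $c_{21}^k\in T_2$, only makes explicit what the paper leaves implicit.
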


\begin{proof}
As we know $T_2$ is defined by the relations
$$
[a_{21}, a_{12}]^{c_{11}^k} = [b_{21}, b_{12}]^{c_{11}^k} = 1.
$$
Using Lemma \ref{l4.12} and conjugation rules we can rewrite these relations in the form
$$
[a_{21}, a_{12}^{c_{12}^k c_{21}^{-k}}] = [b_{21}, b_{12}^{c_{12}^k c_{21}^{-k}}] = 1.
$$
Conjugating both sides of these relations by $c_{21}^{k}$, we get the need relations.
\end{proof}

\medskip

\subsection{$VP_4$ and its subgroup $T_3$} The group $VP_4$ is generated by elements
$$
\lambda_{12}, ~~\lambda_{21}, ~~\lambda_{13}, ~~\lambda_{23}, ~~\lambda_{31}, ~~\lambda_{32}, ~~
\lambda_{14}, ~~\lambda_{24}, ~~\lambda_{34}, ~~\lambda_{41}, ~~\lambda_{42}, ~~\lambda_{43}.
$$
On the over side, $VP_4 = \langle T_1, T_2, T_3 \rangle$, where
$$
T_1 = \langle a_{11}, b_{11} \rangle,~~T_2 = \langle a_{21}, a_{12}, b_{21}, b_{12} \rangle,~~
T_3 = \langle a_{31}, a_{22}, a_{13}, b_{31}, b_{22}, b_{13} \rangle.
$$
We have found expressions of the new generators $a_{ij}$ and $b_{ij}$ as words in standard generators of $VP_4$. Find expressions of the old generators
as words in the new generators:
$$
\lambda_{12} = a_{11}, ~~\lambda_{21} = b_{11}, ~~\lambda_{13}  = a_{12}  a_{11}^{-1}, ~~\lambda_{23}  = a_{11}  a_{12}^{-1} a_{21}, ~~
\lambda_{31} = b_{11}^{-1} b_{12},~~
\lambda_{32} = b_{21}  b_{12}^{-1} b_{11},
$$
$$
\lambda_{14}  = a_{13}  a_{12}^{-1},
~~\lambda_{24} = a_{12}  a_{13}^{-1} a_{22} a_{21}^{-1}, ~~\lambda_{34} = a_{21}  a_{22}^{-1} a_{31},
$$
$$
\lambda_{41}  = b_{12}^{-1} b_{13}, ~~\lambda_{42} = b_{21}^{-1} b_{22} b_{13}^{-1} b_{12}, ~~\lambda_{43} = b_{31}  b_{22}^{-1} b_{21}.
$$

To find a presentation of $VP_4$ in the new generators, we can act on $VP_3$ by degeneracy maps $s_1, s_2, s_3$. We will use a presentation of $VP_3$ from Corollary \ref{c4.2}. Then

1) The group $s_0(VP_3)$ is generated by elements
$$
a_{21},~~ c_{21},~~ a_{22},~~ a_{31},~~
b_{22},~~ b_{31}
$$
and is defined by relations
$$
[a_{31}, a_{22}] = [b_{31}, b_{22}] = 1,
$$
$$
a_{31}^{c_{21}} = a_{31},~~~b_{31}^{c_{21}} = b_{31},~~~a_{22}^{c_{21}} =
a_{22}^{c_{22} c_{31}^{-1}},~~~ b_{22}^{c_{21}} = b_{22}^{c_{22} c_{31}^{-1}}.
$$

2) The group $s_1(VP_3)$ is generated by elements
$$
a_{12},~~ c_{12},~~ a_{13},~~ a_{31},~~
b_{13},~~ b_{31}
$$
and is defined by relations
$$
[a_{31}, a_{13}] = [b_{31}, b_{13}] = 1,
$$
$$
a_{31}^{c_{12}} = a_{31},~~~b_{31}^{c_{12}} = b_{31},~~~a_{13}^{c_{12}} =
a_{13}^{c_{13} c_{31}^{-1}},~~~ b_{13}^{c_{12}} = b_{13}^{c_{13} c_{31}^{-1}}.
$$

3) The group $s_2(VP_3)$ is generated by elements
$$
a_{11},~~ c_{11},~~ a_{13},~~ a_{22},~~
b_{13},~~ b_{22}
$$
and is defined by relations
$$
[a_{22}, a_{13}] = [b_{22}, b_{13}] = 1,
$$
$$
a_{22}^{c_{11}} = a_{22},~~~b_{22}^{c_{11}} = b_{22},~~~a_{13}^{c_{11}} =
a_{13}^{c_{13} c_{22}^{-1}},~~~ b_{13}^{c_{11}} = b_{13}^{c_{13} c_{22}^{-1}}.
$$

The defining relations of the groups $VP_3$, $s_i(VP_3)$, $i = 0, 1, 2$, are not the full set of defining relations of $VP_4$. We need to add
the commutativity relations:
\begin{equation}
[\lambda_{34}^{*}, \lambda_{12}^{*}] = [\lambda_{24}^{*}, \lambda_{13}^{*}] = [\lambda_{14}^{*}, \lambda_{23}^{*}] = 1,
\label{r4.1}
\end{equation}
where $\lambda_{ij}^{*}$ is any element from the set $\{ \lambda_{ij}, \lambda_{ji} \}$.

To find defining relations of $T_3$ we need to understand that relations in $VP_4$ give relations in $T_3$. To do it we present $VP_4$ as HNN-extensions with some base group $G_4$ and stable letter $a_{11}$. Hence, the defining relations of $T_3$ came from defining relations of $G_4$.

We will analize the relations (\ref{r4.1}) and show that six from these relations are conjugation rules by $a_{11}$ and can be used in a presentation of $VP_4$ as HNN-extensions and other relations can be write as defining relations in $G_4$.

{\it Commutativity relations } $[\lambda_{34}^*, \lambda_{12}^*] = 1$. These relations have the form
$$
[\lambda_{34}, \lambda_{12}] = 1 \Leftrightarrow [a_{21} a_{22}^{-1} a_{31}, a_{11}] = 1,
$$
$$
[\lambda_{34}, \lambda_{21}] = 1 \Leftrightarrow [a_{21} a_{22}^{-1} a_{31}, b_{11}] = 1,
$$
$$
[\lambda_{43}, \lambda_{12}] = 1 \Leftrightarrow [b_{31} b_{22}^{-1} b_{21}, a_{11}] = 1,
$$
$$
[\lambda_{43}, \lambda_{21}] = 1 \Leftrightarrow [b_{31} b_{22}^{-1} b_{21}, b_{11}] = 1.
$$
Write the first and the third relations in the form
$$
\left( a_{21} a_{22}^{-1} a_{31} \right)^{a_{11}} = a_{21} a_{22}^{-1} a_{31},~~~\left( b_{31} b_{22}^{-1} b_{21} \right)^{a_{11}} = b_{31} b_{22}^{-1} b_{21}. $$
Then from the second and from the fourth relations follows
\begin{equation} \label{r4.3}
\left( a_{21} a_{22}^{-1} a_{31} \right)^{c_{11}} = a_{21} a_{22}^{-1} a_{31},~~~\left( b_{31} b_{22}^{-1} b_{21} \right)^{c_{11}} = b_{31} b_{22}^{-1} b_{21}.
\end{equation}
In $VP_3$ we have relations  $a_{21}^{c_{11}} = a_{21}$, $b_{21}^{c_{11}} = b_{21}$, and in $s_2(VP_3)$ we have relations $a_{22}^{c_{11}} = a_{22}$,  $b_{22}^{c_{11}} = b_{22}$. Hence, from (\ref{r4.3}) we get
$$
a_{31}^{c_{11}} =  a_{31},~~~ b_{31}^{c_{11}} = b_{31}.
$$
We proved

\begin{lem} \label{l4.6}
From the relations $[\lambda_{34}^*, \lambda_{12}^*] = 1$ in $VP_4$  follow formulas of  conjugation by $a_{11}$:
$$
\left( a_{21} a_{22}^{-1} a_{31} \right)^{a_{11}} = a_{21} a_{22}^{-1} a_{31},~~~\left( b_{31} b_{22}^{-1} b_{21} \right)^{a_{11}} = b_{31} b_{22}^{-1} b_{21}, $$
and  formulas of  conjugation by $c_{11}$:
$$
a_{31}^{c_{11}} =  a_{31},~~~ b_{31}^{c_{11}} = b_{31}.
$$
\end{lem}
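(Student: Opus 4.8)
The plan is to extract the claimed conjugation formulas directly from the four commutativity relations $[\lambda_{34}^*,\lambda_{12}^*]=1$, exactly as the paper sets up the problem. First I would translate each of the four relations into the new generators $a_{ij},b_{ij}$ using the expressions for $\lambda_{34}=a_{21}a_{22}^{-1}a_{31}$ and $\lambda_{43}=b_{31}b_{22}^{-1}b_{21}$ together with $\lambda_{12}=a_{11}$ and $\lambda_{21}=b_{11}$. The relations $[\lambda_{34},\lambda_{12}]=1$ and $[\lambda_{43},\lambda_{12}]=1$ rewrite immediately as the two conjugation-by-$a_{11}$ formulas, since commuting with $a_{11}=\lambda_{12}$ is the same as being fixed under conjugation by $a_{11}$. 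This gives the first displayed pair of formulas in the statement with essentially no computation.

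Next I would derive the conjugation-by-$c_{11}$ formulas. The key observation is that the remaining two relations $[\lambda_{34},\lambda_{21}]=1$ and $[\lambda_{43},\lambda_{21}]=1$ say that $a_{21}a_{22}^{-1}a_{31}$ and $b_{31}b_{22}^{-1}b_{21}$ are each fixed under conjugation by $b_{11}$. Recalling $c_{11}=b_{11}a_{11}$, I would conjugate the already-established fixed-by-$a_{11}$ relations by $b_{11}$: since the word is fixed by $a_{11}$ and also fixed by $b_{11}$, it is fixed by the product $c_{11}=b_{11}a_{11}$. This produces the intermediate equations labeled \eqref{r4.3}, namely $(a_{21}a_{22}^{-1}a_{31})^{c_{11}}=a_{21}a_{22}^{-1}a_{31}$ and $(b_{31}b_{22}^{-1}b_{21})^{c_{11}}=b_{31}b_{22}^{-1}b_{21}$.

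Finally I would isolate $a_{31}$ and $b_{31}$. Here I invoke the conjugation rules already proved in the subgroups sitting inside $VP_4$: from the presentation of $VP_3$ I have $a_{21}^{c_{11}}=a_{21}$ and $b_{21}^{c_{11}}=b_{21}$, and from the presentation of $s_2(VP_3)$ in part~3) above I have $a_{22}^{c_{11}}=a_{22}$ and $b_{22}^{c_{11}}=b_{22}$. Conjugating the word $a_{21}a_{22}^{-1}a_{31}$ by $c_{11}$ distributes over the product, so the fixedness of the whole word together with the fixedness of $a_{21}$ and $a_{22}$ forces $a_{31}^{c_{11}}=a_{31}$, and symmetrically $b_{31}^{c_{11}}=b_{31}$. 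This yields the second displayed pair of formulas and completes the proof.

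The only delicate point is bookkeeping: I must be sure that the generators $a_{21},a_{22},a_{31}$ (and their $b$-counterparts) indeed all live in $VP_4$ and that the conjugation rules I am quoting apply verbatim there, i.e. that $c_{11}$ acts on each factor independently in the way recorded in the $VP_3$ and $s_2(VP_3)$ presentations. I do not expect any genuine obstruction, since each step is either a direct rewriting of a commutator as a fixed-point condition or a cancellation using previously established relations; the substance is entirely in correctly reading off $\lambda_{34},\lambda_{43}$ in terms of the cabled generators and then matching the factorization against the known $c_{11}$-actions.
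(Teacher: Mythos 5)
Your proposal is correct and follows essentially the same route as the paper: rewrite the four commutativity relations in the cabled generators, read off the two fixed-point formulas under conjugation by $a_{11}=\lambda_{12}$, combine fixedness under $a_{11}$ and $b_{11}=\lambda_{21}$ to get fixedness under $c_{11}=b_{11}a_{11}$, and then cancel the factors $a_{21},a_{22}$ (resp.\ $b_{21},b_{22}$) using the conjugation rules from $VP_3$ and $s_2(VP_3)$ to isolate $a_{31}^{c_{11}}=a_{31}$ and $b_{31}^{c_{11}}=b_{31}$. The only difference is cosmetic: you spell out the product argument for $c_{11}$ that the paper leaves implicit.
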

{\it Commutativity relations } $[\lambda_{24}^*, \lambda_{13}^*] = 1$. These relations have the form
$$
[\lambda_{24}, \lambda_{13}] = 1 \Leftrightarrow [a_{12} a_{13}^{-1} a_{22} a_{21}^{-1}, a_{12} a_{11}^{-1}] = 1,
$$
$$
[\lambda_{24}, \lambda_{31}] = 1 \Leftrightarrow [a_{12} a_{13}^{-1} a_{22} a_{21}^{-1}, b_{11}^{-1} b_{12}] = 1,
$$
$$
[\lambda_{42}, \lambda_{13}] = 1 \Leftrightarrow [b_{21}^{-1} b_{22} b_{13}^{-1} b_{12}, a_{12} a_{11}^{-1}] = 1,
$$
$$
[\lambda_{42}, \lambda_{31}] = 1 \Leftrightarrow [b_{21}^{-1} b_{22} b_{13}^{-1} b_{12},  b_{11}^{-1} b_{12}] = 1.
$$

From the first relation we have the following conjugation formula by $a_{11}$:
\begin{equation} \label{r4.4}
\left( a_{12} a_{13}^{-1} a_{22} a_{21}^{-1} \right)^{a_{11}} = \left( a_{12} a_{13}^{-1} a_{22} a_{21}^{-1} \right)^{a_{12}}.
\end{equation}

The second relation
has the form
$$
\left( a_{12} a_{13}^{-1} a_{22} a_{21}^{-1} \right)^{b_{11}^{-1}} = \left( a_{12} a_{13}^{-1} a_{22} a_{21}^{-1} \right)^{b_{12}^{-1}}.
$$
Since $b_{ij}^{-1} = a_{ij} c_{ij}^{-1}$ we have
$$
\left( a_{12} a_{13}^{-1} a_{22} a_{21}^{-1} \right)^{a_{11} c_{11}^{-1}} = \left( a_{12} a_{13}^{-1} a_{22} a_{21}^{-1} \right)^{a_{12} c_{12}^{-1}}.
$$
Using (\ref{r4.4}) rewrite this relation in the form
$$
\left( a_{12} a_{13}^{-1} a_{22} a_{21}^{-1} \right)^{a_{12} c_{11}^{-1}} = \left( a_{13}^{-1} a_{22} a_{21}^{-1} a_{12}  \right)^{c_{12}^{-1}}.
$$
That is equivalent to the relation
\begin{equation}
\left( a_{13}^{-1} a_{22} \right)^{ c_{12}^{-1}} = \left( a_{13}^{-1} a_{22} \right)^{ c_{11}^{-1}}  \left( a_{21}^{-1} a_{12}\right)^{c_{11}^{-1}}
 \left( a_{12}^{-1} a_{21}\right)^{c_{12}^{-1}}.
\end{equation}

Similarly, from the third relation
\begin{equation} \label{r4.5}
\left( b_{21}^{-1} b_{22} b_{13}^{-1} b_{12} \right)^{a_{11}} = \left( b_{21}^{-1} b_{22} b_{13}^{-1} b_{12}\right)^{a_{12}}.
\end{equation}
It is a formula of conjugation by $a_{11}$.

The fourth relation has the form
$$
\left( b_{21}^{-1} b_{22} b_{13}^{-1} b_{12} \right)^{b_{11}^{-1}} = \left( b_{21}^{-1} b_{22} b_{13}^{-1} b_{12}\right)^{b_{12}^{-1}}.
$$
Using the equality  $b_{11}^{-1} = a_{11} c_{11}^{-1}$, rewrite the last relation in the form
$$
\left( b_{21}^{-1} b_{22} b_{13}^{-1} b_{12} \right)^{a_{11} c_{11}^{-1}} = b_{12} b_{21}^{-1} b_{22} b_{13}^{-1},
$$
and using (\ref{r4.5}) we get
$$
\left( b_{21}^{-1} b_{22} b_{13}^{-1} b_{12} \right)^{a_{12} c_{11}^{-1}} = b_{12} b_{21}^{-1} b_{22} b_{13}^{-1}.
$$
Since $a_{12} =  b_{12}^{-1} c_{12}$, we have
$$
\left( b_{12} b_{21}^{-1} b_{22} b_{13}^{-1} \right)^{c_{12}} = \left( b_{12} b_{21}^{-1} b_{22} b_{13}^{-1} \right)^{c_{11}}.
$$
This relation is equivalent to
\begin{equation}
\left( b_{22} b_{13}^{-1} \right)^{c_{12}} = \left( b_{21} b_{12}^{-1} \right)^{c_{12}} \left( b_{12} b_{21}^{-1} \right)^{c_{11}}
\left( b_{22} b_{13}^{-1} \right)^{c_{11}}.
\end{equation}

Hence, we have

\begin{lem} \label{l4.7}
From the relations $[\lambda_{24}^*, \lambda_{13}^*] = 1$ in $VP_4$  follow formulas of  conjugation by $a_{11}$:
$$
\left( a_{12} a_{13}^{-1} a_{22} a_{21}^{-1} \right)^{a_{11}} = \left( a_{12} a_{13}^{-1} a_{22} a_{21}^{-1} \right)^{a_{12}},~~~
\left( b_{21}^{-1} b_{22} b_{13}^{-1} b_{12} \right)^{a_{11}} = \left( b_{21}^{-1} b_{22} b_{13}^{-1} b_{12}\right)^{a_{12}},
$$
and  formulas of  conjugation by $c_{12}^{-1}$ and by $c_{12}$:
$$
\left( a_{13}^{-1} a_{22} \right)^{ c_{12}^{-1}} = \left( a_{13}^{-1} a_{22} \right)^{ c_{11}^{-1}}  \left( a_{21}^{-1} a_{12}\right)^{c_{11}^{-1}}
 \left( a_{12}^{-1} a_{21}\right)^{c_{12}^{-1}},
 $$
 $$
 \left( b_{22} b_{13}^{-1} \right)^{c_{12}} = \left( b_{21} b_{12}^{-1} \right)^{c_{12}} \left( b_{12} b_{21}^{-1} \right)^{c_{11}}
\left( b_{22} b_{13}^{-1} \right)^{c_{11}}.
$$
\end{lem}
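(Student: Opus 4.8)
The plan is to follow the proof of Lemma~\ref{l4.6} verbatim, handling the four relations $[\lambda_{24}^*,\lambda_{13}^*]=1$ one by one. First I would insert the expressions $\lambda_{24}=a_{12}a_{13}^{-1}a_{22}a_{21}^{-1}$, $\lambda_{42}=b_{21}^{-1}b_{22}b_{13}^{-1}b_{12}$, $\lambda_{13}=a_{12}a_{11}^{-1}$ and $\lambda_{31}=b_{11}^{-1}b_{12}$, converting the four commutator relations into identities among the cabled generators. The two relations whose second entry is $\lambda_{13}$ carry the letter $a_{11}$ and should produce conjugation-by-$a_{11}$ formulas at once, while the two whose second entry is $\lambda_{31}$ carry $b_{11}$ and have to be turned into conjugation-by-$c_{12}^{\pm1}$ formulas by peeling off the $a_{11}$-part through the identities $b_{ij}=c_{ij}a_{ij}^{-1}$ coming from $c_{ij}=b_{ij}a_{ij}$.

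For the two $a_{11}$-formulas I would read $[u,\lambda_{13}]=1$ as $u^{\lambda_{13}}=u$. Since $\lambda_{13}=a_{12}a_{11}^{-1}$ has inverse $a_{11}a_{12}^{-1}$, the equation $a_{11}a_{12}^{-1}u\,a_{12}a_{11}^{-1}=u$ rearranges immediately to $u^{a_{11}}=u^{a_{12}}$. Taking $u=a_{12}a_{13}^{-1}a_{22}a_{21}^{-1}$ from $[\lambda_{24},\lambda_{13}]=1$ and $u=b_{21}^{-1}b_{22}b_{13}^{-1}b_{12}$ from $[\lambda_{42},\lambda_{13}]=1$ yields the first two displayed formulas. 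This part is routine.

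The real content lies in the two relations with $\lambda_{31}$, where $[u,\lambda_{31}]=1$ rearranges to $u^{b_{11}^{-1}}=u^{b_{12}^{-1}}$. Here I would substitute $b_{11}^{-1}=a_{11}c_{11}^{-1}$ and $b_{12}^{-1}=a_{12}c_{12}^{-1}$ (and, for the $b$-relation, also use $a_{12}=b_{12}^{-1}c_{12}$), and then invoke the just-established identity $u^{a_{11}}=u^{a_{12}}$ to replace the $a_{11}$-conjugation on the left by an $a_{12}$-conjugation. The matching $a_{12}$-conjugations can then be cancelled, leaving a relation involving only conjugation by $c_{11}$ and $c_{12}$. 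The last move is to isolate the factor $a_{13}^{-1}a_{22}$ (respectively $b_{22}b_{13}^{-1}$) by pushing the remaining $a_{21}^{\pm1},a_{12}^{\pm1}$ (respectively $b_{21}^{\pm1},b_{12}^{\pm1}$) to the two ends, which is legitimate because $[a_{21},a_{12}]=[b_{21},b_{12}]=1$ in $VP_3$ by Corollary~\ref{c4.2}; this should deliver exactly the two stated conjugation-by-$c_{12}^{\pm1}$ formulas. I expect the \emph{only} genuine obstacle to be the cancellation bookkeeping in this last step—getting the surviving factors into the precise symmetric product form of the lemma—while every other move is a mechanical substitution.
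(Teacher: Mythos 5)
Your proposal is correct and follows essentially the same route as the paper: extract the two $a_{11}$-conjugation formulas from the relations involving $\lambda_{13}$, then in the $\lambda_{31}$-relations substitute $b_{11}^{-1}=a_{11}c_{11}^{-1}$, $b_{12}^{-1}=a_{12}c_{12}^{-1}$ (resp.\ $a_{12}=b_{12}^{-1}c_{12}$), use the $a_{11}$-formulas to trade $a_{11}$ for $a_{12}$, and rearrange into the stated $c_{12}^{\pm1}$-formulas. One small remark: the final isolation of $a_{13}^{-1}a_{22}$ (resp.\ $b_{22}b_{13}^{-1}$) needs no commutativity at all, since after conjugating by $a_{12}$ (resp.\ $b_{12}^{-1}$) the factors $a_{21}^{-1}a_{12}$ (resp.\ $b_{12}b_{21}^{-1}$) already sit at one end, and splitting a conjugate of a product into a product of conjugates plus moving one factor to the other side is pure bookkeeping.
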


Let us prove that we can simplify two last relations from this lemma.

\begin{lem} \label{l4.81}
In $VP_4$ the following relations hold

1) $\left( a_{13}^{-1} a_{22} \right)^{ c_{12}^{-1}} = \left( a_{13}^{-1} a_{22} \right)^{ c_{11}^{-1}}  [c_{21}, c_{12}^{-1}],$\\

2) $ \left( b_{22} b_{13}^{-1} \right)^{ c_{12}} = [c_{12}, c_{21}^{-1}] \left( b_{22} b_{13}^{-1} \right)^{ c_{11}}.$
\end{lem}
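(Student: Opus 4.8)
The plan is to recognize the two relations of Lemma~\ref{l4.81} as the simplified forms of the last two relations of Lemma~\ref{l4.7}. The two right-hand sides in Lemma~\ref{l4.7} carry a common factor, namely $\left(a_{13}^{-1}a_{22}\right)^{c_{11}^{-1}}$ on the left in the first relation and $\left(b_{22}b_{13}^{-1}\right)^{c_{11}}$ on the right in the second, while the left-hand sides agree with those of Lemma~\ref{l4.81}; cancelling these common factors reduces the whole claim to the two identities
\begin{equation*}
\left(a_{21}^{-1}a_{12}\right)^{c_{11}^{-1}}\left(a_{12}^{-1}a_{21}\right)^{c_{12}^{-1}}=[c_{21},c_{12}^{-1}],\qquad
\left(b_{21}b_{12}^{-1}\right)^{c_{12}}\left(b_{12}b_{21}^{-1}\right)^{c_{11}}=[c_{12},c_{21}^{-1}].
\end{equation*}
Both involve only $a_{12},a_{21},b_{12},b_{21},c_{11},c_{12},c_{21}$, which all lie in the subgroup $VP_3\leq VP_4$, so I would verify them inside $VP_3$ using the conjugation rules of Proposition~\ref{p4.1} and Lemma~\ref{l4.12} together with the commutativity relations $[a_{21},a_{12}]=[b_{21},b_{12}]=1$.

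I would dispose of the second identity first, as it is the direct one. Expanding the conjugations by $c_{11}$ through $b_{21}^{c_{11}}=b_{21}$ and $b_{12}^{c_{11}}=b_{12}^{a_{21}^{-1}a_{12}}$, and the conjugations by $c_{12}$ by writing $c_{12}=b_{12}a_{12}$ and cancelling with $[b_{21},b_{12}]=1$, the left-hand side collapses to $a_{12}^{-1}b_{21}b_{12}^{-1}a_{21}b_{12}a_{21}^{-1}a_{12}b_{21}^{-1}$; expanding $[c_{12},c_{21}^{-1}]=c_{12}^{-1}c_{21}c_{12}c_{21}^{-1}$ in the same way produces the identical word, so this identity already follows from commutativity. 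The first identity differs only in that a negative power of $c_{11}$ occurs, so in place of the rule for $a_{12}^{c_{11}}$ I would use its inverse form $a_{12}^{c_{11}^{-1}}=a_{12}^{c_{12}^{-1}c_{21}}$ from Lemma~\ref{l4.12} (together with $a_{21}^{c_{11}}=a_{21}$). After substituting $c_{12}=b_{12}a_{12}$, $c_{21}=b_{21}a_{21}$ and cancelling, the first identity becomes equivalent---after stripping the common outer letters---to the single commutator relation $[\,b_{21}^{a_{12}^{-1}},\,b_{12}^{a_{21}^{-1}}\,]=1$.

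The crux, and the only point at which a relation not yet on the list is used, is this last commutator. I would derive it by conjugating the defining relation $[b_{21},b_{12}]=1$ by $c_{11}$: applying $b_{21}^{c_{11}}=b_{21}$ and $b_{12}^{c_{11}}=b_{12}^{a_{21}^{-1}a_{12}}$ gives $[\,b_{21},\,b_{12}^{a_{21}^{-1}a_{12}}\,]=1$, that is, $b_{21}$ commutes with $a_{12}^{-1}\bigl(a_{21}b_{12}a_{21}^{-1}\bigr)a_{12}$; conjugating this commutativity by $a_{12}^{-1}$ yields exactly $[\,b_{21}^{a_{12}^{-1}},\,b_{12}^{a_{21}^{-1}}\,]=1$. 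The main obstacle is purely organizational: the conjugations by $c_{12}$ and $c_{21}$ interleave the $a$- and $b$-generators, which commute only within each family, so every cancellation must be justified by $[a_{21},a_{12}]=[b_{21},b_{12}]=1$; keeping the asymmetry between the two identities straight---the first needing the inverse rule of Lemma~\ref{l4.12} and the derived commutator, the second going through directly---is where care is required.
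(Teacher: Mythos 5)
Your opening move --- cancelling the common factors $\left(a_{13}^{-1}a_{22}\right)^{c_{11}^{-1}}$ and $\left(b_{22}b_{13}^{-1}\right)^{c_{11}}$ in Lemma~\ref{l4.7} so that the Lemma reduces to the two identities
$\left(a_{21}^{-1}a_{12}\right)^{c_{11}^{-1}}\left(a_{12}^{-1}a_{21}\right)^{c_{12}^{-1}}=[c_{21},c_{12}^{-1}]$ and
$\left(b_{21}b_{12}^{-1}\right)^{c_{12}}\left(b_{12}b_{21}^{-1}\right)^{c_{11}}=[c_{12},c_{21}^{-1}]$ --- is exactly the paper's first step, and your treatment of the \emph{second} identity is correct: with $b_{21}^{c_{11}}=b_{21}$, $b_{12}^{c_{11}}=b_{12}^{a_{21}^{-1}a_{12}}$ and $c_{12}=b_{12}a_{12}$, $c_{21}=b_{21}a_{21}$, both sides do collapse to the word $a_{12}^{-1}b_{21}b_{12}^{-1}a_{21}b_{12}a_{21}^{-1}a_{12}b_{21}^{-1}$ using only $[a_{21},a_{12}]=[b_{21},b_{12}]=1$.

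The first identity, however, is where your proof breaks: the residual commutator is misidentified. Carrying out precisely the substitution you prescribe ($a_{21}^{c_{11}^{-1}}=a_{21}$, $a_{12}^{c_{11}^{-1}}=a_{12}^{c_{12}^{-1}c_{21}}$, then $c_{12}=b_{12}a_{12}$, $c_{21}=b_{21}a_{21}$), the first identity is equivalent, modulo the two commutativity relations, to
\begin{equation*}
[\,b_{12}a_{12}b_{12}^{-1},\ b_{21}a_{21}b_{21}^{-1}\,]=1 ,
\end{equation*}
which is the relation $[a_{21},a_{12}]^{c_{11}^{-1}}=1$ written out --- \emph{not} to $[\,b_{21}^{a_{12}^{-1}},b_{12}^{a_{21}^{-1}}\,]=1$, which is $[b_{21},b_{12}]^{c_{11}}=1$ written out. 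These two residuals are not interchangeable modulo commutativity, and the one you derive is too weak. Concretely, map $a_{12},a_{21}\mapsto(1\,2\,3)$, $b_{12}\mapsto(1\,2)$, $b_{21}\mapsto(3\,4)$ into $S_4$: the relations $[a_{21},a_{12}]=[b_{21},b_{12}]=1$ hold, and your commutator $[\,b_{21}^{a_{12}^{-1}},b_{12}^{a_{21}^{-1}}\,]=[b_{21},b_{12}]^{a_{12}^{-1}}$ is trivial there because the two conjugating elements coincide; but $[\,b_{12}a_{12}b_{12}^{-1},\,b_{21}a_{21}b_{21}^{-1}\,]=[(1\,3\,2),(1\,2\,4)]\neq 1$, and accordingly the two sides of the reduced first identity evaluate to the distinct permutations $(1\,2)(3\,4)$ and $(1\,4)(2\,3)$. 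So no amount of cancellation based on the relations you allow can pass from your commutator to the first identity. The repair is the mirror image of what you did on the $b$-side: conjugate $[a_{21},a_{12}]=1$ by $c_{11}^{-1}$, using $a_{21}^{c_{11}^{-1}}=a_{21}$ and $a_{12}^{c_{11}^{-1}}=a_{12}^{c_{12}^{-1}c_{21}}$, to get $[a_{21},\,c_{21}^{-1}(b_{12}a_{12}b_{12}^{-1})c_{21}]=1$; conjugating this by $c_{21}^{-1}$ and noting $c_{21}a_{21}c_{21}^{-1}=b_{21}a_{21}b_{21}^{-1}$ gives exactly the needed commutator. (The paper itself reaches part 1 by a different device: it inserts $c_{21}^{-1}c_{21}$, uses $[c_{11},c_{21}]=1$, the $b$-commutativity in the mixed form $c_{21}a_{21}^{-1}\cdot a_{12}c_{12}^{-1}=a_{12}c_{12}^{-1}\cdot c_{21}a_{21}^{-1}$, and the rules $a_{12}^{c_{11}}=a_{12}^{c_{12}c_{21}^{-1}}$, $c_{12}^{c_{11}}=c_{12}^{c_{21}^{-1}}$ of Corollary~\ref{c4.3}.)
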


\begin{proof}
1) To prove the first relation, we need to prove the equality
$$
 \left( a_{21}^{-1} a_{12}\right)^{c_{11}^{-1}}
 \left( a_{12}^{-1} a_{21}\right)^{c_{12}^{-1}} = [c_{21}, c_{12}^{-1}].
 $$
 We have
$$
c_{11}^{-1} a_{12} a_{21}^{-1} c_{12}^{-1} = (c_{21}^{-1} c_{21}) c_{11}^{-1} a_{12} a_{21}^{-1} c_{12}^{-1},
$$
where we add unit element $1 = c_{21}^{-1} c_{21}$.

Since $c_{21} c_{11}^{-1} = c_{11}^{-1} c_{21}$ and $c_{21} a_{21}^{-1} \cdot a_{12} c_{12}^{-1} = a_{12} c_{12}^{-1} \cdot c_{21} a_{21}^{-1}$, the last expression has the form
$$
 c_{21}^{-1}  c_{11}^{-1} (c_{21}  a_{21}^{-1} \cdot a_{12} c_{12}^{-1}) =  c_{21}^{-1}  c_{11}^{-1} (a_{12} c_{12}^{-1} \cdot c_{21} a_{21}^{-1}) =
c_{21}^{-1}  (a_{12} c_{12}^{-1} \cdot c_{21} a_{21}^{-1})^{c_{11}} c_{11}^{-1} =
$$
$$
= c_{21}^{-1}  a_{12}^{c_{12} c_{21}^{-1}} c_{12}^{-c_{21}^{-1}} c_{21} a_{21}^{-1} c_{11}^{-1} = c_{12}^{-1}  a_{12} a_{21}^{-1} c_{11}^{-1}.
$$
Hence, we have the relation
$$
c_{11}^{-1} a_{12} a_{21}^{-1} c_{12}^{-1} = c_{12}^{-1}  a_{12} a_{21}^{-1} c_{11}^{-1}.
$$
From this relation follows
$$
a_{12} a_{21}^{-1} c_{12}^{-1} = (c_{12}^{-1}  a_{12} a_{21}^{-1})^{c_{11}^{-1}} \Leftrightarrow
(a_{12} a_{21}^{-1})^{c_{12}^{-1}} = c_{12}  c_{12}^{-c_{11}^{-1}} (a_{12} a_{21}^{-1})^{c_{11}^{-1}}\Leftrightarrow
$$
$$
\Leftrightarrow [c_{21}, c_{12}^{-1}]  = ( a_{21}^{-1} a_{12})^{c_{11}^{-1}} ( a_{12}^{-1} a_{21})^{c_{12}^{-1}}.
$$
From the last relation following the first relation in the lemma.

2) Let us prove the equality
$$
 \left( b_{21} b_{12}^{-1} \right)^{c_{12}}
 \left( b_{12} b_{21}^{-1} \right)^{c_{11}} = [c_{12}, c_{21}^{-1}].
 $$
Similarly to the previous case, we have
$$
c_{12} b_{12}^{-1} b_{21} c_{11} =  c_{12} b_{12}^{-1} b_{21} c_{11} (c_{21}^{-1} c_{21}) = (c_{12} b_{12}^{-1} \cdot b_{21} c_{21}^{-1}) c_{11} c_{21} =
$$
$$
= c_{11} ( b_{21} c_{21}^{-1} \cdot c_{12} b_{12}^{-1})^{c_{11}} c_{21} = c_{11} b_{21} c_{21}^{-1} \cdot c_{12}^{c_{21}^{-1}} b_{12}^{-c_{12} c_{21}^{-1}}) c_{21}) = c_{11} b_{21} b_{12}^{-1} c_{12}.
$$
Hence, we have found the relation
$$
c_{12} b_{12}^{-1} b_{21} c_{11} = c_{11} b_{21} b_{12}^{-1} c_{12}.
$$
From this relation
$$
c_{12}^{c_{11}} ( b_{12}^{-1} b_{21})^{c_{11}} =  b_{21} b_{12}^{-1} c_{12} \Leftrightarrow
c_{12}^{-1} c_{12}^{c_{21}^{-1}} ( b_{12}^{-1} b_{21})^{c_{11}} =  (b_{21} b_{12}^{-1})^{c_{12}}.
$$
This relation is equivalent to the need relation.
\end{proof}

\medskip

\begin{cor}
In $VP_4$ the following formulas hold

1) $a_{22}^{c_{12}^{-1}} = a_{13}^{c_{13}^{-1} c_{31}} a_{13}^{-c_{13}^{-1} c_{22}} a_{22} [c_{21}, c_{12}^{-1}],$\\

2) $b_{22}^{c_{12}} = [c_{12}, c_{21}^{-1}] b_{22} b_{13}^{-c_{13} c_{22}^{-1}} b_{13}^{c_{13} c_{31}^{-1}},$\\

3) $a_{22}^{c_{12}} = [c_{12}, c_{21}^{-1}]  a_{13}^{-c_{13} c_{22}^{-1}} a_{22}  a_{13}^{c_{13} c_{31}^{-1}},$\\

4) $b_{22}^{c_{12}^{-1}} = b_{13}^{c_{13}^{-1} c_{31}} b_{22} b_{13}^{-c_{13}^{-1} c_{22}}  [c_{21}, c_{12}^{-1}].$\\
\end{cor}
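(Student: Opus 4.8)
The plan is to read off, for each of the four identities, the conjugation rule for the single generator $a_{22}$ or $b_{22}$ out of the corresponding product rule in Lemma~\ref{l4.81}, after first recording how $c_{11}^{\pm1}$ and $c_{12}^{\pm1}$ conjugate the auxiliary generators $a_{13}$ and $b_{13}$.

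First I would collect the conjugation formulas for $a_{13}$ and $b_{13}$. Applying the degeneracy map $s_2$ to Lemma~\ref{l4.12} (equivalently, reading the presentation of $s_2(VP_3)$) gives $a_{13}^{c_{11}^k}=a_{13}^{c_{13}^k c_{22}^{-k}}$ and $b_{13}^{c_{11}^k}=b_{13}^{c_{13}^k c_{22}^{-k}}$, while applying $s_1$ gives $a_{13}^{c_{12}^k}=a_{13}^{c_{13}^k c_{31}^{-k}}$ and $b_{13}^{c_{12}^k}=b_{13}^{c_{13}^k c_{31}^{-k}}$. Specializing to $k=\pm1$ supplies in particular $a_{13}^{c_{12}^{-1}}=a_{13}^{c_{13}^{-1}c_{31}}$ and $a_{13}^{c_{11}^{-1}}=a_{13}^{c_{13}^{-1}c_{22}}$, together with their positive-exponent companions, and likewise for $b_{13}$. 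I would also record $a_{22}^{c_{11}}=a_{22}$, $b_{22}^{c_{11}}=b_{22}$ and $[a_{22},a_{13}]=[b_{22},b_{13}]=1$ from $s_2(VP_3)$; these combine to show that $a_{22}$ commutes with every $a_{13}^{c_{11}^{\pm1}}$ and $b_{22}$ with every $b_{13}^{c_{11}^{\pm1}}$, which is what lets the answers be written in the displayed order.

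Next I would derive $(1)$ and $(2)$. Expanding Lemma~\ref{l4.81}$(1)$ as $a_{13}^{-c_{12}^{-1}}a_{22}^{c_{12}^{-1}}=a_{13}^{-c_{11}^{-1}}a_{22}\,[c_{21},c_{12}^{-1}]$ (using $a_{22}^{c_{11}^{-1}}=a_{22}$) and solving for $a_{22}^{c_{12}^{-1}}$ gives $a_{22}^{c_{12}^{-1}}=a_{13}^{c_{12}^{-1}}a_{13}^{-c_{11}^{-1}}a_{22}\,[c_{21},c_{12}^{-1}]$; substituting the auxiliary formulas from the first step turns this into $(1)$. Identity $(2)$ follows in the same way from Lemma~\ref{l4.81}$(2)$: expanding to $b_{22}^{c_{12}}b_{13}^{-c_{12}}=[c_{12},c_{21}^{-1}]b_{22}b_{13}^{-c_{11}}$, solving for $b_{22}^{c_{12}}$, and substituting $b_{13}^{c_{11}}=b_{13}^{c_{13}c_{22}^{-1}}$, $b_{13}^{c_{12}}=b_{13}^{c_{13}c_{31}^{-1}}$.

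Finally, the opposite-direction identities $(3)$ and $(4)$ I would obtain by inverting the conjugations in $(1)$ and $(2)$: conjugating the relation for $a_{22}^{c_{12}^{-1}}$ by $c_{12}$ produces an expression for $a_{22}^{c_{12}}$, and symmetrically for $b_{22}^{c_{12}^{-1}}$, after which the commutativity facts from the first step move $a_{22}$ (resp. $b_{22}$) into the position displayed in $(3)$ (resp. $(4)$). The main obstacle is precisely this last step: the inversion introduces composite conjugates such as $a_{13}^{c_{11}^{-1}c_{12}}$, and collapsing these to the single conjugates $a_{13}^{c_{11}}$, $a_{13}^{c_{12}}$ and the commutator $[c_{12},c_{21}^{-1}]$ requires the interrelations among $c_{11}$, $c_{21}$ and their $s_i$-images, exactly in the spirit of the passage from the formula for $c_{22}^{c_{12}^{-1}}$ to that for $c_{22}^{c_{12}}$ in Proposition~\ref{prop4.1}. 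I expect the bookkeeping of these nested conjugations, rather than any conceptual difficulty, to be the delicate part.
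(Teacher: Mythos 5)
Your derivation of formulas (1) and (2) is correct and coincides with the paper's own argument: expand Lemma~\ref{l4.81} using $a_{22}^{c_{11}^{-1}}=a_{22}$ (resp.\ $b_{22}^{c_{11}}=b_{22}$), solve for $a_{22}^{c_{12}^{-1}}$ (resp.\ $b_{22}^{c_{12}}$), and substitute $a_{13}^{c_{12}^{-1}}=a_{13}^{c_{13}^{-1}c_{31}}$, $a_{13}^{c_{11}^{-1}}=a_{13}^{c_{13}^{-1}c_{22}}$ and the $b$-analogues. No issue there.

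For (3) and (4), however, there is a genuine gap, and it sits exactly where you placed your hope. Conjugating formula (1) by $c_{12}$ gives
\begin{equation*}
a_{22}^{c_{12}} \;=\; a_{13}^{c_{11}^{-1}c_{12}}\,a_{13}^{-1}\,a_{22}\,[c_{21},c_{12}],
\end{equation*}
whereas the target is $a_{22}^{c_{12}}=[c_{12},c_{21}^{-1}]\,a_{13}^{-c_{11}}\,a_{22}\,a_{13}^{c_{12}}$. Bridging these requires (i) collapsing the iterated conjugate $a_{13}^{c_{11}^{-1}c_{12}}$, which forces you to commute $c_{12}$ past $c_{11}$ (equivalently past $c_{13}$ and $c_{22}$) --- and the relevant relation $c_{22}^{c_{12}}$ is itself one of the long relations of Proposition~\ref{prop4.1}, so this is not mere bookkeeping; and (ii) transporting the commutator from the right of $a_{22}$ to the left, which needs $[a_{13}^{g},a_{22}^{g}]=1$ together with further $c$-relations. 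You name neither mechanism, and the proposal stops precisely at this point. The paper avoids the problem by \emph{not} inverting (1): it returns to Lemma~\ref{l4.81}(1), substitutes $[c_{21},c_{12}^{-1}]=c_{11}c_{12}c_{11}^{-1}c_{12}^{-1}$ (a consequence of $c_{12}^{c_{11}}=c_{12}^{c_{21}^{-1}}$ and $[c_{11},c_{21}]=1$), so that the commutator is absorbed into the conjugators and the relation becomes $a_{13}^{-1}a_{22}=c_{12}^{-1}c_{11}(a_{13}^{-1}a_{22})c_{12}c_{11}^{-1}$; conjugating this by $c_{11}$ (not by $c_{12}$) yields $a_{13}^{-c_{11}}a_{22}=[c_{11},c_{12}]\,a_{13}^{-c_{12}}a_{22}^{c_{12}}$, in which only \emph{single} conjugates occur; then the commutativity $[a_{13},a_{22}]=1$ swaps $a_{13}^{-c_{12}}$ and $a_{22}^{c_{12}}$, and the substitutions $[c_{12},c_{11}]=[c_{12},c_{21}^{-1}]$, $a_{13}^{c_{11}}=a_{13}^{c_{13}c_{22}^{-1}}$, $a_{13}^{c_{12}}=a_{13}^{c_{13}c_{31}^{-1}}$ finish (3); formula (4) is symmetric. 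Your route is not wrong in principle (both sides are valid identities in $VP_4$), but as written the decisive step is asserted rather than performed, and the absorption trick that makes it tractable is the missing idea.
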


\begin{proof}
1) Let us prove the first formula. The prove of the second one is the same. Take the first relation in Lemma \ref{l4.81}:
$$
\left( a_{13}^{-1} a_{22} \right)^{ c_{12}^{-1}} = \left( a_{13}^{-1} a_{22} \right)^{ c_{11}^{-1}}  [c_{21}, c_{12}^{-1}].
$$
Using the conjugation formulas, we get
$$
 a_{13}^{-c_{13}^{-1} c_{31}} a_{22}^{c_{12}^{-1}} = a_{13}^{-c_{13}^{-1} c_{22}} a_{22}  [c_{21}, c_{12}^{-1}].
$$
From this relation we get the first formula:
$$
a_{22}^{c_{12}^{-1}} = a_{13}^{c_{13}^{-1} c_{31}} a_{13}^{-c_{13}^{-1} c_{22}} a_{22} [c_{21}, c_{12}^{-1}].
$$

3) Let us prove the third formula. The prove of the fourth one is the same. Take the first relation in Lemma \ref{l4.81}:
$$
\left( a_{13}^{-1} a_{22} \right)^{ c_{12}^{-1}} = \left( a_{13}^{-1} a_{22} \right)^{ c_{11}^{-1}}  [c_{21}, c_{12}^{-1}].
$$
Since
$$
[c_{21}, c_{12}^{-1}] = c_{11} c_{12} c_{11}^{-1} c_{12}^{-1},
$$
then the relation have the form
$$
a_{13}^{-1} a_{22} = c_{12}^{-1} c_{11} (a_{13}^{-1} a_{22}) c_{12} c_{11}^{-1}.
$$
Conjugating both sides by $c_{11}$ we get
$$
a_{13}^{-c_{11}} a_{22} = [c_{11}, c_{12}] a_{13}^{-c_{12}} a_{22}^{c_{12}}.
$$
Since $a_{13}$ and $a_{22}$ are commute, then
$$
a_{13}^{-c_{11}} a_{22} = [c_{11}, c_{12}] a_{22}^{c_{12}}  a_{13}^{-c_{12}}
$$
or
$$
a_{22}^{c_{12}} = [c_{12}, c_{11}] a_{13}^{-c_{11}} a_{22} a_{13}^{c_{12}}.
$$
Using the formulas
$$
[c_{12}, c_{11}] = [c_{12}, c_{21}^{-1}],~~~a_{13}^{c_{11}} = a_{13}^{c_{13} c_{22}^{-1}},~~~a_{13}^{c_{12}} = a_{13}^{c_{13} c_{31}^{-1}},
$$
we get the need relation.

\end{proof}

\medskip

{\it Commutativity relations } $[\lambda_{14}^*, \lambda_{23}^*] = 1$. These relations have the form
$$
[\lambda_{14}, \lambda_{23}] = 1 \Leftrightarrow [a_{13} a_{12}^{-1},  a_{11} a_{12}^{-1} a_{21}] = 1,
$$
$$
[\lambda_{14}, \lambda_{32}] = 1 \Leftrightarrow [a_{13} a_{12}^{-1},  b_{21} b_{12}^{-1} b_{11}] = 1,
$$
$$
[\lambda_{41}, \lambda_{23}] = 1 \Leftrightarrow [b_{12}^{-1} b_{13},  a_{11} a_{12}^{-1} a_{21}] = 1,
$$
$$
[\lambda_{41}, \lambda_{32}] = 1 \Leftrightarrow [b_{12}^{-1} b_{13},  b_{21} b_{12}^{-1} b_{11}] = 1.
$$

The first relation
gives the following conjugation rule by $a_{11}$:
\begin{equation} \label{r4.21}
\left( a_{13} a_{12}^{-1} \right)^{a_{11}} = \left( a_{13} a_{12}^{-1} \right)^{a_{21}^{-1} a_{12}}.
\end{equation}
The third  relation
gives the following conjugation rule
\begin{equation} \label{r4.22}
\left( b_{12}^{-1} b_{13} \right)^{a_{11}} = \left( b_{12}^{-1} b_{13}  \right)^{a_{21}^{-1} a_{12}}.
\end{equation}

Since $b_{ij} = c_{ij} a_{ij}^{-1}$, then
$$
b_{21} b_{12}^{-1} b_{11} = c_{21} a_{21}^{-1} a_{12} c_{12}^{-1} c_{11} a_{11}^{-1},
$$
and the second relation:
$$
[a_{13} a_{12}^{-1}, b_{21} b_{12}^{-1} b_{11}] = [a_{13} a_{12}^{-1}, c_{21} a_{21}^{-1} a_{12} c_{12}^{-1} c_{11} a_{11}^{-1}] = 1
$$
gives the following relation
$$
\left( a_{13} a_{12}^{-1} \right)^{a_{11} c_{11}^{-1} c_{12} a_{12}^{-1} a_{21} c_{21}^{-1}} =  a_{13} a_{12}^{-1}.
$$
Since $c_{12} a_{12}^{-1} \cdot a_{21} c_{21}^{-1} = a_{21} c_{21}^{-1} \cdot c_{12} a_{12}^{-1}$, then
$$
\left( a_{13} a_{12}^{-1} \right)^{a_{11} c_{11}^{-1} a_{21} c_{21}^{-1} \cdot c_{12}} =  (a_{13} a_{12}^{-1})^{ a_{12}}.
$$

Using  (\ref{r4.21})
 rewrite this  relation in the form
$$
\left( a_{13} a_{12}^{-1} \right)^{a_{21}^{-1} a_{12} c_{11}^{-1} a_{21} c_{21}^{-1} c_{12}} = \left( a_{13} a_{12}^{-1} \right)^{ a_{12}}.
$$
Since $a_{21}^{-1} a_{12} = a_{12} a_{21}^{-1}$, it is equivalent to
$$
\left( a_{12}^{-1} a_{13} \right)^{a_{21}^{-1} c_{11}^{-1} a_{21} c_{21}^{-1} c_{12}} =  a_{12}^{-1} a_{13}.
$$
Since $[a_{21}, c_{11}] = 1$, then
$$
\left( a_{12}^{-1} a_{13} \right)^{ c_{11}^{-1}  c_{21}^{-1} c_{12}} =  a_{12}^{-1} a_{13}.
$$
Using a conjugation formula by $c_{11}^{-1}$ we get
$$
\left( a_{12}^{-c_{12}^{-1} c_{21}} a_{13}^{c_{13}^{-1} c_{22}} \right)^{c_{21}^{-1} c_{12}} =  a_{12}^{-1} a_{13}.
$$
Hence
\begin{equation} \label{r3}
a_{13}^{c_{13}^{-1} c_{22}} = a_{13}^{c_{12}^{-1} c_{21}}.
\end{equation}

Similarly, the forth   relation
has the form
$$
\left( b_{12}^{-1} b_{13} \right)^{a_{11} c_{11}^{-1} c_{12} a_{12}^{-1} a_{21} c_{21}^{-1}} =  b_{12}^{-1} b_{13}.
$$

Using  (\ref{r4.22})
 rewrite  the forth relations in the form
$$
\left(  b_{12}^{-1} b_{13} \right)^{a_{21}^{-1} a_{12} c_{11}^{-1} c_{12} a_{12}^{-1} a_{21} c_{21}^{-1}} = b_{12}^{-1} b_{13}.
$$
Using the relation $c_{12} a_{12}^{-1} \cdot a_{21} c_{21}^{-1} = a_{21} c_{21}^{-1} \cdot c_{12} a_{12}^{-1}$ and $b_{12}^{-1} b_{13} = a_{12} c_{12}^{-1} c_{13} a_{13}^{-1} $  we can present this relation in the form
$$
\left( c_{12}^{-1} c_{13} a_{13}^{-1} a_{12} \right)^{a_{21}^{-1} c_{11}^{-1} a_{21} c_{21}^{-1} c_{12}} = c_{12}^{-1} c_{13} a_{13}^{-1} a_{12}.
$$

Since $[a_{21}, c_{11}] = 1$, we have
$$
\left( c_{12}^{-1} c_{13} a_{13}^{-1} a_{12} \right)^{ c_{11}^{-1}  c_{21}^{-1} c_{12}} = c_{12}^{-1} c_{13} a_{13}^{-1} a_{12}.
$$
Using the formulas of conjugating by $c_{11}^{-1}$ we get
$$
c_{13}^{c_{22}} a_{13}^{-c_{13}^{-1} c_{22}} = \left( c_{13} a_{13}^{-1} \right)^{c_{12}^{-1} c_{21}}.
$$
Using (\ref{r3}) we have
$$
c_{13}^{c_{22}}  =  c_{13}^{c_{12}^{-1} c_{21}}.
$$
Using a conjugation formula by $c_{12}^{-1}$
$$
c_{13}^{c_{22}}  = \left( c_{13}^{c_{31}} \right)^{c_{21}}.
$$
Conjugating both sides by $c_{21}^{-1}$
$$
c_{22}^{-c_{21}^{-1}} c_{13}^{c_{21}^{-1}} c_{22}^{c_{21}^{-1}} = c_{13}^{c_{31}}.
$$
Using the conjugation rules by $c_{21}^{-1}$
$$
c_{22}^{-c_{31}} c_{13}^{c_{21}^{-1}} c_{22}^{c_{31}} = c_{13}^{c_{31}},
$$
or
\begin{equation} \label{r41}
c_{13}^{c_{21}^{-1}} = c_{13}^{c_{22}^{-1} c_{31}}.
\end{equation}

\smallskip

Now come back to the relation (\ref{r3}) and write it in the form
$$
a_{13}^{c_{13}^{-1} c_{22} c_{21}^{-1}} = a_{13}^{c_{12}^{-1}}.
$$
Using the conjugation formulas, rewrite the left side, we arrive to relation
$$
(c_{22}^{-1})^{c_{21}^{-1}} c_{13}^{c_{21}^{-1}}  a_{13}^{c_{21}^{-1}} c_{13}^{-c_{21}^{-1}} c_{22}^{c_{21}^{-1}} = a_{13}^{c_{12}^{-1}}.
$$
Using the conjugation rules, we get
$$
c_{22}^{-c_{31}} c_{13}^{c_{22}^{-1} c_{31}}  a_{13}^{c_{21}^{-1}} c_{13}^{-c_{22}^{-1} c_{31}} c_{22}^{c_{31}} = a_{13}^{c_{13}^{-1} c_{31}}.
$$
It is equivalent to
$$
a_{13}^{c_{21}^{-1}} = c_{13}^{-c_{22}^{-1} c_{31}} c_{22}^{c_{31}} a_{13}^{c_{13}^{-1} c_{31}} c_{22}^{-c_{31}} c_{13}^{c_{22}^{-1} c_{31}}
$$
and after cancelations
$$
a_{13}^{c_{21}^{-1}} = a_{13}^{c_{22}^{-1} c_{31}}.
$$

Since $b_{13} = c_{13} a_{13}^{-1}$, then using the last relation and relation (\ref{r41}), we get
$$
b_{13}^{c_{21}^{-1}} = b_{13}^{c_{22}^{-1} c_{31}}.
$$
Hence, we proved

\begin{lem}
The commutativity relations $[\lambda_{14}^*, \lambda_{23}^*] = 1$ in $VP_4$ give the following conjugation formulas by $a_{11}$:
$$
\left( a_{13} a_{12}^{-1} \right)^{a_{11}} = \left( a_{13} a_{12}^{-1} \right)^{ a_{21}^{-1} a_{12}},~~~
\left( b_{12}^{-1} b_{13} \right)^{a_{11}} = \left( b_{12}^{-1} b_{13} \right)^{a_{21}^{-1} a_{12}},
$$
and the conjugation formulas by $c_{21}^{-1}$:
$$
a_{13}^{c_{21}^{-1}} = a_{13}^{c_{22}^{-1} c_{31}},~~~
b_{13}^{c_{21}^{-1}} = b_{13}^{c_{22}^{-1} c_{31}}.
$$
\end{lem}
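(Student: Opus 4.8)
The plan is to rewrite each of the four commutator relations $[\lambda_{14}^*,\lambda_{23}^*]=1$ in the $a_{ij}, b_{ij}$ generators, using the identities $\lambda_{14}=a_{13}a_{12}^{-1}$, $\lambda_{23}=a_{11}a_{12}^{-1}a_{21}$, $\lambda_{41}=b_{12}^{-1}b_{13}$, $\lambda_{32}=b_{21}b_{12}^{-1}b_{11}$ recorded at the start of the subsection, and then to peel off the conjugation formulas one at a time. Relation $[\lambda_{14},\lambda_{23}]=1$ reads $(a_{13}a_{12}^{-1})^{a_{11}a_{12}^{-1}a_{21}}=a_{13}a_{12}^{-1}$, and splitting the conjugator immediately yields the first claimed $a_{11}$-formula $(a_{13}a_{12}^{-1})^{a_{11}}=(a_{13}a_{12}^{-1})^{a_{21}^{-1}a_{12}}$, which is (\ref{r4.21}); relation $[\lambda_{41},\lambda_{23}]=1$ gives the companion (\ref{r4.22}) for $b_{12}^{-1}b_{13}$ in exactly the same way. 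Thus the two $a_{11}$-formulas in the statement come out directly.

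The real work is extracting the $c_{21}^{-1}$-formulas from the two remaining relations. First I would substitute $b_{ij}=c_{ij}a_{ij}^{-1}$, so that $\lambda_{32}=c_{21}a_{21}^{-1}a_{12}c_{12}^{-1}c_{11}a_{11}^{-1}$, which makes $[\lambda_{14},\lambda_{32}]=1$ assert that $a_{13}a_{12}^{-1}$ is fixed under conjugation by this word. Moving factors past one another via the mixed commutation $c_{12}a_{12}^{-1}\cdot a_{21}c_{21}^{-1}=a_{21}c_{21}^{-1}\cdot c_{12}a_{12}^{-1}$, using $[a_{21},c_{11}]=1$, and feeding in (\ref{r4.21}), I expect the conjugator to collapse so that $a_{12}^{-1}a_{13}$ is fixed by $c_{11}^{-1}c_{21}^{-1}c_{12}$; applying the conjugation-by-$c_{11}^{-1}$ rules from the presentations of $VP_3$ and $s_2(VP_3)$ should then produce the intermediate relation $a_{13}^{c_{13}^{-1}c_{22}}=a_{13}^{c_{12}^{-1}c_{21}}$, namely (\ref{r3}). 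A parallel reduction of $[\lambda_{41},\lambda_{32}]=1$, now using (\ref{r4.22}) together with the freshly obtained (\ref{r3}), should give $c_{13}^{c_{22}}=c_{13}^{c_{12}^{-1}c_{21}}$, and conjugating by $c_{21}^{-1}$ with the $c_{21}^{-1}$-rules for $c_{22},c_{13}$ coming from $s_0(VP_3)$ and $s_1(VP_3)$ yields (\ref{r41}): $c_{13}^{c_{21}^{-1}}=c_{13}^{c_{22}^{-1}c_{31}}$.

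Finally I would fuse the two intermediate relations. Rewriting (\ref{r3}) as $a_{13}^{c_{13}^{-1}c_{22}c_{21}^{-1}}=a_{13}^{c_{12}^{-1}}$, expanding the left-hand conjugation with the known $c_{21}^{-1}$-rules and substituting (\ref{r41}) for the inner $c_{13}^{c_{21}^{-1}}$, the $c_{22}^{\pm c_{31}}$ and $c_{13}^{\pm c_{22}^{-1}c_{31}}$ factors should cancel in conjugate pairs, leaving $a_{13}^{c_{21}^{-1}}=a_{13}^{c_{22}^{-1}c_{31}}$. The $b_{13}$-formula then drops out of $b_{13}=c_{13}a_{13}^{-1}$ by combining this with (\ref{r41}).

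I expect the main obstacle to be bookkeeping rather than anything conceptual. Each collapse of a conjugator rests on a specific commutation --- $[a_{21},c_{11}]=1$, the sliding of $c_{12}a_{12}^{-1}$ past $a_{21}c_{21}^{-1}$, and the conjugation-by-$c_{11}^{-1}$ and $c_{21}^{-1}$ rules --- and one must check that every such step is genuinely forced by the presentations of $VP_3$ and its degeneracies before invoking it. The delicate points are getting the exact exponents in (\ref{r3}) and (\ref{r41}) right and confirming that the final simplification is an exact cancellation with no residual commutator left over; once the bookkeeping is arranged carefully these are routine, which is why I would not grind through them here.
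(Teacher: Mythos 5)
Your proposal follows the paper's own proof essentially step for step: the same extraction of the two $a_{11}$-conjugation formulas from $[\lambda_{14},\lambda_{23}]=1$ and $[\lambda_{41},\lambda_{23}]=1$, the same reduction of $[\lambda_{14},\lambda_{32}]=1$ (via $b_{ij}=c_{ij}a_{ij}^{-1}$, the commutation of $c_{12}a_{12}^{-1}$ with $a_{21}c_{21}^{-1}$, $[a_{21},c_{11}]=1$, and the $c_{11}^{-1}$-rules) to the intermediate relation $a_{13}^{c_{13}^{-1}c_{22}}=a_{13}^{c_{12}^{-1}c_{21}}$, the same use of $[\lambda_{41},\lambda_{32}]=1$ to get $c_{13}^{c_{21}^{-1}}=c_{13}^{c_{22}^{-1}c_{31}}$, and the same final fusion and cancellation yielding $a_{13}^{c_{21}^{-1}}=a_{13}^{c_{22}^{-1}c_{31}}$ and then the $b_{13}$-formula. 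The bookkeeping you defer is exactly what the paper carries out, and it goes through as you predict, so the proposal is correct and matches the paper's argument.
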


\subsection{$VP_4$ as HNN-extension} From the relations of commutativity in $VP_4$  we got the following conjugation formulas by element $a_{11}$:
$$
\left( a_{21} a_{22}^{-1} a_{31} \right)^{a_{11}} = a_{21} a_{22}^{-1} a_{31},~~~\left( b_{31} b_{22}^{-1} b_{21} \right)^{a_{11}} = b_{31} b_{22}^{-1} b_{21},
$$
$$
\left( a_{12} a_{13}^{-1} a_{22} a_{21}^{-1} \right)^{a_{11}} = \left( a_{12} a_{13}^{-1} a_{22} a_{21}^{-1} \right)^{a_{12}},~~~
\left( b_{21}^{-1} b_{22} b_{13}^{-1} b_{12} \right)^{a_{11}} = \left( b_{21}^{-1} b_{22} b_{13}^{-1} b_{12}\right)^{a_{12}},
$$
$$
\left( a_{13} a_{12}^{-1} \right)^{a_{11}} = \left( a_{13} a_{12}^{-1} \right)^{ a_{21}^{-1} a_{12}},~~~
\left( b_{12}^{-1} b_{13} \right)^{a_{11}} = \left( b_{12}^{-1} b_{13} \right)^{a_{21}^{-1} a_{12}}.
$$

Denote
$$
A_a = \langle a_{21} a_{22}^{-1} a_{31},~~a_{12} a_{13}^{-1} a_{22} a_{21}^{-1},~~ a_{13} a_{12}^{-1} \rangle.
$$
We see that
$$
A_a = \langle \lambda_{34},~~\lambda_{24},~~ \lambda_{14} \rangle.
$$

Denote
$$
A_b = \langle b_{31} b_{22}^{-1} b_{21},~~b_{21}^{-1} b_{22} b_{13}^{-1} b_{12},~~ b_{12}^{-1} b_{13} \rangle.
$$
We see that
$$
A_b = \langle \lambda_{43},~~\lambda_{42},~~ \lambda_{41} \rangle.
$$

Also denote
$$
B_a = \langle a_{21} a_{22}^{-1} a_{31},~~\left( a_{12} a_{13}^{-1} a_{22} a_{21}^{-1} \right)^{a_{12}},~~ \left( a_{13} a_{12}^{-1} \right)^{ a_{21}^{-1} a_{12}} \rangle = \langle \lambda_{34},~~\lambda_{24}^{\lambda_{12}},~~ \lambda_{14}^{\lambda_{12}} \rangle
$$
and
$$
B_b = \langle b_{31} b_{22}^{-1} b_{21},~~\left( b_{21}^{-1} b_{22} b_{13}^{-1} b_{12}\right)^{a_{12}},~~ \left( b_{12}^{-1} b_{13} \right)^{a_{21}^{-1} a_{12}} \rangle = \langle  \lambda_{43},~~\lambda_{42}^{\lambda_{12}},~~ \lambda_{41}^{\lambda_{12}}\rangle.
$$
We see that $B_a = A_a^{a_{11}}$, $B_b = A_b^{a_{11}}$. Put $A = \langle A_a, A_b \rangle$, $B = \langle B_a, B_b \rangle$. Since $B$ is conjugate with $A$, then $A$ is isomorphic to $B$ and we get

\begin{thm} \label{t4.11}
$VP_4$ is the HNN-extension with the base group
$$
G_4 = \langle c_{11}, a_{21}, a_{12}, c_{21}, c_{12}, a_{31}, a_{22}, a_{13}, b_{31}, b_{22}, b_{13} \rangle
$$
associated subgroups $A$ and $B$,  stable letter $a_{11}$. $G_4$ is defined by the following relations
(here $\varepsilon = \pm 1$):

1) conjugations by  $c_{11}^{\varepsilon}$

$$
a_{21}^{c_{11}^{\varepsilon}} = a_{21},~~~a_{12}^{c_{11}^{\varepsilon}} = a_{12}^{c_{12}^{\varepsilon} c_{21}^{-\varepsilon}},~~~c_{21}^{c_{11}^{\varepsilon}} = c_{21},~~~c_{12}^{c_{11}^{\varepsilon}} = c_{12}^{c_{21}^{-\varepsilon}},
$$

$$
a_{31}^{c_{11}^{\varepsilon}} = a_{31},~~~a_{22}^{c_{11}^{\varepsilon}} = a_{22},~~~
a_{13}^{c_{11}^{\varepsilon}} = a_{13}^{c_{13}^{\varepsilon} c_{22}^{-\varepsilon}},~~~b_{31}^{c_{11}^{\varepsilon}} = b_{31},~~~
b_{22}^{c_{11}^{\varepsilon}} = b_{22},~~~
b_{13}^{c_{11}^{\varepsilon}} = b_{13}^{c_{13}^{\varepsilon} c_{22}^{-\varepsilon}},
$$

2) conjugations by  $c_{21}^{\varepsilon}$

$$
a_{31}^{c_{21}^{\varepsilon}} = a_{31},~~~a_{22}^{c_{21}^{\varepsilon}} = a_{22}^{c_{22}^{\varepsilon} c_{31}^{-\varepsilon}},~~~
a_{13}^{c_{21}^{\varepsilon}} = a_{13}^{c_{22}^{\varepsilon} c_{31}^{-\varepsilon}},~~~b_{31}^{c_{21}^{\varepsilon}} = b_{31},~~~
b_{22}^{c_{21}^{\varepsilon}} = b_{22}^{c_{22}^{\varepsilon} c_{31}^{-\varepsilon}},~~~
b_{13}^{c_{21}^{\varepsilon}} = b_{13}^{c_{22}^{\varepsilon} c_{31}^{-\varepsilon}},
$$

3) conjugations by  $c_{12}^{\varepsilon}$

$$
a_{31}^{c_{12}^{\varepsilon}} = a_{31},~~~
a_{13}^{c_{12}^{\varepsilon}} = a_{13}^{c_{13}^{\varepsilon} c_{31}^{-\varepsilon}},~~~b_{31}^{c_{12}^{\varepsilon}} = b_{31},~~~
b_{13}^{c_{12}^{\varepsilon}} = b_{13}^{c_{13}^{\varepsilon} c_{31}^{-\varepsilon}},
$$
$$
a_{22}^{c_{12}^{-1}} = a_{13}^{c_{13}^{-1} c_{31}} a_{13}^{-c_{13}^{-1} c_{22}} a_{22} [c_{21}, c_{12}^{-1}],~~
a_{22}^{c_{12}} = [c_{12}, c_{21}^{-1}]  a_{13}^{-c_{13} c_{22}^{-1}} a_{22}  a_{13}^{c_{13} c_{31}^{-1}},
$$
$$
b_{22}^{c_{12}^{-1}} = b_{13}^{c_{13}^{-1} c_{31}} b_{22} b_{13}^{-c_{13}^{-1} c_{22}}  [c_{21}, c_{12}^{-1}],~~
b_{22}^{c_{12}} = [c_{12}, c_{21}^{-1}] b_{22} b_{13}^{-c_{13} c_{22}^{-1}} b_{13}^{c_{13} c_{31}^{-1}}.
$$

4) commutativity relations
$$
[a_{21}, a_{12}] = [a_{31}, a_{22}] = [a_{31}, a_{13}] = [a_{22}, a_{13}] = 1,
$$
$$
[c_{21} a_{21}^{-1}, c_{12} a_{21}^{-1}] = [b_{31}, b_{22}] = [b_{31}, b_{13}] = [b_{22}, b_{13}] = 1.
$$

\end{thm}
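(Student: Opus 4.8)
The plan is to read this theorem as the \emph{assembly} of all the preceding lemmas into a single HNN presentation, so I would proceed by (i) fixing the generating set, (ii) transporting a complete presentation of $VP_4$ to the new generators, (iii) sorting the resulting relations into ``base'' relations and ``stable-letter'' relations, and (iv) recognizing the outcome as an HNN presentation. First I would record the generators. Since $VP_4=\langle T_1,T_2,T_3\rangle$ with $T_1=\langle a_{11},b_{11}\rangle$, and since $b_{ij}=c_{ij}a_{ij}^{-1}$ gives $b_{11}=c_{11}a_{11}^{-1}$, $b_{21}=c_{21}a_{21}^{-1}$, $b_{12}=c_{12}a_{12}^{-1}$, every generator of $VP_4$ lies in $\langle G_4,a_{11}\rangle$ and conversely, so $VP_4=\langle G_4,a_{11}\rangle$ with $a_{11}$ singled out as the prospective stable letter.

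Next I would invoke the presentation of $VP_4$ supplied by the simplicial generators: by Proposition~\ref{p3.3}(3) with $n=3$, $VP_4$ is generated by $VP_3$ together with $s_0(VP_3)$, $s_1(VP_3)$, $s_2(VP_3)$, and a defining set of relations is obtained from the defining relations of these four subgroups (computed in Corollary~\ref{c4.2} and in items 1)--3) of this subsection) together with the commutativity relations (\ref{r4.1}). The key observation is that none of the relations of $VP_3,s_0(VP_3),s_1(VP_3),s_2(VP_3)$ involve $a_{11}$: in $VP_3$ and in $s_2(VP_3)$ the generator $a_{11}$ occurs as a free factor (Proposition~\ref{p4.1}, where $VP_3=\langle T_2,c_{11}\rangle*\langle a_{11}\rangle$), and it does not occur at all in $s_0(VP_3)$ or $s_1(VP_3)$. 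Hence all of these are relations of the base group $G_4$; they account for the $c_{11}$-conjugation relations, for the commutativity relations in parts 1) and 4) of the statement, and for the $c_{21}$- and $c_{12}$-conjugation relations inherited from $s_0(VP_3)$ and $s_1(VP_3)$.

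The core of the argument is the analysis of the twelve relations (\ref{r4.1}). Here I would invoke Lemma~\ref{l4.6}, Lemma~\ref{l4.7}, Lemma~\ref{l4.81} with its accompanying corollary, and the final lemma of the subsection: each of the three families $[\lambda_{34}^*,\lambda_{12}^*]=1$, $[\lambda_{24}^*,\lambda_{13}^*]=1$, $[\lambda_{14}^*,\lambda_{23}^*]=1$ is shown to be equivalent to a pair of conjugation formulas by $a_{11}$ together with a pair of identities free of $a_{11}$ (conjugations by $c_{11}$, by $c_{12}^{\pm1}$, and by $c_{21}^{-1}$, respectively). I would place the six $a_{11}$-free identities among the relations of $G_4$ --- these are precisely the remaining relations in parts 2) and 3) of the statement --- and set aside the six conjugation-by-$a_{11}$ relations collected at the start of the subsection. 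After this sorting the presentation reads: generators of $G_4$ and $a_{11}$; relations of $G_4$; and six relations expressing $a_{11}^{-1}x\,a_{11}$, for $x$ a listed generator of $A=\langle A_a,A_b\rangle$, as the corresponding generator of $B=\langle B_a,B_b\rangle$. Finally I would recognize this as an HNN presentation: by construction $B=A^{a_{11}}$, so conjugation by $a_{11}$ restricts to a map $A\to B$ carrying the generators of $A$ to those of $B$; being the restriction of the inner automorphism $x\mapsto x^{a_{11}}$ of $VP_4$ it is injective, and it is onto $B$ by the definition of $B$, hence an isomorphism. Thus the six stable-letter relations are exactly the HNN relations $a_{11}^{-1}A\,a_{11}=B$, and the displayed presentation is that of the HNN extension of $G_4$ with associated subgroups $A,B$ and stable letter $a_{11}$.

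I expect the main obstacle to be the completeness claim used in the second step --- that the relations of the four subgroups together with (\ref{r4.1}) genuinely present $VP_4$, with no further relations required --- and, relatedly, ensuring that the splitting of the commutativity relations in the third step is a true equivalence rather than a one-way implication, so that nothing is lost when the $a_{11}$-conjugation relations are extracted. The isomorphism $A\cong B$ itself is painless, since $B$ is a conjugate of $A$ inside the ambient group $VP_4$.
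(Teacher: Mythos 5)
Your proposal is correct and takes essentially the same route as the paper: both assemble the presentation of $VP_4$ from those of $VP_3$ and $s_i(VP_3)$, $i=0,1,2$, together with the commutativity relations, use Lemmas~\ref{l4.6}, \ref{l4.7}, \ref{l4.81} and the last lemma of the subsection to split those commutativity relations into six $a_{11}$-conjugation rules and six $a_{11}$-free relations assigned to $G_4$, and then read off the HNN structure from $B=A^{a_{11}}$. The completeness claim you flag as the main obstacle is likewise the step the paper handles by assertion (that the subgroup presentations plus (\ref{r4.1}) present $VP_4$) rather than by a detailed argument, so your treatment matches the paper's level of rigor there as well.
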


Hence, to find defining relations of $T_3$ we need to study $G_4$.

Define the following subgroup of $G_4$:
$$
Q = \langle a_{21}, a_{12}, c_{21}, c_{12}, a_{31}, a_{22}, a_{13}, b_{31}, b_{22}, b_{13} \rangle.
$$
From relations 1) of Theorem \ref{t4.11} follows that $Q$ is normal in $G_4$ and is the kernel of the homomorphism
$$
G_4 \longrightarrow \langle c_{11} \rangle
$$
which sends $c_{11}$ to $c_{11}$ and sends all other generators to 1.
Similarly to the case $VP_3$ one can  see that $Q$  is defined by relations which come from
 relations 2) -- 4) of  Theorem \ref{t4.11} by conjugation $c_{11}^k$, $k \in \mathbb{Z}$. Using the defining relations of $G_4$ one can prove that all conjugations of relations 2) -- 3) are equivalent to relations 2) -- 3). Hence,

 \begin{lem} \label{l5.13}
 The group $Q$ is defined by relations 2) -- 3) of  Theorem \ref{t4.11} and relations
$$
[a_{21}, a_{12}]^{c_{11}^k} = [a_{31}, a_{22}]^{c_{11}^k} = [a_{31}, a_{13}]^{c_{11}^k} = [a_{22}, a_{13}]^{c_{11}^k} = 1,
$$
$$
[c_{21} a_{21}^{-1}, c_{12} a_{21}^{-1}]^{c_{11}^k} = [b_{31}, b_{22}]^{c_{11}^k} = [b_{31}, b_{13}]^{c_{11}^k} = [b_{22}, b_{13}]^{c_{11}^k} = 1,
$$
 that can be written in the form
$$
[a_{21}, a_{12}^{c_{12}^k c_{21}^{-k}}] = [a_{31}, a_{22}] = [a_{31}, a_{13}^{c_{13}^k c_{22}^{-k}}] = [a_{22}, a_{13}^{c_{13}^k c_{22}^{-k}}] = 1,
$$
$$
[c_{21} a_{21}^{-1}, c_{12}^{c_{21}^{-k}} a_{21}^{-c_{12}^k c_{21}^{-k}}] = [b_{31}, b_{22}] = [b_{31}, b_{13}^{c_{13}^k c_{22}^{-k}}] = [b_{22}, b_{13}^{c_{13}^k c_{22}^{-k}}] = 1,
$$
for all integer  numbers $k$.
\end{lem}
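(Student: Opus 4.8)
The plan is to run the Reidemeister--Schreier procedure for the epimorphism $G_4 \to \langle c_{11}\rangle$ whose kernel is $Q$, exactly as was done for $T_2$ inside $\langle T_2, c_{11}\rangle$ in the treatment of $VP_3$. Since $\langle c_{11}\rangle$ is infinite cyclic, I take the Schreier transversal $\{c_{11}^k : k \in \mathbb{Z}\}$; then $Q$ is generated by the rewritten elements $c_{11}^{-k} g\, c_{11}^k$, where $g$ ranges over the ten generators $a_{21}, a_{12}, c_{21}, c_{12}, a_{31}, a_{22}, a_{13}, b_{31}, b_{22}, b_{13}$ and $k\in\mathbb{Z}$, and its defining relations are the words $c_{11}^{-k}\tau(r)c_{11}^k$ obtained from the defining relations $r$ of $G_4$ by the rewriting process $\tau$.

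First I would dispose of the generators. The relations 1) of Theorem \ref{t4.11} are precisely the conjugation rules for $c_{11}$, so each $c_{11}^{-1} g\, c_{11}$ is rewritten as a word in the $k=0$ copies of the generators; by induction on $|k|$ every $c_{11}^{-k} g\, c_{11}^k$ is such a word, and hence $Q$ is generated by the ten listed elements alone. In the process the relations 1) are consumed, and the surviving defining relations of $Q$ are the $c_{11}^k$-conjugates of relations 2), 3) and 4). At this point I would record, by induction from relations 1)--3) and in the spirit of Lemma \ref{l4.12}, the iterated conjugation formulas
\begin{align*}
& c_{21}^{c_{11}^k}=c_{21},\quad c_{22}^{c_{11}^k}=c_{22},\quad c_{31}^{c_{11}^k}=c_{31},\quad c_{12}^{c_{11}^k}=c_{12}^{c_{21}^{-k}},\quad c_{13}^{c_{11}^k}=c_{13}^{c_{22}^{-k}},\\
& a_{12}^{c_{11}^k}=a_{12}^{c_{12}^k c_{21}^{-k}},\quad a_{13}^{c_{11}^k}=a_{13}^{c_{13}^k c_{22}^{-k}},\quad b_{13}^{c_{11}^k}=b_{13}^{c_{13}^k c_{22}^{-k}},
\end{align*}
which turn every $c_{11}$-conjugation into an explicit word in the ten generators; here one uses that $c_{21},c_{22},c_{31}$ (equivalently $b_{31}a_{31}$ etc.) commute with $c_{11}$, while $c_{12},c_{13}$ are shifted.

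The main work, and the step I expect to be the real obstacle, is to show that the $c_{11}^k$-conjugates of the conjugation relations 2) and 3) add nothing new, i.e.\ each is a consequence of 2) and 3) themselves. Here I would conjugate a typical relation such as $a_{13}^{c_{21}}=a_{13}^{c_{22}c_{31}^{-1}}$ or $a_{22}^{c_{12}}=[c_{12},c_{21}^{-1}]\,a_{13}^{-c_{13}c_{22}^{-1}}\,a_{22}\,a_{13}^{c_{13}c_{31}^{-1}}$ by $c_{11}$, substitute the conjugation formulas above, and check that after simplification the result collapses back to the original relation in 2)--3); iterating then gives the claim for all $k$. This is exactly the phenomenon that makes relations 2)--3) \emph{$c_{11}$-stable}, and carrying it out is purely a matter of verifying each relation of 2) and 3) in turn, which is why it is the delicate part of the argument.

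Finally, for relations 4) I would simply conjugate each commutator relation by $c_{11}^k$ and feed in the conjugation formulas. Since $a_{21},a_{31},a_{22},b_{31},b_{22}$ and $c_{21}a_{21}^{-1}$ are fixed by $c_{11}$ while $a_{12},a_{13},b_{13},c_{12}$ are shifted as recorded above, the conjugated commutators become exactly the families $[a_{21},a_{12}^{c_{12}^k c_{21}^{-k}}]$, $[a_{31},a_{22}]$, $[a_{31},a_{13}^{c_{13}^k c_{22}^{-k}}]$, $[a_{22},a_{13}^{c_{13}^k c_{22}^{-k}}]$ and their $b$-analogues, together with the family arising from the remaining commutator $[c_{21}a_{21}^{-1},\,c_{12}a_{12}^{-1}]$, matching the stated presentation and completing the proof.
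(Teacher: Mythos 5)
Your proposal follows exactly the paper's route: the paper likewise identifies $Q$ as the kernel of $G_4 \to \langle c_{11}\rangle$, invokes the Reidemeister--Schreier method with transversal $\{c_{11}^k\}$ (by reference to the earlier $VP_3$/$T_2$ computation), uses relations 1) to eliminate the redundant generators, asserts that the $c_{11}^k$-conjugates of relations 2)--3) reduce back to relations 2)--3), and retains only the conjugated commutativity relations 4) in the form given by the iterated conjugation formulas. Since the paper also leaves the $c_{11}$-stability of relations 2)--3) as an asserted computation ("one can prove"), your proposal is correct at the same level of detail and is essentially identical to the paper's proof.
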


Now we can prove the main result of the present paper.

\begin{thm}\label{T_3}
The group
$$
T_3 = \langle a_{31},~~a_{22},~~a_{13},~~b_{31},~~b_{22},~~b_{13} \rangle
$$
is defined by relations
$$
[a_{31}, a_{22}^{c_{22}^{m} c_{31}^{-m}}] = [a_{31}, a_{13}^{c_{13}^{k} c_{22}^{m-k} c_{31}^{-m}}] = [a_{22}^{c_{22}^m c_{31}^{-m}}, a_{13}^{c_{13}^k c_{22}^{m-k} c_{31}^{-m}}] = 1,
$$
$$
[b_{31}, b_{22}^{c_{22}^{m} c_{31}^{-m}}] = [b_{31}, b_{13}^{c_{13}^{k} c_{22}^{m-k} c_{31}^{-m}}] = [b_{22}^{c_{22}^m c_{31}^{-m}}, b_{13}^{c_{13}^k c_{22}^{m-k} c_{31}^{-m}}] = 1.
$$
where $k, m \in \mathbb{Z}$.
\end{thm}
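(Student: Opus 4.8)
The plan is to obtain the presentation of $T_3$ from the presentation of $Q$ given in Lemma~\ref{l5.13} by a Reidemeister--Schreier computation, exactly paralleling the passage from $\langle T_2, c_{11}\rangle$ to $T_2$ in Proposition~\ref{l4.13}, but now peeling off \emph{two} commuting stable letters instead of one. First I would record that, by the conjugation rules 1) and 2) of Theorem~\ref{t4.11}, both $c_{11}$ and $c_{21}$ normalize $T_3$: each of $a_{31},a_{22},a_{13},b_{31},b_{22},b_{13}$ is carried by conjugation with $c_{11}^{\pm1}$ or $c_{21}^{\pm1}$ to a word in the six generators, since $c_{22}=b_{22}a_{22}$, $c_{31}=b_{31}a_{31}$, $c_{13}=b_{13}a_{13}$ all lie in $T_3$. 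Together with $[c_{11},c_{21}]=1$ this produces a subgroup $R=\langle T_3, c_{11}, c_{21}\rangle$ in which $T_3$ is normal with quotient $\langle \bar c_{11},\bar c_{21}\rangle\cong\mathbb{Z}^2$, so that $T_3=\ker(R\to\mathbb{Z}^2)$ and $\{\,c_{11}^k c_{21}^m : k,m\in\mathbb{Z}\,\}$ is a Schreier transversal.

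Next I would run Reidemeister--Schreier. The Schreier generators of $T_3$ are the conjugates $c_{21}^{-m}c_{11}^{-k}\,x\,c_{11}^k c_{21}^m$ with $x$ one of the six generators, and the conjugation rules 1)--2) rewrite each such conjugate as a word in $a_{31},a_{22},a_{13},b_{31},b_{22},b_{13}$, so no new generators appear and $T_3$ is generated by the six listed elements. The defining relations are the conjugates by $c_{11}^k c_{21}^m$ of the defining relations of $R$; the conjugated conjugation-rules collapse to trivialities, and what survives are the $T_3$-internal commutativity relations $[a_{31},a_{22}]=[a_{31},a_{13}]=[a_{22},a_{13}]=1$ together with their $b$-analogues, each conjugated by $c_{11}^k c_{21}^m$.

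The final, purely computational, step is to evaluate these conjugates. Using 1) one gets $a_{13}^{c_{11}^k}=a_{13}^{c_{13}^k c_{22}^{-k}}$, $a_{22}^{c_{11}^k}=a_{22}$ and $a_{31}^{c_{11}^k}=a_{31}$, the exact analogues of Lemma~\ref{l4.12}; using 2) together with the induced relations $c_{31}^{c_{21}}=c_{31}$, $c_{22}^{c_{21}}=c_{22}^{c_{31}^{-1}}$, $c_{13}^{c_{21}}=c_{13}^{c_{22}c_{31}^{-1}}$ one iterates to $a_{22}^{c_{21}^m}=a_{22}^{c_{22}^m c_{31}^{-m}}$ and $a_{13}^{c_{21}^m}=a_{13}^{c_{22}^m c_{31}^{-m}}$. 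Composing the two actions gives $a_{31}^{c_{11}^k c_{21}^m}=a_{31}$, $a_{22}^{c_{11}^k c_{21}^m}=a_{22}^{c_{22}^m c_{31}^{-m}}$ and $a_{13}^{c_{11}^k c_{21}^m}=a_{13}^{c_{13}^k c_{22}^{m-k}c_{31}^{-m}}$, with identical formulas for the $b$-generators. Substituting these into the conjugated commutators yields precisely the relations in the statement; because $a_{31}$ (resp. $b_{31}$) is fixed, no extra symmetrizing conjugation is needed, in contrast to the $T_2$ case.

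The main obstacle I anticipate is twofold. The genuinely delicate point is the exponent bookkeeping in the last step: one must verify that conjugation by $c_{11}^k c_{21}^m$ sends $a_{13}$ to its conjugate by exactly $c_{13}^k c_{22}^{m-k}c_{31}^{-m}$, i.e. that $c_{13}^k c_{22}^{-k}c_{21}^m$ and $c_{13}^k c_{22}^{m-k}c_{31}^{-m}$ act identically on $a_{13}$, which hinges on the $P_4$-type commutation identities among $c_{13},c_{22},c_{31}$ and on $c_{31}$ commuting past the relevant factors. The second point, more bookkeeping than difficulty, is to confirm that restricting to the stable letters $c_{11},c_{21}$ is legitimate: one must check that $R$ is presented by the sub-collection of relations of $G_4$ involving only its generators, i.e. that the remaining relations of $G_4$ (the conjugation rules 3) by $c_{12}$, which do \emph{not} normalize $T_3$, and the $T_2$-type relations involving $a_{21},a_{12}$) impose nothing further on $T_3$. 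This is exactly why the argument deliberately avoids conjugating by $c_{12}$.
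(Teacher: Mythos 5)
Your proposal founders on the point you dismiss as ``more bookkeeping than difficulty'': obtaining a presentation of $R=\langle T_3, c_{11}, c_{21}\rangle$. To run Reidemeister--Schreier on the pair $(R,T_3)$ you must first know a presentation of $R$, and your claim that $R$ is presented by the sub-collection of relations of $G_4$ involving only its generators is exactly the kind of statement that is false for general subgroups generated by subsets of the generators: relations of $G_4$ involving the omitted generators $c_{12}, a_{21}, a_{12}$ can, a priori, impose further relations among the elements of $R$. In the present situation this is not a remote worry -- the relations 3) of Theorem~\ref{t4.11} are precisely the ones that mix $a_{22}, b_{22}$ with $a_{13}, b_{13}, c_{21}, c_{12}$ in a non-conjugation form, and there is no retraction of $G_4$ onto $R$ (killing $c_{12}, a_{21}, a_{12}$ does not respect those relations), so no soft argument shows they are harmless. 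Deliberately ``avoiding conjugation by $c_{12}$'' does not remove these relations from the presentation of $G_4$; one must prove they contribute nothing inside $T_3$, and that claim is essentially equivalent to the theorem itself. Your proposal is therefore circular at its structural core, even though your exponent bookkeeping at the end (e.g. $a_{13}^{c_{11}^k c_{21}^m}=a_{13}^{c_{13}^k c_{22}^{m-k} c_{31}^{-m}}$) is correct modulo that gap.

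The paper's proof is organized entirely around this difficulty. It peels off the generators one layer at a time: first $c_{11}$ (giving $Q$, Lemma~\ref{l5.13}), then $a_{21}, a_{12}$ by exhibiting $Q$ as a free product of $Q_1$ and $Q_2$ amalgamated over $\langle c_{21}, c_{12}\rangle$, then $c_{21}$ via the homomorphism $\varphi:Q_2\to\langle c_{21}\rangle$. The crucial final step handles the $c_{12}$-relations: setting $d_m=c_{12}^{c_{21}^m}$, the recursion coming from the relations for $a_{22}^{c_{12}^{-1}}$ and $b_{22}^{c_{12}}$ expresses every $d_m$ in terms of $d_0$ and elements of $T_3$, after which \emph{all} remaining relations of $\Ker(\varphi)$ take the form $d_0^{-1}(\mbox{word in }T_3)d_0=(\mbox{word in }T_3)$. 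This exhibits $\Ker(\varphi)$ as an HNN-extension with base $T_3$ and stable letter $d_0$, and it is the HNN structure theory (the base group embeds and carries no relations beyond those listed for it) that finally certifies that $T_3$ is defined by the commutativity relations alone. That argument, or something doing equivalent work, is what your proposal is missing.
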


\begin{proof}

In Lemma \ref{l5.13} we  have found a set of defining relations for $Q$. From this set follows that $Q$ is a free product of subgroups
$Q_1 = \langle c_{21}, c_{12},  a_{21}, a_{12}  \rangle$ and $Q_2 = \langle c_{21}, c_{12}, a_{31}, a_{22}, a_{13}, b_{31}, b_{22}, b_{13} \rangle$ with amalgamated
subgroup $Q_c = \langle c_{21}, c_{12} \rangle$. Hence, we have a set of defining relations for $Q_2$: it includes  relations of $Q$, that contains only generators of $Q_2$.

Now consider a homomorphism
$$
\varphi : Q_2 \longrightarrow \langle c_{21} \rangle,
$$
that is defined by the formulas
$$
\varphi (c_{21}) = c_{21},~~ \varphi (c_{12}) = \varphi (a_{31}) = \varphi (a_{22}) = \varphi (a_{13}) = \varphi (b_{31}) = \varphi (b_{22}) = \varphi (b_{13}) = 1.
$$
To find a presentation of $Ker(\varphi)$ take the set of coset representatives of this kernel in $Q_2$:
$$
\Lambda = \{ c_{21}^m ~|~ m \in \mathbb{Z} \}.
$$

Then  $Ker(\varphi)$ is generated by elements
$$
c_{12}^{c_{21}^m},~~a_{31}^{c_{21}^m},~~a_{22}^{c_{21}^m},~~a_{13}^{c_{21}^m},~~
b_{31}^{c_{21}^m},~~b_{22}^{c_{21}^m},~~b_{13}^{c_{21}^m}.
$$
Let us denote
$$
d_m = c_{12}^{c_{21}^m},~~~m \in \mathbb{Z}.
$$
Using the conjugations formulas by $c_{21}$ from Theorem \ref{t4.11}, we get
$$
a_{31}^{c_{21}^m} = a_{31},~~a_{22}^{c_{21}^m} = a_{22}^{c_{22}^m c_{31}^{-m}},~~~a_{13}^{c_{21}^m} = a_{13}^{c_{22}^m c_{31}^{-m}},
$$
$$
b_{31}^{c_{21}^m} = b_{31},~~b_{22}^{c_{21}^m} = b_{22}^{c_{22}^m c_{31}^{-m}},~~~b_{13}^{c_{21}^m} = b_{13}^{c_{22}^m c_{31}^{-m}}.
$$
Hence $Ker(\varphi)$ is generated by elements
$$
d_m, m \in \mathbb{Z}, ~a_{31}, a_{22}, a_{13}, b_{31}, b_{22}, b_{13}.
$$

To find a set of defining relations for $Ker(\varphi)$ we have to take the following relations in $Q$:
$$
a_{31}^{c_{12}} = a_{31},~~~
a_{13}^{c_{12}} = a_{13}^{c_{13} c_{31}^{-1}},~~~b_{31}^{c_{12}} = b_{31},~~~
b_{13}^{c_{12}} = b_{13}^{c_{13} c_{31}^{-1}},
$$
$$
a_{22}^{c_{12}^{-1}} = a_{13}^{c_{13}^{-1} c_{31}} a_{13}^{-c_{13}^{-1} c_{22}} a_{22} [c_{21}, c_{12}^{-1}],~~
b_{22}^{c_{12}^{-1}} = b_{13}^{c_{13}^{-1} c_{31}} b_{22} b_{13}^{-c_{13}^{-1} c_{22}}  [c_{21}, c_{12}^{-1}],
$$
$$
[a_{21}, a_{12}^{c_{12}^k c_{21}^{-k}}] = [a_{31}, a_{22}] = [a_{31}, a_{13}^{c_{13}^k c_{22}^{-k}}] = [a_{22}, a_{13}^{c_{13}^k c_{22}^{-k}}] = 1,
$$
$$
[c_{21} a_{21}^{-1}, c_{12}^{c_{21}^{-k}} a_{21}^{-c_{12}^k c_{21}^{-k}}] = [b_{31}, b_{22}] = [b_{31}, b_{13}^{c_{13}^k c_{22}^{-k}}] = [b_{22}, b_{13}^{c_{13}^k c_{22}^{-k}}] = 1,
$$
and conjugate them by $c_{21}^m$.

At first consider the relations
$$
a_{22}^{c_{12}^{-1}} = a_{13}^{c_{13}^{-1} c_{31}} a_{13}^{-c_{13}^{-1} c_{22}} a_{22} [c_{21}, c_{12}^{-1}],~~
b_{22}^{c_{12}^{-1}} = b_{13}^{c_{13}^{-1} c_{31}} b_{22} b_{13}^{-c_{13}^{-1} c_{22}}  [c_{21}, c_{12}^{-1}].
$$
These relations are equivalent to the following relations from Lemma \ref{l4.81}
\begin{equation} \label{r5.11}
\left( a_{13}^{-1} a_{22} \right)^{ c_{12}^{-1}} = \left( a_{13}^{-1} a_{22} \right)^{ c_{11}^{-1}}  [c_{21}, c_{12}^{-1}],
\end{equation}
\begin{equation} \label{r5.12}
\left( b_{22} b_{13}^{-1} \right)^{ c_{12}} = [c_{12}, c_{21}^{-1}] \left( b_{22} b_{13}^{-1} \right)^{ c_{11}}.
\end{equation}
Using the formulas of conjugations by $c_{11}^{-1}$, rewrite the relation (\ref{r5.11}) in the form
$$
d_0 \, a_{13}^{-1} \, a_{22} = a_{13}^{-c_{13}^{-1} c_{22}} \, a_{22} \, d_1.
$$
Conjugated it by $c_{21}^m$ we get
$$
d_m \, a_{13}^{-c_{22}^{m} c_{31}^{-m}}  a_{22}^{c_{22}^{m} c_{31}^{-m}} = a_{13}^{-c_{13}^{-1} c_{22}^{m} c_{22} c_{31}^{-m}} a_{22}^{c_{22}^{m} c_{31}^{-m}} d_{m+1}.
$$
Conjugated both sides of this relation by $c_{31}^{m}$ we  have
$$
d_m \, \left( a_{13}^{-1}  a_{22}\right)^{c_{22}^{m}} = a_{13}^{-c_{13}^{-1} c_{22}^{m+1}} a_{22}^{c_{22}^{m}} d_{m+1}.
$$
From these relations we have
$$
d_{m+1} =  \left( a_{22}^{-1}  a_{13}^{c_{13}^{-1} c_{22}} \right)^{c_{22}^{m}} \, d_m  \left( a_{13}^{-1}  a_{22} \right)^{c_{22}^{m}}~~\mbox{for}~m \geq 0,
$$
$$
d_{m} =  \left( a_{13}^{-c_{13}^{-1} c_{22}} a_{22} \right)^{c_{22}^{m}} \, d_{m+1}  \left( a_{22}^{-1}  a_{13} \right)^{c_{22}^{m}}~~\mbox{for}~m < 0.
$$
Analogously, from (\ref{r5.12}) we get the following formulas
$$
d_{m} =  \left( b_{13}  b_{22}^{-1}  \right)^{c_{22}^{m}} \, d_{m-1}  \left(  b_{22}  b_{13}^{-c_{13} c_{22}^{-1}} \right)^{c_{22}^{m}}~~\mbox{for}~m \geq 1,
$$
$$
d_{m-1} =  \left( b_{22} b_{13}^{-1}  \right)^{c_{22}^{m}} \, d_{m}  \left( b_{13}^{c_{13} c_{22}^{-1}} b_{22}^{-1} \right)^{c_{22}^{m}}~~\mbox{for}~m < 1.
$$

For further calculations introduce the notations
$$
A_1^{(l)} = \left( a_{22}^{-1}  \, a_{13}^{c_{13}^{-1} c_{22}} \right)^{c_{22}^{l}},~~~A_2^{(l)} = \left( a_{13}^{-1}  \, a_{22} \right)^{c_{22}^{l}},~~\overline{A}_i^{(l)} = \left( A_i^{(l)} \right)^{-1},
$$
$$
B_1^{(l)} = \left( b_{22}  \, b_{13}^{-c_{13} c_{22}^{-1}} \right)^{c_{22}^{l}},~~~B_2^{(l)} = \left( b_{13}  \, b_{22}^{-1} \right)^{c_{22}^{l}},~~\overline{B}_i^{(l)} = \left( B_i^{(l)} \right)^{-1},
$$
for all integers $l$.

Using these notations we express  $d_m$, $m \not= 0$ as  words, which depend only on $d_0^{\pm 1}$ and other generators of $Ker(\varphi)$. Using induction on $m$ we get:

for $m \geq 1$

$$
d_m = A_1^{(m)} \, A_1^{(m-1)} \ldots A_1^{(0)} \, d_0 \,  A_2^{(0)} \, A_2^{(1)} \ldots A_2^{(m)},
$$
$$
d_m = B_2^{(m)} \, B_2^{(m-1)} \ldots B_2^{(1)}\,  d_0 \, B_1^{(1)} \, B_1^{(2)} \ldots B_1^{(m)},
$$

for $m \leq -1$

$$
d_m = \overline{A}_1^{(m)} \overline{A}_1^{(m+1)} \ldots \overline{A}_1^{(-1)} \, d_0 \, \overline{A}_2^{(-1)} \overline{A}_2^{(-2)}  \ldots \overline{A}_2^{(m)},
$$
$$
d_m = \overline{B}_2^{(m+1)} \overline{B}_2^{(m+2)} \ldots \overline{B}_2^{(0)} \, d_0 \, \overline{B}_1^{(0)} \overline{B}_1^{(-1)}  \ldots \overline{B}_1^{(m+1)}.
$$

We see that left sides of these relations are equal, then equality of the right sides  gives  relations:

for $m \geq 1$

$$
d_0^{-1} \left(  \overline{A}_1^{(0)} \, \overline{A}_1^{(1)} \ldots \overline{A}_1^{(m)} \cdot  B_2^{(m)} \, B_2^{(m-1)} \ldots B_2^{(1)} \right) d_0 =
$$
$$
=A_2^{(0)} \, A_2^{(1)} \ldots A_2^{(m)} \cdot \overline{B}_1^{(m)} \overline{B}_1^{(m-1)} \ldots \overline{B}_1^{(1)},
$$

for $m \leq -1$

$$
d_0^{-1} \left( A_1^{(-1)} \, A_1^{(-2)} \ldots A_1^{(m)} \cdot \overline{ B}_2^{(m+1)} \, \overline{B}_2^{(m+2)} \ldots \overline{B}_2^{(0)} \right) d_0 =
$$
$$
=\overline{A}_2^{(-1)} \, \overline{A}_2^{(-2)} \ldots \overline{A}_2^{(m)} \cdot \overline{B}_1^{(m+1)} \overline{B}_1^{(m+2)} \ldots \overline{B}_1^{(0)}.
$$

Let us consider other relations.

1) Take the relation $c_{12}^{-1} a_{31} c_{12} = a_{31}$. Conjugating it by $c_{21}^m$ we get $d_{m}^{-1} a_{31} d_{m} = a_{31}$. Put instead $d_m$ its expressions we get:
$$
d_0^{-1} \left( \overline{A}_1^{(0)} \overline{A}_1^{(1)} \ldots \overline{A}_1^{(m)} \, a_{31} \, A_1^{(m)} A_1^{(m-1)} \ldots A_1^{(0)} \right) d_0 =
$$
$$
= A_2^{(0)} \, A_2^{(1)} \ldots A_2^{(m)} \, a_{31} \, \overline{A}_2^{(m)} \overline{A}_2^{(m-1)} \ldots \overline{A}_2^{(0)},~~\mbox{for}~~m \geq 1,
$$
and
$$
d_0^{-1} \left( A_1^{(-1)} A_1^{(-2)} \ldots A_1^{(m)} \, a_{31} \, \overline{A}_1^{(m)} \overline{A}_1^{(m+1)} \ldots \overline{A}_1^{(-1)} \right) d_0 =
$$
$$
= \overline{A}_2^{(-1)} \, \overline{A}_2^{(-2)} \ldots \overline{A}_2^{(m)} \, a_{31} \, A_2^{(m)} A_2^{(m+1)} \ldots A_2^{(-1)},~~\mbox{for}~~m \leq -1.
$$

Analogously, from relation $c_{12}^{-1} b_{31} c_{12} = b_{31}$ we get relations
$$
d_0^{-1} \left( \overline{B}_2^{(1)} \overline{B}_2^{(2)} \ldots \overline{B}_2^{(m)} \, b_{31} \, B_2^{(m)} B_2^{(m-1)} \ldots B_2^{(1)} \right) d_0 =
$$
$$
= B_1^{(1)} \, B_1^{(2)} \ldots B_1^{(m)} \, b_{31} \, \overline{B}_1^{(m)} \overline{B}_1^{(m-1)} \ldots \overline{B}_1^{(1)},~~\mbox{for}~~m \geq 1,
$$
and
$$
d_0^{-1} \left( B_2^{(0)} B_2^{(-1)} \ldots B_2^{(m+1)} \, b_{31} \, \overline{B}_2^{(m+1)} \overline{B}_2^{(m+2)} \ldots \overline{B}_2^{(0)} \right) d_0 =
$$
$$
= \overline{B}_1^{(0)} \, \overline{B}_1^{(-1)} \ldots \overline{B}_1^{(m+1)} \, b_{31} \, B_1^{(m+1)} B_1^{(m+2)} \ldots B_1^{(0)},~~\mbox{for}~~m \leq -1.
$$

2) Take the relation
$$
 a_{13}^{c_{12}} = a_{13}^{c_{13} c_{31}^{-1}}.
$$
Conjugating it by $c_{21}^m$ we get
$$
d_{m}^{-1} a_{13}^{c_{22}^m} d_{m} = a_{13}^{c_{13} c_{22}^m c_{31}^{-1}}.
$$
Put instead $d_m$ its expressions we get:
$$
d_0^{-1} \left( \overline{A}_1^{(0)} \overline{A}_1^{(1)} \ldots \overline{A}_1^{(m)} \, a_{13}^{c_{22}^m} \, A_1^{(m)} A_1^{(m-1)} \ldots A_1^{(0)} \right) d_0 =
$$
$$
= A_2^{(0)} \, A_2^{(1)} \ldots A_2^{(m)} \, a_{13}^{c_{13} c_{22}^m c_{31}^{-1}} \, \overline{A}_2^{(m)} \overline{A}_2^{(m-1)} \ldots \overline{A}_2^{(0)},~~\mbox{for}~~m \geq 1,
$$
and
$$
d_0^{-1} \left( A_1^{(-1)} A_1^{(-2)} \ldots A_1^{(m)} \, a_{13}^{c_{22}^m} \, \overline{A}_1^{(m)} \overline{A}_1^{(m+1)} \ldots \overline{A}_1^{(-1)} \right) d_0 =
$$
$$
= \overline{A}_2^{(-1)} \, \overline{A}_2^{(-2)} \ldots \overline{A}_2^{(m)} \, a_{13}^{c_{13} c_{22}^m c_{31}^{-1}} \, A_2^{(m)} A_2^{(m+1)} \ldots A_2^{(-1)},~~\mbox{for}~~m \leq -1.
$$

Analogously, the relation
$$
b_{31}^{c_{12}} = b_{31}^{c_{13} c_{31}^{-1}}
$$
is equivalent to the relations
$$
d_0^{-1} \left( \overline{B}_2^{(1)} \overline{B}_2^{(2)} \ldots \overline{B}_2^{(m)} \, b_{13}^{c_{22}^m} \, B_2^{(m)} B_2^{(m-1)} \ldots B_2^{(1)} \right) d_0 =
$$
$$
= B_1^{(1)} \, B_1^{(2)} \ldots B_1^{(m)} \, b_{13}^{c_{13} c_{22}^m c_{31}^{-1}} \, \overline{B}_1^{(m)} \overline{B}_1^{(m-1)} \ldots \overline{B}_1^{(1)},~~\mbox{for}~~m \geq 1,
$$
and
$$
d_0^{-1} \left( B_2^{(0)} B_2^{(-1)} \ldots B_2^{(m+1)} \, b_{13}^{c_{22}^m} \, \overline{B}_2^{(m+1)} \overline{B}_2^{(m+2)} \ldots \overline{B}_2^{(0)} \right) d_0 =
$$
$$
= \overline{B}_1^{(0)} \, \overline{B}_1^{(-1)} \ldots \overline{B}_1^{(m+1)} \, b_{13}^{c_{13} c_{22}^m c_{31}^{-1}} \, B_1^{(m+1)} B_1^{(m+2)} \ldots B_1^{(0)},~~\mbox{for}~~m \leq -1.
$$

3) Conjugating the commutativity relations by $c_{21}^m$ we get
$$
[a_{31}, a_{22}^{c_{22}^{m} c_{31}^{-m}}] = [a_{31}, a_{13}^{c_{13}^{k} c_{22}^{m-k} c_{31}^{-m}}] = [a_{22}^{c_{22}^m c_{31}^{-m}}, a_{13}^{c_{13}^k c_{22}^{m-k} c_{31}^{-m}}] = 1,
$$
$$
[b_{31}, b_{22}^{c_{22}^{m} c_{31}^{-m}}] = [b_{31}, b_{13}^{c_{13}^{k} c_{22}^{m-k} c_{31}^{-m}}] = [b_{22}^{c_{22}^m c_{31}^{-m}}, b_{13}^{c_{13}^k c_{22}^{m-k} c_{31}^{-m}}] = 1.
$$

Now we will show that $Ker (\varphi)$ is an HNN-extension with base group $T_3$ and stable letter $d_0$. Introduce subgroups $A$ and $B$ in $Ker (\varphi)$. Subgroup $A$ is generated by elements:

for $m \geq 1$
$$
 \overline{A}_1^{(0)} \, \overline{A}_1^{(1)} \ldots \overline{A}_1^{(m)} \cdot  B_2^{(m)} \, B_2^{(m-1)} \ldots B_2^{(1)},
$$
$$
\overline{A}_1^{(0)} \overline{A}_1^{(1)} \ldots \overline{A}_1^{(m)} \, X_1 \, A_1^{(m)} A_1^{(m-1)} \ldots A_1^{(0)},~~\mbox{where}~X_1 \in \{ a_{31}, a_{31}^{c_{22}^m} \},
$$
$$
\overline{B}_2^{(1)} \overline{B}_2^{(2)} \ldots \overline{B}_2^{(m)} \, Y_1 \, B_2^{(m)} B_2^{(m-1)} \ldots B_2^{(1)},~~\mbox{where}~Y_1 \in \{ b_{31}, b_{31}^{c_{22}^m} \},
$$

for $m \leq -1$
$$
A_1^{(-1)} \, A_1^{(-2)} \ldots A_1^{(m)} \cdot \overline{ B}_2^{(m+1)} \, \overline{B}_2^{(m+2)} \ldots \overline{B}_2^{(0)},
$$
$$
 A_1^{(-1)} A_1^{(-2)} \ldots A_1^{(m)} \, X_1 \, \overline{A}_1^{(m)} \overline{A}_1^{(m+1)} \ldots \overline{A}_1^{(-1)},
$$
$$
B_2^{(0)} B_2^{(-1)} \ldots B_2^{(m+1)} \, Y_1 \, \overline{B}_2^{(m+1)} \overline{B}_2^{(m+2)} \ldots \overline{B}_2^{(0)}.
$$

Subgroup $B$ is generated by elements:

for $m \geq 1$
$$
\overline{A}_2^{(-1)} \, \overline{A}_2^{(-2)} \ldots \overline{A}_2^{(m)} \cdot \overline{B}_1^{(m+1)} \overline{B}_1^{(m+2)} \ldots \overline{B}_1^{(0)},
$$
$$
A_2^{(0)} \, A_2^{(1)} \ldots A_2^{(m)} \, X_2 \, \overline{A}_2^{(m)} \overline{A}_2^{(m-1)} \ldots \overline{A}_2^{(0)},~~\mbox{where}~X_2 \in \{ a_{31}, a_{13}^{c_{13} c_{22}^m c_{31}^{-1}} \},
$$
$$
B_1^{(1)} \, B_1^{(2)} \ldots B_1^{(m)} \, b_{13}^{c_{13} c_{22}^m c_{31}^{-1}} \, \overline{B}_1^{(m)} \overline{B}_1^{(m-1)} \ldots \overline{B}_1^{(1)},~~\mbox{where}~Y_2 \in \{ b_{31}, b_{13}^{c_{13} c_{22}^m c_{31}^{-1}} \},
$$

for $m \leq -1$
$$
\overline{A}_2^{(-1)} \, \overline{A}_2^{(-2)} \ldots \overline{A}_2^{(m)} \cdot \overline{B}_1^{(m+1)} \overline{B}_1^{(m+2)} \ldots \overline{B}_1^{(0)},
$$
$$
\overline{A}_2^{(-1)} \, \overline{A}_2^{(-2)} \ldots \overline{A}_2^{(m)} \, X_2 \, A_2^{(m)} A_2^{(m+1)} \ldots A_2^{(-1)},
$$
$$
\overline{B}_1^{(0)} \, \overline{B}_1^{(-1)} \ldots \overline{B}_1^{(m+1)} \, Y_2 \, B_1^{(m+1)} B_1^{(m+2)} \ldots B_1^{(0)}.
$$

The isomorphism $\psi : A \to B$ is defined  conjugation by $d_0$ and we see that all relations of $Ker(\varphi)$, exclude the commutativity relations from 3), define this conjugation. Hence,
$Ker(\varphi)$ is an HNN-extension with base group $T_3$, stable letter $d_0$ and assotiated subgroups $A$ and $B$:
$$
Ker(\varphi) = \langle T_3, d_0 ~|~rel(T_3),~~d_0^{-1} A d_0 = B, \psi \rangle.
$$
From the properties of HNN-extension follows that the set of defining relations $rel(T_3)$ is the set of commutativity relations from 3).

\end{proof}

\end{document}